\newtheorem{thm}{Theorem}[section]
\newtheorem*{thm*}{Theorem}
\newtheorem{lemma}[thm]{Lemma}
\newtheorem{corollary}[thm]{Corollary}
\newtheorem{prop}[thm]{Proposition}
\newtheorem*{prop*}{Proposition}
\numberwithin{equation}{section}
\title{Generalisation of the Hammersley-Clifford Theorem on Bipartite Graphs}
\author{
Nishant Chandgotia}
\address{
School of Mathematical Sciences\\ Tel Aviv University, Israel}
\email {nishant.chandgotia@gmail.com}
\subjclass[2010]{Primary 60K35; Secondary 82B20, 37B10}
\keywords{Markov random fields, Gibbs states, nearest neighbour interaction, folding, dismantlable, Hammersley-Clifford, symbolic dynamics}
\def\F{{\mathcal F}}
\def\A{{\mathcal A}}
\def\N{\mathbb N}
\newcommand{\Z}{\mathbb{Z}}
\def \B{\mathcal L}
\def \R{\mathbb R}
\def \G{\mathcal G}
\def \V{\mathcal V}
\def \E1{\mathcal E}
\def \M{\mathbf{M}}
\def \Gi{\mathbf{G}}
\def\p{\prime}
\def \H{\mathcal H}
\def \a{\alpha}
\def \b{\beta}
\def \l{\langle}
\def \r{\rangle}
\begin{document}
\maketitle
\begin{abstract} The Hammersley-Clifford theorem states that if the support of a Markov random field has a safe symbol then it is a Gibbs state with some nearest neighbour interaction. In this paper we generalise the theorem with an added condition that the underlying graph is bipartite. Taking inspiration from \cite{brightwell2000gibbs} we introduce a notion of folding for configuration spaces called strong config-folding proving that if all Markov random fields supported on $X$ are Gibbs with some nearest neighbour interaction so are Markov random fields supported on the `strong config-folds' and `strong config-unfolds' of $X$.
\end{abstract}
\section{Introduction}
A Markov random field (MRF) can be viewed as a collection of jointly distributed random variables indexed by the vertices of an undirected graph (denoted by $\G$) satisfying the conditional independence condition: the conditional distributions of the random variables on two finite separated sets are independent given the value of the random variables on the complement of their union. We are interested in determining conditions on the topological support of MRFs such that they are Gibbs states with some nearest neighbour interaction, that is, the distribution of the random variables on a finite set given their value on the outer boundary can be expressed as a normalised product of `weights' associated with patterns on complete subgraphs. The well-known Hammersley-Clifford theorem gives one such condition, a positivity assumption on the MRF given by the presence of a safe symbol in the support, also referred to as the vacuum state. We shall focus on the case where these random variables are finite-valued.

In this paper we view MRFs outside the boundary of safe symbols, folding in the notion of graph folding into our context.
Given a finite undirected graph $\H$ we say that a vertex $a$ can be folded into vertex $b$ if the neighbours of $b$ contain the neighbours of $a$. By removing $a$ from $\H$ we obtain a fold of the graph. A graph is called dismantlable if there is a sequence of folds which leads to a single vertex with or without a self-loop. These notions of folding and dismantlability were introduced by Nowakowski and Winkler in \cite{nowakowskiwinkler} as a characterisation of cop-win graphs.

The presence of folding in $\H$ endows the space of homomorphisms $Hom(\G,\H)$ with some useful properties. Indeed if $a$ can be folded into $b$ in $\H$ then the appearance of $a$ in a homomorphism from $\G$ to $\H$ can be replaced by $b$ to obtain another such homomorphism. However to say that the supports of MRFs are homomorphism spaces is a rather strong assumption. Therefore we abstract some of the properties satisfied by these spaces and introduce a notion of folding in closed configuration spaces $X\subset\A^{\V}$ called strong config-folding where $\A$ is a finite set of symbols corresponding to the vertices of $\H$ and $\V$ is the set of vertices of $\G$ which is assumed bipartite. $X$ is said to have a safe symbol $\star\in\A$ if for all $a\in \A$ and $x\in X$, $\star$ can replace the appearance of $a$ in $x$ to obtain another configuration in that space; in such a case all symbols $a\in \A$ can be strongly config-folded into $\star$. If $X= Hom(\G, \H)$ for some graph $\H$ and bipartite graph $\G$, $a$ can be strongly config-folded into $b$ in $X$ if and only if $a$ can be folded into $b$ in $\H$.

In \cite{brightwell2000gibbs} Brightwell and Winkler established many properties which are preserved under folding and unfolding of graphs. To this we add a `Hammersley-Clifford' property which we will describe next.

A specification is a consistent collection of probability distributions of patterns on finite sets given the pattern on their complement. Every MRF supported on a configuration space $X$ yields a specification on $X$ which is Markovian, that is, the distribution of patterns on the finite sets given the pattern on their complement depends solely on the pattern on their outer boundary. Similarly a Gibbs state with a nearest neighbour interaction on $X$ yields a specification on $X$ which is Gibbsian in nature. An MRF is a Gibbs state with some nearest neighbour interaction if and only if the corresponding Markov specification is a Gibbs specification with the same nearest neighbour interaction. Given a configuration space $X$, in Section 3 of \cite{chandgotiameyerovitch} following \cite{petersen_schmidt1997} we reparametrised specifications on $X$ to obtain Markov and Gibbs cocycles on $X$. The set of these cocycles have a natural vector space structure. Moreover the space of Gibbs cocycles with nearest neighbour interactions is contained in the space of Markov cocycles; the difference of their dimensions measures the extent to which $X$ satisfies the conclusion of the Hammersley-Clifford theorem. Thus the Hammersley-Clifford theorem can be restated in terms of cocycles- if $X$ has a safe symbol then the space of Markov cocycles and Gibbs cocycles with nearest neighbour interactions on $X$ are equal (Theorem \ref{Theorem:Strong_Hammersley-Clifford}).

We call a configuration space $X$ Hammersley-Clifford if the space of Markov cocycles equals the space of Gibbs cocycles with nearest neighbour interactions. Generalising Hammersley-Clifford theorem for bipartite graphs we prove in this paper that the strong config-folds and strong config-unfolds of Hammersley-Clifford spaces are Hammersley-Clifford (Theorems \ref{thm:main theorem} and \ref{thm:G-invariant main theorem}). Further we show that if a configuration space can be strongly config-folded into another then the quotient spaces of their Markov cocycles and Gibbs cocycles with nearest neighbour interactions are isomorphic (Theorem \ref{thm:extension of a Gibbs cocycle} and Corollary \ref{corollary:folds_quotient_isomorphic}). The proof is constructive; the corresponding ``weights" (interactions) can be obtained directly using our proof (Lemma \ref{lemma:construction_of_V^prime}). We also obtain versions of the theorem when the space of cocycles is invariant under a subgroup of automorphisms of the graph.

There has been some work regarding conditions under which the conclusion of the Hammersley-Clifford theorem holds and some examples where it does not: J. Moussouris provided examples of MRFs on a finite graph which are not Gibbs states with any nearest neighbour interaction \cite{mouss}. When the underlying graph is finite, there are algebraic conditions on the support \cite{sturmfel} where the conclusion of the Hammersley-Clifford theorem holds. In \cite{laur} Lauritzen proved that when the underlying graph is finite and decomposable then every MRF is a Gibbs state with some nearest neighbour interaction. This is essentially in contrast with our work; a bipartite graph is decomposable if and only if it is a forest. If the underlying graph is $\mathbb{Z}$ and the MRF is shift-invariant then the conclusion of the Hammersley-Clifford theorem holds without any further assumptions \cite{Markovfieldchain}. Furthermore, in that setting any MRF is a stationary Markov chain. When $\A$ is a general measure space, Theorems 10.25 and 10.35 in \cite{Georgii} provide certain mixing conditions which guarantee the conclusion as well.
Even when the underlying graph is $\mathbb{Z}$, this conclusion can fail for countable $\mathcal{A}$  \cite[Theorem 11.33]{Georgii}, or if we drop the assumption of shift-invariance \cite{dob}.
 When the underlying graph is $\mathbb{Z}^d$ and $d>1$, the conclusion fails even in the shift-invariant and finite alphabet case \cite[Chapter 5]{Chandgotia} and \cite[Section 9]{chandgotiameyerovitch}. Let $C_n$ denote the $n$-cycle. The space of Markov and Gibbs cocycles on $Hom(\Z^d, C_n)$ was analysed in \cite{chandgotiameyerovitch} for $d\geq 2$ and $n\neq 4$. Here we show that $Hom(\G, C_4)$ is Hammersley-Clifford for any bipartite graph $\G$. 

Section \ref{section:Background and Notation} begins with well-known notions related to this work e.g. MRFs and Gibbs States, invariance under group actions and the Hammersley-Clifford theorem. In Subsection \ref{subsection:Hammersley-Clifford} we take inspiration from symbolic dynamics to define n.n.constraint spaces. In Subsection \ref{subsection:cocycles} we introduce Markov and Gibbs cocycles and their relationship to the Hammersley-Clifford theorem. Section \ref{section:Hamm-cliff space} builds up the necessary background for this work. In Subsection \ref{subsection:Hamm-cliff spaces} we introduce Hammersley-Clifford spaces and in Subsection \ref{subsection:Markov-similar} we introduce Markov-similarity and $V$-good pairs. In Subsection \ref{subsection:Graphfolding} we introduce strong config-folding. Section \ref{section: main results} states and proves the main results of this paper. Since the proofs are technical we work out a concrete example of our results in Subsection \ref{subsection:concrete}. Finally we conclude with some further questions in Section \ref{section:conclude}.

A small note on the notation: The calligraphic letters $\G$ and $\H$ denote graphs; $\A$ denotes a finite set (the alphabet) and the letters in bold font $\M$, $\Gi$ represent the space of cocycles. $\V$ denotes the set of vertices of $\G$. Among the symbols in regular font $X$ represents a closed space of configuration. $Gr$ denotes a subgroup of automorphisms of the graph $\G$. $x, y, z$ denote configurations while $A$, $F$ denote subsets of $\V$ and $u$, $v$, $w$ the vertices of $\G$. $a, b, c$ elements of the alphabet $\A$. In most cases $a$ can be strongly config-folded into the symbol $b$ and $V$ is an interaction. The greek letters $\alpha, \b$ denote patterns.
\section{Background and Notation} \label{section:Background and Notation}
\subsection{MRFs}
\large
By a graph $\G=(\V, \E1)$ we mean a countable locally finite undirected graph without any self-loops or multiple edges. The adjacency relation is denoted by $\sim_\G$. Given any set $F\subset \V$ the \emph{boundary of the set $F$} is defined as the set of vertices outside the set $F$ which are adjacent to $F$, that is,
$$\partial F:=\{u\in \V\setminus F\:|\:\text{ there exists }v\in F \text{ s.t. } u \sim_\G v\}.$$
Sometimes this is also called the external vertex boundary of the set $F$.

Given a finite set $\A$, $\A^\V$ is a compact topological space under the product topology. For any finite set $F\subset \V$ and $\alpha\in \A^F$ we denote by $[\alpha]_F$ the cylinder set
$$[\alpha]_F:=\{x\in \A^\V\:|\:x|_F=\alpha\}.$$
Similarly given $x\in \A^\V$ and $F\subset \V$, $[x]_F$ denotes the cylinder set $[x|_F]_F$.
The collection of cylinder sets generate the Borel $\sigma$-algebra on $\A^\V$. The set $\A$ will be referred to as an \emph{alphabet} with finitely many \emph{symbols} which when placed on vertices of the graph $\G$ yield \emph{configurations}, that is, elements of $\A^\V$ and \emph{patterns}, that is, elements of $\A^F$ for some set $F\subset \V$.

An \emph{MRF} is a Borel probability measure $\mu$ on $\A^\V$ with the property that for all finite sets $A, B\subset \V$ such that $\partial A\subset B\subset A^c$ and $\alpha\in \A^A, \beta\in \A^B$ satisfying $\mu([\beta]_B)>0$
$$\mu([\alpha]_A\:|\:[\beta]_B)= \mu([\alpha]_A\:|\:[\beta]_{\partial A}).$$
An equivalent definition is the following: If $x$ is a configuration chosen randomly according to the measure $\mu$, and $A, B\subset\V$ are finite \emph{separated }sets in $\G$ (meaning that $A$ and $B$ are disjoint and $u\nsim_\G v$ for all $u\in A$ and $v\in B$), then conditioned on $x|_{\V\setminus (A\cup B)}$, $x|_{A}$ and $x|_B$ are independent random variables. Here we restrict our attention to boundaries of thickness $1$. In general thicker boundaries can also be considered for similar notions.

A stronger notion of an MRF obtained by requiring this conditional independence for all sets $A, B\subset \V$ which are separated in $\G$ (finite or not) is called a \emph{global MRF}. This paper is concerned with the former notion of independence, where both $A$ and $B$ are assumed to be finite.

\subsection{Gibbs States with Nearest Neighbour Interactions}
Let $d_\G$ denote the graph distance on $\G$. Given any finite set $A\subset \V$ let $diam(A)$ denote the \emph{diameter of the set $A$} defined by
$$diam(A):= \max_{u, v\in A}d_{\G}(u,v).$$
Given a closed configuration space $X\subset \A^\V$ and $F\subset \V$, denote by $\B_F(X)$ \emph{the language of $X$ on $F$} defined as the set of allowed patterns on $F$, that is,
$$\B_F(X):=\{\alpha\in \A^F\:|\: \text{ there exists }x\in X\text{ s.t. }x|_{F}=\alpha\}.$$
Note that $\B_\V(X)=X$. Denote by $\B(X)$ \emph{the language of $X$} defined as the set of all allowed patterns on finite sets, that is,
$$\B(X):= \bigcup_{F\subset \V\text{ finite}}\B_F(X).$$
From the following lemma we see that the language completely describes the configuration space. 
\begin{prop}\label{Proposition:language gives all}
Let $\A$ be a finite set, $\G=(\V, \E1)$ be a graph and $X, Y\subset \A^{\V}$ be closed sets. Then $X\subset Y$ if and only if $\B_A(X)\subset \B_A(Y)$ for all $A\subset \V$ finite.
\end{prop}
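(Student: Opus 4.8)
The forward implication is immediate and I would dispose of it first: if $X\subset Y$ and $a\in\B_A(X)$ for some finite $A\subset\V$, then by definition $a=x|_A$ for some $x\in X\subset Y$, so $a\in\B_A(Y)$.

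For the converse, assume $\B_A(X)\subset\B_A(Y)$ for every finite $A\subset\V$, and fix an arbitrary $x\in X$; the goal is to show $x\in Y$. Since $\A^\V$ carries the product topology, the cylinder sets $[x]_A$ with $A\subset\V$ finite form a neighbourhood basis at $x$. For each such $A$ we have $x|_A\in\B_A(X)\subset\B_A(Y)$, so there is a point $y\in Y$ with $y|_A=x|_A$, i.e. $Y\cap[x]_A\neq\emptyset$. Thus every basic neighbourhood of $x$ meets $Y$, so $x$ lies in the closure of $Y$; as $Y$ is closed, $x\in Y$. Since $x\in X$ was arbitrary, $X\subset Y$.

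Alternatively, and perhaps more cleanly, I would phrase the converse as a finite‑intersection‑property argument: the family $\{\,Y\cap[x]_A : A\subset\V \text{ finite}\,\}$ consists of closed subsets of the compact space $\A^\V$, and it has the finite intersection property because $Y\cap[x]_{A_1\cup\dots\cup A_n}\subset Y\cap[x]_{A_1}\cap\dots\cap Y\cap[x]_{A_n}$ and each such set is nonempty by the previous paragraph. Hence the total intersection $\bigcap_{A}\bigl(Y\cap[x]_A\bigr)$ is nonempty; but any point in this intersection agrees with $x$ on every finite set, hence equals $x$, and lies in $Y$, so $x\in Y$.

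There is no real obstacle here: the only point requiring a little care is the observation that finite unions keep us inside the class of finite sets, which is exactly what makes either the closure argument or the compactness argument go through; the hypothesis is used precisely at the level of finite windows, and closedness of $Y$ (equivalently, compactness of $\A^\V$) upgrades this to the global containment $X\subset Y$.
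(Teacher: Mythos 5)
Your proof is correct and is essentially the argument the paper intends: the forward direction is definitional, and the converse is exactly the closure argument that the paper compresses into the remark that it ``stands true because $X$ and $Y$ are closed'' (with, as you rightly note, only the closedness of $Y$ actually being used). The alternative compactness phrasing is a harmless repackaging of the same idea, so there is nothing further to add.
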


By definition if $X\subset Y$ then $\B_A(X)\subset \B_A(Y)$ for all finite sets $A\subset \V$. The converse is true because $X$ and $Y$ are closed.

A closed configuration space relevant to us is the \emph{support} of a probability measure $\mu$ denoted by $supp(\mu)$ and defined as the intersection of all closed sets $X\subset \A^\V$ with full measure.

An interaction on $X$ is a real-valued function on the language, $V: \B(X)\longrightarrow \R$ satisfying certain summability conditions. A \emph{nearest neighbour interaction} is an interaction $V$ on $X$ such that it is supported on patterns on cliques (complete subgraphs) of $\G$, that is, $V(\alpha)=0$ for all patterns $\a\in \B_F(X)$ where $diam(F)>1$. If the underlying graph $\G$ is bipartite then a nearest neighbour interaction is an interaction supported on patterns on edges and vertices. We will denote by $\l a,b\r_{\{v,w\}}$ the pattern in $\A^{\{v,w\}}$ given by
$${\l a,b\r_{\{v,w\}}}(v):=a\text{ and }{\l a,b\r_{\{v,w\}}}(w):=b$$
and by $\l a\r_v$ the pattern in $\A^{\{v\}}$ given by $\l a\r_v(v):=a$.

A\emph{ Gibbs state with a nearest neighbour interaction $V$} is an MRF $\mu$ such that for all $x\in supp(\mu)$ and $A, B\subset \V$ finite satisfying $\partial A\subset B\subset A^c$
$$\mu([x]_A\:|\:[x]_B):= \frac{\prod_{C\subset A\cup \partial A}e^{V(x|_C)}}{Z_{A, x|_{\partial A}}}$$
where $Z_{A, x|_{\partial A}}$ is the uniquely determined normalising factor dependent upon $A$ and $x|_{\partial A}$ so that $\mu(X)=1$.

Note that Gibbs states with nearest neighbour interactions and MRFs can be distinguished by conditional distributions mentioned in the equation above. In Subsection \ref{subsection:cocycles} we will consider a parameterisation of the space of conditional probability distributions to formally study the distinction at that level. Also note that by this definition of Gibbs states the constraints on the support are extrinsic; there is an intrinsic way of constraining the support by allowing the interactions to be infinite. This leads to a different notion of Gibbs states which we will not pursue.

This paper is concerned with conditions on the support of MRFs, which imply that they are Gibbs with some nearest neighbour interaction.

\subsection{Invariant Spaces, Measures and Interactions}
An \emph{automorphism} of the graph $\G$ is a bijection on the vertex set $g:\V\longrightarrow \V$ which preserves the adjacencies, that is, $u\sim_\G v$ if and only if $gu\sim_\G gv$. Let the group of all automorphisms of the graph $\G$ be denoted by $Aut(\G)$.

There is a natural action of $Aut(\G)$ on patterns and configurations: given $\a\in \A^F$, $x\in \A^\V$ and $g\in Aut(\G)$ we have $g\a\in \A^{gF}$ and $gx\in \A^\V$ given by
$$(g\a)_{gv}:= \a_v\text{ and}$$
$$(gx)_v:= x_{g^{-1}v}.$$
This induces an action on measures on the space $\A^\V$ given by
$$(g\mu)(L):=\mu(g^{-1}L)$$
for all measurable sets $L\subset \A^\V$.

For a given subgroup $Gr\subset Aut(\G)$, a set of configurations $X\subset \A^\V$ is said to be \emph{$Gr$-invariant} if $gX=X$ for all automorphisms $g\in Gr$. Similarly a measure $\mu$ on $\A^\V$ is said to be $Gr$-invariant if $g\mu= \mu$ for all $g\in Gr$. Note, for any subgroup $Gr\subset Aut(\G)$, if $\mu$ is a $Gr$-invariant probability measure then $supp(\mu)$ is also a $Gr$-invariant configuration space.
If $\G=\Z$ and $Gr$ is the group of translations of $\Z$, then $Gr$-invariant closed spaces of configurations in $\A^\Z$ are precisely the shift spaces \cite[Theorem 6.1.21]{LM} and $Gr$-invariant probability measures correspond to stationary stochastic processes on the $\Z$ lattice.

Let $X\subset \A^\V$ be a closed configuration space invariant under a subgroup $Gr\subset Aut(\G)$. Then $Gr$ acts on the interactions on $X$: Given an interaction $V$ on $X$ for all $\a\in \A^F$ and $g\in Gr$
$$gV(\a):= V(g^{-1}\a).$$

\subsection{Hammersley-Clifford Theorem and the Support of MRFs} \label{subsection:Hammersley-Clifford}As in \cite{Markovfieldchain,chandgotiameyerovitch}, closed subsets $X\subset \A^\V$ will be called \emph{topological Markov fields} if for all $x,y\in X$ and finite $F\subset \V$ satisfying $x|_{\partial F}= y|_{\partial F}$ there exists $z\in X$ such that
$$z_v=\begin{cases}
x_v&\text{ if }v\in F\\
y_v&\text{ if }v\in \V\setminus F.
\end{cases}•$$
The support of every MRF is a topological Markov field. If the underlying graph is finite then further $X\subset \A^\V$ is the support of an MRF if and only if it is a topological Markov field. If $Gr$ is the group of translations of the $\Z$ lattice then $X\subset \A^\Z$ is the support of a $Gr$-invariant MRF if and only it is a non-wandering (a certain irreducibility condition) $Gr$-invariant n.n.constraint space (also known as nearest neighbour shifts of finite type). However in general characterising the support of an MRF seems to be a hard question. This is not even known in case the graph is $\Z^2$ for MRFs invariant under translations \cite[Section 10]{chandgotiameyerovitch}.

A closed configuration space $X\subset \A^\V$ is said to have a safe symbol $\star$ if for all $A\subset \V$ and $x\in X$ we can `legally' replace the symbols on $A$ by $\star$, that is, there exists $y\in X$ satisfying
\begin{equation*}
y_{v}:=\begin{cases}
x_v\text{ if }v\in A\\
\star\:\:\text{ if }v\in A^c.
\end{cases}•
\end{equation*}•
We will now state the Hammersley-Clifford theorem.

\begin{thm}[Hammersley-Clifford Theorem, weak version \cite{HammersleyClifford71,Preston,Chandgotia}]\label{Theorem:Weak_Hammersley-Clifford}
Let $\G=(\V, \E1)$ be a graph, $X$ be a configuration space with a safe symbol on $\G$ and $\mu$ be an MRF such that $supp(\mu)=X$. Then
\begin{enumerate}
\item The measure $\mu$ is Gibbs for some nearest neighbour interaction.
\item If $\mu$ is a $Gr$-invariant MRF for some subgroup $Gr\subset Aut(\G)$ then $\mu$ is a Gibbs state with some $Gr$-invariant nearest neighbour interaction.
\end{enumerate}
\end{thm}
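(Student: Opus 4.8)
The plan is to run the standard M\"obius-inversion (inclusion--exclusion) proof of Hammersley--Clifford with the safe symbol $\star$ playing the role of the ground state, and then to observe that the construction is canonical enough to survive the passage to automorphisms in part (2). First I would extract the positivity that the hypothesis buys. Since $X=supp(\mu)$, a cylinder $[y]_F$ with $y\in X$ and $F$ finite has positive mass, and in fact a cylinder has positive mass precisely when it meets $X$; combined with safeness of $\star$ --- for every $y\in X$ and every $A\subset\V$ the configuration equal to $y$ on $A$ and to $\star$ on $A^c$ lies in $X$ --- this guarantees that all single-site conditional probabilities $\mu\big([b]_v\,\big|\,[\eta]_{\partial v}\big)$ that will appear below, where $b\in\{y_v,\star\}$ for some $y\in X$ and $\eta$ is a boundary pattern of the form ``$y$ on some $B\subseteq\partial v$, $\star$ on $\partial v\setminus B$'', are strictly positive, so their logarithms are finite real numbers. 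This is the only place the hypothesis is used.

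Next I would build the interaction. Fix $y\in X$ and a finite window $F\supseteq A\cup\partial A$; for $B\subseteq F$ let $y^B$ be the pattern on $F$ agreeing with $y$ on $B$ and equal to $\star$ on $F\setminus B$, put
\[
Q_F(y^B)=\log\mu\big([y^B]_F\,\big|\,[\star]_{\partial F}\big)-\log\mu\big([\star]_F\,\big|\,[\star]_{\partial F}\big),
\]
and define the candidate potential $\Phi_A(y)=\sum_{B\subseteq A}(-1)^{|A\setminus B|}Q_F(y^B)$ for $A\subseteq F$. I would then verify the usual list of properties: $\Phi_A(y)$ depends only on $y|_A$; it vanishes whenever $y_v=\star$ for some $v\in A$ (pair $B$ with $B\cup\{v\}$); for a \emph{clique} $C$ it is intrinsic, i.e.\ independent of the window $F$, because pairing off one vertex collapses $\Phi_C(y)$ to an alternating sum of log-ratios of single-site conditionals which, since $C\setminus\{v\}\subseteq\partial v$ for a clique, involve only the specification at that vertex on a boundary pattern of the type above; and --- the heart of the matter --- $\Phi_A\equiv 0$ whenever $A$ is \emph{not} a clique. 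For the last point I would pick $u\nsim v$ in $A$, group the sum over $B\subseteq A\setminus\{u,v\}$ into alternating quadruples $\{B,\,B\cup\{u\},\,B\cup\{v\},\,B\cup\{u,v\}\}$, and observe that each such quadruple is the difference of two single-site conditional probabilities at $u$ taken over boundary patterns that agree on $\partial u$ (the value at $v$ is irrelevant because $v\notin\partial u$), hence vanishes.

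It then remains to set $V([y]_C)=\Phi_C(y)$ for cliques $C$ and $V=0$ on larger patterns --- so $V$ is a nearest neighbour interaction, supported on vertices and edges in the bipartite case --- and to reconstruct $\mu$. M\"obius inversion gives $Q_F(y)=\sum_{A\subseteq F}\Phi_A(y)$; conditioning a single coordinate $y_v$ cancels every $A$ with $v\notin A$ from numerator and denominator and, using the non-clique vanishing, leaves the one-site formula $\mu\big([y]_v\,\big|\,[y]_{\partial v}\big)=Z_v^{-1}\exp\!\big(\sum_{C\ni v}V([y]_C)\big)$ with $Z_v$ the normalisation over the allowed symbols at $v$. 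Fixing a finite $A$ with $\partial A\subseteq B\subseteq A^c$ and $y\in X$, one has $\mu([y]_A\mid[y]_B)=\mu([y]_A\mid[y]_{\partial A})$ by the Markov property, and to match this with $\prod_{C\subseteq A\cup\partial A}e^{V([y]_C)}/Z_{A,y|_{\partial A}}$ it suffices, after dividing out the configuration-independent normalisation, to compare for two patterns $z,z'$ on $A$ both compatible with $y|_{\partial A}$ the ratio of the two sides; that ratio telescopes along a path of single-site changes joining $z$ to $z'$, every intermediate configuration staying inside $X$ by the safe symbol, and every step contributing exactly the ratio of the relevant clique weights by the one-site formula. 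This proves (1). For (2), note that the whole construction commutes with $Aut(\G)$: one has $g\partial F=\partial(gF)$ and $g[\star]_S=[\star]_{gS}$ because $\star$ is a fixed element of $\A$, so $g\mu=\mu$ forces $\Phi_{gC}(gy)=\Phi_C(y)$ for all cliques $C$ and all $y\in X$, i.e.\ $V$ is $G$-invariant, and the same $V$ witnesses the conclusion.

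I expect the main difficulty to be organisational rather than conceptual: pinning down that the potentials $\Phi_C$ are genuinely intrinsic (independent of the enveloping window and depending only on the pattern on $C$) in the infinite-graph, globally-non-positive-measure setting, and carefully running the single-site telescoping that converts the one-site formula into the finite-volume Gibbs expression while keeping every intermediate configuration inside $X$. The non-clique vanishing, though it is the conceptual core of Hammersley--Clifford, is driven entirely by the $1$-thickness Markov property and needs no new idea.
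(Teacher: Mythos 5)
Your proposal is correct, and it is essentially the classical vacuum-state/M\"obius-inversion argument that the paper itself does not reproduce but simply cites (Hammersley--Clifford, Preston, Chandgotia): the safe symbol $\star$ plays the role of the vacuum, positivity of the relevant conditional probabilities comes from $X=supp(\mu)$ plus safeness, the non-clique vanishing comes from the one-step Markov property exactly as you group the quadruples, and equivariance of the construction under $Aut(\G)$ gives the $G$-invariant interaction in part (2). The only points needing care are the ones you already flag --- window-independence of $\Phi_C$ for cliques and the single-site telescoping through the configuration equal to $\star$ on $A$ (using that the support is a topological Markov field so the two patterns being compared may be taken to agree off $A$) --- and both go through; note also that the paper's own folding machinery would yield this theorem only for bipartite $\G$, whereas your argument, like the cited ones, works for arbitrary graphs.
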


This theorem led to the sparkling of our interest in the field: the study of conditions on the support of MRFs which imply that they are Gibbs. We will now prove that the support of the measures mentioned in Theorem \ref{Theorem:Weak_Hammersley-Clifford} have some ``combinatorial structure".

The following definitions take inspiration from symbolic dynamics (\cite{LM}). Let $\F$ be a given set of patterns on finite sets. Then the configuration space with constraints $\F$ is defined to be
\begin{eqnarray*}
X_\F:=\{x\in \A^\V\:|\text{ patterns from }\F\text{ do not appear in }x\}.
\end{eqnarray*}•
A set of constraints $\F$ is called \emph{nearest neighbour} if $\F$ consists of patterns on cliques, that is, for all $\alpha\in \F\cap\A^F$, $diam(F)\leq 1$.

A \emph{n.n.constraint space} is a configuration space with nearest neighbour constraints. Note that if $\G$ is bipartite then $\F$ consists of patterns on edges and vertices. These spaces correspond to nearest neighbour shifts of finite type which are replete in the sphere of symbolic dynamics.

\noindent\textbf{Examples: }
\begin{enumerate}
\item\emph{(The hard core model)} Here the alphabet $\A:=\{0,1\}$ and the constraint set is given by
$$\F:=\{\l 1,1\r_{\{u,v\}}\:|\:u\sim_\G v\}.$$
This constrains the configurations so that symbols on adjacent vertices cannot both be $1$.
\item\emph{(The space of $3$-colourings)} Here the alphabet $\A:=\{0,1,2\}$ and the constraint set is given by
$$\F:=\{\l a,a\r_{\{v,w\}}\:|\: v\sim_\G w \text{ and }a\in \{0,1,2\}\}.$$
This constrains the configurations so that symbols on adjacent vertices are distinct.
\end{enumerate}

Note that the n.n.constraint spaces given above are of a very special class, namely the constraints on all edges of the graph $\G$ are the same. These configuration spaces correspond to homomorphism spaces defined as the following: Given an undirected graph $\H=(\V_\H, \E1_\H)$ without multiple edges \emph{a homomorphism} from $\G$ to $\H$ is a map $x:\V\longrightarrow \V_\H$ such that for all $v\sim_\G w$, $x_v\sim_\H x_w$. \emph{The space of all homomorphisms }from $\G$ to $\H$ is denoted by $Hom(\G, \H)$. For instance the hard core model is the space $Hom(\G, \H)$ where $\H$ is given by Figure \ref{figure:hardsquaregraph}
\begin{figure}[h]
\includegraphics[angle=0,
width=.2\textwidth]{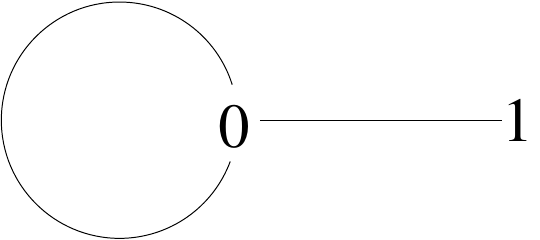}
\caption{Graph for the Hard Core Model \label{figure:hardsquaregraph}}
\end{figure}
and the space of $3$-colourings is $Hom(\G, C_3)$ where $C_3$ is the $3$-cycle with vertices $0$, $1$ and $2$. Also note that the hard core model has a safe symbol $0$ but the space of $3$-colourings does not have any safe symbol.
Given graphs $\G$ and $\H$, $Hom(\G,\H)$ is an n.n.constraint space where the constraint is given by 
$$\F:=\{\l a,b\r_{\{v,w\}}\:|\: a\nsim_\H b\in \V_\H\text{ and }v\sim_\G w\}.$$

\noindent Then for all $x\in X_\F$ and vertices $v\sim_\G w$, $x_v\sim_\H x_w$ which implies $x\in Hom(\G, \H)$. Conversely for all homomorphisms $x\in Hom(\G,\H)$ and vertices $v\sim_\G w$ we have $x|_{\{v,w\}}\notin \F$ and hence $x\in X_\F$.

N.N.Constraint spaces arise naturally in the study of MRFs as is shown in the following propositions.
\begin{prop}\label{Proposition: nearest neighbour is TMF}
Every n.n.constraint space is also a topological Markov field.
\end{prop}
\begin{proof}
Let $\G=(\V, \E1)$ be a graph, $\A$ be a finite set and $X\subset \A^\V$ be an n.n.constraint space on $\G$. Consider $A\subset \V$ finite and $x, y\in X$ such that $x|_{\partial A}= y|_{\partial A}$. We want to prove that $z\in \A^\V$ defined by
\begin{equation*}
z_v:=
\begin{cases}
x_v&\text{ if }v\in A\cup \partial A\\
y_v&\text{ if }v\in A^c
\end{cases}
\end{equation*}•
is an element of $X$. Let $B\subset \V$ be a clique. If $B\cap A\neq \emptyset$ then $B\subset A\cup \partial A$ and $z|_{B}= x|_B\in \B_B(X)$ else $B\cap A= \emptyset$ implying $z|_{B}= y|_B\in \B_B(X)$. Since $X$ is an n.n.constraint space $z\in X$.
\end{proof}
The following proposition gives a partial converse.
\begin{prop}\label{Proposition: TMF is nearest neighbour}
Every topological Markov field with a safe symbol is also an n.n.constraint space.
\end{prop}
\noindent\textbf{Remark:} If $\mu$ is an MRF then $supp(\mu)$ is a topological Markov field. Thus this proposition implies that if a measure $\mu$ satisfies the hypothesis of the weak Hammersley-Clifford theorem (Theorem \ref{Theorem:Weak_Hammersley-Clifford}), that is, if $\mu$ is an MRF such that $supp(\mu)$ has a safe symbol then $supp(\mu)$ is an n.n.constraint space. The conclusion of this proposition does not hold without assuming presence of a safe symbol (comments following proof of Proposition 3.5 in \cite{Markovfieldchain}).
\begin{proof}
Let $\A$ be a finite set, $\G=(\V, \E1)$ be a given graph and $X\subset \A^\V$ be a topological Markov field on the graph $\G$ with a safe symbol. Let $\star$ be a safe symbol for $X$. Consider the set
\begin{eqnarray*}
\F:=\{\a\in \A^A\:|\: A\subset \V \text{ forms a clique and there does not exist }x\in X \text{ such that } x|_A=\a\}.
\end{eqnarray*}•
Note that $ X\subset X_\F$ and if $A\subset \V$ is a clique then $\B_A(X_\F)= \B_A(X)$. We want to prove that $X_\F\subset X$. We will proceed by induction on $n\in \N$, the hypothesis being: For all $A\subset \V$ such that $|A|=n$, $\B_A(X_\F)\subset \B_A(X)$.

The base case follows immediately. Suppose for some $n\in \N$, for all $A\subset \V$ satisfying $|A|\leq n$, $\B_A(X_\F)\subset \B_A(X)$.

For the induction step consider $A\subset \V$ such that $|A|=n+1$. There are two cases to consider: If $A$ is a clique then $\B_A(X_\F)=\B_A(X)$. If $A$ is not a clique then there exists $v\in A$ such that $|\partial\{v\}\cap A|<n$. Let $\a\in \B_A(X_\F)$. We will prove that $\a\in \B_A(X)$. Now $|\left(\{v\}\cup \partial\{v\}\right)\cap A|, |A\setminus \{v\}|\leq n$, thus the induction hypothesis implies
$$\a|_{(\{v\}\cup \partial\{v\})\cap A}\in \B_{(\{v\}\cup \partial\{v\})\cap A}(X)$$
and
$$\a|_{A\setminus \{v\}}\in \B_{A\setminus \{v\}}(X).$$
Consider $x,y\in X$ such that
$$x|_{(\{v\}\cup \partial\{v\})\cap A}= \a|_{(\{v\}\cup \partial\{v\})\cap A}$$
and
$$y|_{A\setminus \{v\}}= \a|_{A\setminus \{v\}}.$$
Since $\star$ is a safe symbol for $X$ therefore
$x^\star, y^\star\in \A^\V$ given by
$$x^\star_w:=\begin{cases}
x_w&\text{if }w\in (\{v\}\cup \partial\{v\})\cap A\\
\star&\text{otherwise}
\end{cases}•$$
and
$$y^\star_w:=\begin{cases}
y_w&\text{ if }w\in A\setminus\{v\}\\
\star&\text{otherwise}
\end{cases}•$$
are configurations in $X$. Note that $x^\star_w=x_w= \a_w$, $y^\star_w =y_w= \a_w$ if $w\in \partial\{v\}\cap A$ and $x^\star_w= y^\star_w= \star$ if $w\in A^c$. Therefore $x^\star|_{ \partial\{v\}}= y^\star|_{ \partial\{v\}}$. Since $X$ is a topological Markov field, $z\in \A^\V$ defined by
\begin{equation*}
z_w:=
\begin{cases}
x^\star_w&\text{ if }w\in \{v\}\cup \partial \{v\}\\
y^\star_w&\text{otherwise}
\end{cases}•
\end{equation*}•
is an element of $X$. But $z_v= x^\star_v= x_v= \a_v$ and $z_w= y^\star_w= y_w= \a_w$ if $w\in A\setminus \{v\}$. Hence $z|_A=\a\in \B_A(X)$. This completes the induction. By Proposition \ref{Proposition:language gives all} $X_{\F}\subset X$. Hence $X= X_\F$.
\end{proof}

N.N.Constraint spaces allow us to change configurations one site at a time provided the edge-constraints are satisfied. To state this rigorously we define the following: given $x\in \A^\V$, and distinct vertices $w_1, w_2, \ldots, w_r \in\V$ and $c_1, c_2, \ldots, c_r\in \A$ we denote by $\theta^{w_1, w_2,\ldots, w_r}_{c_1, c_2,\ldots,c_r}(x)$ an element of $\A^\V$ given by
\begin{equation*}
(\theta^{w_1,w_2, \ldots, w_r}_{c_1, c_2,\ldots, c_r}(x))_u:=
\begin{cases}
x_u&\text{ if }u\neq w_1, w_2\ldots, w_r\\
c_i&\text{ if }u=w_i\text{ for some }1\leq i \leq r.
\end{cases}
\end{equation*}•

\begin{prop}\label{prop:changing_remain_nnconstraint}
Let $\A$ be a finite set, $\G=(\V, \E1)$ be a bipartite graph, $X\subset \A^\V$ be an n.n.constraint space on the graph $\G$ and $x\in X$. Let $w_1, w_2, \ldots, w_r \in\V$ be distinct vertices such that $w_i \nsim_\G w_j$ for $1\leq i,j \leq r$ and $c_1, c_2, \ldots, c_r\in \A$ such that $\l c_i, x_{w^\prime}\r_{\{w_i, w^\prime\}}\in \B_{\{w_i, w^\prime\}}(X)$ for all $w^\prime\sim_\G w_i$ and $1\leq i \leq r$. Then $\theta^{w_1, w_2, \ldots, w_r}_{c_1, c_2\ldots, c_r}(x)\in X$.
\end{prop}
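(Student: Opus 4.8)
The plan is to reduce everything to the definition of an n.n.constraint space. Since $X$ is an n.n.constraint space, write $X = X_\F$ for a nearest neighbour constraint set $\F$; because $\G$ is bipartite, every clique of $\G$ has at most two vertices, so $\F$ consists only of forbidden patterns on single vertices and on edges. Set $y := \theta^{w_1,\ldots,w_r}_{c_1,\ldots,c_r}(x)$. To conclude $y\in X=X_\F$ it then suffices to check that, for every vertex $v$ and every edge $\{u,v\}$ of $\G$, the pattern $y$ induces there does not belong to $\F$.

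A preliminary observation I would record is that if $F$ is a clique and $a\in\B_F(X)$, then $a\notin\F$: any witness $z\in X=X_\F$ with $z|_F=a$ contains the pattern $a$, while no pattern of $\F$ occurs in $z$. I would also note that the hypothesis $[c_i,x_{w'}]_{\{w_i,w'\}}\in\B_{\{w_i,w'\}}(X)$ for every $w'\sim w_i$ yields, by projecting a witness onto $\{w_i\}$, that $[c_i]_{\{w_i\}}\in\B_{\{w_i\}}(X)$ as well (tacitly using that $w_i$ has at least one neighbour; otherwise one simply assumes $[c_i]_{\{w_i\}}\in\B_{\{w_i\}}(X)$).

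Now I would go clique by clique. For a vertex $v\notin\{w_1,\dots,w_r\}$ we have $y|_{\{v\}}=x|_{\{v\}}\in\B_{\{v\}}(X)$, while for $v=w_i$ we have $y|_{\{v\}}=[c_i]_{\{w_i\}}\in\B_{\{w_i\}}(X)$ by the observation above; in both cases $y|_{\{v\}}\notin\F$. For an edge $\{u,v\}$ there are three cases. If neither endpoint is among the $w_i$, then $y|_{\{u,v\}}=x|_{\{u,v\}}\in\B_{\{u,v\}}(X)$. If exactly one endpoint is changed, say $u=w_i$, then $y|_{\{u,v\}}=[c_i,x_v]_{\{w_i,v\}}$, which lies in $\B_{\{w_i,v\}}(X)$ precisely because $v\sim w_i$ and the hypothesis applies to the neighbour $v$. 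The last case, both endpoints changed with $u=w_i$, $v=w_j$, $i\neq j$, cannot occur: it would force $w_i\sim w_j$, contradicting $w_i\nsim w_j$. Hence in every surviving case $y|_{\{u,v\}}\notin\F$, and since $\F$ lives only on vertices and edges, no pattern of $\F$ appears in $y$, so $y\in X_\F=X$.

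There is no serious obstacle; the content is entirely in making sure the hypothesis is exactly strong enough. The two points to get right are that bipartiteness caps clique size at two, so that checking edge-constraints suffices (for a general graph one would also need control of triangle patterns $[c_i,x_u,x_v]$, which the hypothesis does not provide), and that the pairwise non-adjacency of $w_1,\dots,w_r$ eliminates the single edge configuration — both endpoints modified — that the hypothesis would otherwise fail to control. This mirrors the clique-by-clique verification already used in the proof of Proposition \ref{Proposition: TMF is nearest neighbour}.
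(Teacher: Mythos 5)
Your proof is correct and takes essentially the same route as the paper: a clique-by-clique check in which bipartiteness reduces the constraints to vertices and edges, the pairwise non-adjacency of $w_1,\ldots,w_r$ rules out an edge with both endpoints modified, and the hypothesis handles the edges with exactly one modified endpoint. Your explicit treatment of single-vertex patterns (and the isolated-vertex caveat) is a minor refinement of what the paper leaves implicit when it checks only edge patterns.
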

Specialising to $r=1$, if $X\subset \A^\V$ is an n.n.constraint space and $x\in X$ then for $v\in \V$ and $c\in \A$, $\theta^v_c(x)\in X$ if and only if $\l x_w,c\r_{\{w,v\}}\in \B_{\{w,v\}}(X)$ for all $w\sim_\G v$.
\begin{proof}
The constraint set for $X$ consists only of patterns on edges and vertices. Thus it is sufficient to check for all $v\sim_\G w$ that
$$\theta^{w_1, w_2, \ldots, w_r}_{c_1, c_2\ldots, c_r}(x)\vert_{\{v,w\}}\in \B_{\{v,w\}}(X).$$

Since $w_i\nsim_\G w_j$ for all $1\leq i,j\leq r$ at most one among $v$ and $w$ is $w_i$ for some $1\leq i\leq r$. If both of them are not equal to $w_i$ then
$$\theta^{w_1, w_2, \ldots, w_r}_{c_1, c_2\ldots, c_r}(x)|_{\{v,w\}}=x|_{\{v,w\}}\in\B_{\{v,w\}}(X). $$
Otherwise we may assume $v=w_i$ for some $1\leq i \leq r$ giving us
$$\theta^{w_1, w_2, \ldots, w_r}_{c_1, c_2\ldots, c_r}(x)|_{\{v,w\}}=\l c_i,\ x_w\r_{\{w_i,w\}}\in\B_{\{w_i,w\}}(X). $$
\end{proof}

\subsection{The Asymptotic Relation, Cocycles and the Strong Version of the Hammersley-Clifford Theorem} \label{subsection:cocycles}
This subsection shall closely follow \cite[Section 3]{chandgotiameyerovitch}. Given a closed configuration space $X\subset \A^\V$ the set of \emph{asymptotic pairs} is given by
$$\Delta_X:=\{(x,y)\in X\times X\:|\: x,y \text{ differ at finitely many sites}\}.$$
In the case when the graph $\G= \Z^d$ and $X$ is a configuration space invariant under translation, the asymptotic relation coincides with the \emph{homoclinic relation}. If $\G$ is finite then $\Delta_X=X\times X$.
Following \cite{brightwell2000gibbs} a space $X$ is called \emph{frozen} if
$$\Delta_X=\{(x,x)\:|\:x\in X\}.$$

As in \cite{chandgotiameyerovitch} we shall now parametrise the space of conditional probabilities. A (real-valued) \emph{$\Delta_X$-cocycle} is a function $M:\Delta_X\longrightarrow \R$ satisfying
\begin{eqnarray*}\label{equation:cocycle_condition}
M(x,z)=M(x,y)+M(y,z) \text{ whenever }(x,y),(y,z)\in \Delta_X.
\end{eqnarray*}
If $M$ is a $\Delta_X$-cocycle then for all pairs $(x,y)\in \Delta_X$ 
\begin{eqnarray}
M(x,y)+M(y,x)=M(x,x)=M(x,x)+M(x,x)=0.\label{equation:flip_gives_negative_same_0}
\end{eqnarray}
A $\Delta_X$-cocycle is called a \emph{Markov cocycle} if in addition for any $(x, y)\in \Delta_X$, the value $M(x,y)$ depends only upon patterns on vertices where $x$ and $y$ differ and its boundary, that is, if $F$ is the set of vertices where $x$ and $y$ differ then $M(x,y)$ depends only of $x|_{F\cup \partial F}$ and $y|_{F\cup \partial F}$.

Given a subgroup $Gr\subset Aut(\G)$, a $Gr$-invariant $\Delta_X$-cocycle is a $\Delta_X$-cocycle $M$ which satisfies
$$M(x,y)=M(gx, gy)$$
for all $(x,y)\in \Delta_X$ and $g\in Gr$.

Any MRF $\mu$ yields a Markov cocycle $M$ on $supp(\mu)$ by
\begin{equation}\label{equation:specification_to_cocycle}
M(x,y):= \log\left(\frac{\mu([y]_\Lambda)}{\mu([x]_{\Lambda})}\right)\text{ for all }(x,y)\in \Delta_{supp(\mu)}
\end{equation}
for any $\Lambda\supset F\cup \partial F$ where $F$ is the set of vertices where $x$ and $y$ differ. Since $\mu$ is an MRF the right hand side is independent of the choice of $\Lambda$. For example the uniform MRF (where conditioned on the pattern on the boundary of a finite set $F$, all patterns on $F$ are equiprobable) yields the Markov cocycle $M=0$ and if the graph $\G$ is finite then any MRF $\mu$ yields the cocycle
\begin{equation*}
M(x,y):= \log\left(\frac{\mu(y)}{\mu(x)}\right)\text{ for all }x,y\in supp(\mu).
\end{equation*}
The function $\rho:\Delta_{supp(\mu)}\longrightarrow \R_+$ given by $\rho(x,y)=e^{M(x,y)}$ is the $\Delta_X$-Radon-Nikodym cocycle of $\mu$ as in \cite{petersen_schmidt1997}. This correspondence can be further generalised; given a topological Markov field we can consider a system of consistent conditional probability distributions with the Markov property called Markov specifications. There is a bijective correspondence between the space of Markov cocycles and Markov specifications. For a more detailed discussion on this topic see \cite{chandgotiameyerovitch}.

Given a topological Markov field $X$, \emph{the Gibbs cocycle }on $X$ corresponding to an interaction $V$ is a $\Delta_X$-cocycle given by
$$M(x,y):=\sum_{A\subset \V \text{ finite}}V(y|_A)-V(x|_A) \text{ for all } (x,y)\in \Delta_X.$$

Note that the sum is finite since there are only finitely many non-zero terms whenever $(x,y)\in \Delta_X$. Evidently any Gibbs cocycle with a nearest neighbour interaction is a Markov cocycle. This corresponds to the fact that every Gibbs state with a nearest neighbour interaction is an MRF.

Thus the distinction between MRFs and Gibbs state with a nearest neighbour interaction on the level of measures naturally yields a distinction on the level of corresponding cocycles.

\begin{prop}
Let $\mu$ be an MRF and $M$ be a Markov cocycle on $supp(\mu)$ given by
\begin{equation*}
M(x,y)= \log\left(\frac{\mu([y]_\Lambda)}{\mu([x]_{\Lambda})}\right)\text{ for all }(x,y)\in \Delta_{supp(\mu)}
\end{equation*}
for any $\Lambda\supset F\cup \partial F$ where $F$ is the set of vertices where $x$ and $y$ differ. Then $\mu$ is a Gibbs state with a nearest neighbour interaction if and only if $M$ is a Gibbs cocycle with some nearest neighbour interaction.
\end{prop}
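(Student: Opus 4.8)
The plan is to unpack both directions of the equivalence directly from the definitions, noting that the statement is essentially a consistency check between the two parametrisations — the measure-level statement (Gibbs state with n.n.\ interaction) and the cocycle-level statement (Gibbs cocycle with n.n.\ interaction) — and that the cocycle $M$ defined by the displayed formula is already the bridge between the two.

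First I would prove the forward direction. Suppose $\mu$ is a Gibbs state with a nearest neighbour interaction $V$, so that for all $x\in supp(\mu)$ and finite $A$ with $\partial A\subset B\subset A^c$ we have $\mu([x]_A\:|\:[x]_B)=\prod_{C\subset A\cup\partial A}e^{V([x]_C)}/Z_{A,x|_{\partial A}}$. Given $(x,y)\in\Delta_X$ with $F$ the (finite) set of sites where they differ, pick $\Lambda$ with $F\cup\partial F\subset\Lambda$, take $A=\Lambda$, $B=\partial\Lambda$, and compute the ratio $\mu([y]_\Lambda)/\mu([x]_\Lambda)$. Since $x|_{\partial\Lambda}=y|_{\partial\Lambda}$ (as $F\subset\Lambda$ so the boundary of $\Lambda$ is untouched), the normalising constants $Z_{\Lambda,x|_{\partial\Lambda}}$ and $Z_{\Lambda,y|_{\partial\Lambda}}$ coincide and cancel, and one is left with $M(x,y)=\log(\mu([y]_\Lambda)/\mu([x]_\Lambda))=\sum_{C\subset\Lambda\cup\partial\Lambda}\big(V([y]_C)-V([x]_C)\big)$. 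Finally, for any clique $C$ with $C\cap F=\emptyset$ we have $x|_C=y|_C$, so those terms vanish, and for cliques $C$ meeting $F$ we necessarily have $C\subset F\cup\partial F\subset\Lambda\cup\partial\Lambda$; thus the sum equals $\sum_{C\subset\V\text{ finite}}V([y]_C)-V([x]_C)$, which is exactly the Gibbs cocycle with interaction $V$. Hence $M$ is a Gibbs cocycle with the nearest neighbour interaction $V$.

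For the converse, suppose $M$ is a Gibbs cocycle with some nearest neighbour interaction $V$, i.e.\ $M(x,y)=\sum_{A\subset\V\text{ finite}}V([y]_A)-V([x]_A)$ for all $(x,y)\in\Delta_X$, with $V$ supported on cliques. I need to recover the Gibbs conditional-probability formula for $\mu$. Fix a finite $A$ and $B$ with $\partial A\subset B\subset A^c$, and fix the boundary pattern on $\partial A$. For two patterns $a,a'\in\B_A(X)$ compatible with that boundary pattern, choose $x,y\in supp(\mu)$ realising them and agreeing off $A$ (possible since $supp(\mu)$ is a topological Markov field, using $x|_{\partial A}=y|_{\partial A}$); then $(x,y)\in\Delta_X$ and, exactly as above, $\mu([x|_A]_A\:|\:[x]_{\partial A})/\mu([x|_{A'}]_A\:|\:[x]_{\partial A}) = \mu([y]_\Lambda)/\mu([x]_\Lambda) = e^{M(x,y)} = \prod_{C\subset A\cup\partial A}e^{V([y]_C)}/\prod_{C\subset A\cup\partial A}e^{V([x]_C)}$, since only clique terms inside $A\cup\partial A$ survive. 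This says that, for fixed boundary condition, $a\mapsto \mu([a]_A\:|\:[x]_{\partial A})$ is proportional to $\prod_{C\subset A\cup\partial A}e^{V([a\vee x|_{\partial A}]_C)}$; normalising over the finitely many allowed $a$ defines $Z_{A,x|_{\partial A}}$ and yields precisely the defining equation of a Gibbs state with interaction $V$. Since $\mu$ is assumed to be an MRF it is automatically an MRF in the required sense, so this shows $\mu$ is Gibbs with the nearest neighbour interaction $V$.

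The only real subtlety — the step I expect to need the most care — is the bookkeeping on the supports of the clique terms: one must check that in $\sum_A V([y]_A)-V([x]_A)$ every surviving term comes from a clique $C$ with $C\subset F\cup\partial F$ (equivalently $C\subset A\cup\partial A$ in the conditional computation), so that the Gibbs cocycle formula and the Gibbs specification formula genuinely match up after cancellation of the boundary-dependent normalisers. This uses that $V$ is nearest neighbour together with $x|_{F^c}=y|_{F^c}$. A minor additional point in the converse is invoking Proposition \ref{Proposition: nearest neighbour is TMF} / the topological Markov field property of $supp(\mu)$ to guarantee the existence of the gluing configuration $(x,y)\in\Delta_X$ realising an arbitrary pair of boundary-compatible patterns on $A$; everything else is the cocycle identity and elementary manipulation of conditional probabilities.
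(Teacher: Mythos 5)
Your proof is correct and is essentially the argument the paper has in mind: the paper offers no separate proof, stating only that the proposition ``follows from the discussions preceding'' it (the MRF-to-cocycle correspondence and the definitions of Gibbs states and Gibbs cocycles), and your two directions are exactly that unpacking, including the key bookkeeping that surviving clique terms lie in $F\cup\partial F$ and the use of the topological Markov field property to glue boundary-compatible patterns in the converse. The only small gloss is in the forward direction, where you apply the Gibbs conditional formula directly to the unconditional ratio $\mu([y]_\Lambda)/\mu([x]_\Lambda)$; to be precise one should first pass to $\mu([y]_{\Lambda\cup\partial\Lambda})/\mu([x]_{\Lambda\cup\partial\Lambda})$ (allowed, since the cocycle value is independent of the choice of set containing $F\cup\partial F$) and factor through conditioning on $\partial\Lambda$, where $x|_{\partial\Lambda}=y|_{\partial\Lambda}$ makes both the boundary marginals and the normalisers cancel.
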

\noindent The proof of the proposition follows from the discussions preceding the proposition.

Let $X$ be a topological Markov field. We shall denote the set of all Markov cocycles by $\M_X$ and the set of all Gibbs cocycles with nearest neighbour interactions by $\Gi_X$ . Given a subgroup $Gr\subset Aut(\G)$ we denote by $\M^{Gr}_X$ the set of all $Gr$-invariant Markov cocycles and by $\Gi^{Gr}_X$ the space of all Gibbs cocycles with $Gr$-invariant nearest neighbour interactions. Note that the space of $Gr$-invariant Gibbs cocycles with nearest neighbour interactions is not always the same as the space of Gibbs cocycles with $Gr$-invariant nearest neighbour interactions. An example can be found in \cite[Section 5]{chandgotiameyerovitch}.

The space of Markov cocycles has a natural vector space structure. Indeed given $M_1, M_2\in \M_X$ and $c\in \R$
$$cM_1+ M_2\in \M_X$$
where the addition is point-wise, that is, $(cM_1+M_2)(x,y)= cM_1(x,y)+M_2(x,y)$ for all $(x,y)\in \Delta_X$. Further it follows that  given a subgroup $Gr\subset Aut(\G)$, $\Gi_X$ and $\Gi^{Gr}_X\subset \M^{Gr}_X$ are subspaces of $\M_X$. If $\G$ is a finite graph the conditions under which $\M_X= \Gi_X$ are very similar to the balanced conditions as mentioned in \cite{mouss}.

A close inspection of the proof of the weak version of the Hammersley-Clifford theorem (Theorem \ref{Theorem:Weak_Hammersley-Clifford}) yields another formulation in terms of cocycles. We will not use this or the weaker version for proving the main results of this paper (Theorems \ref{thm:main theorem} and \ref{thm:G-invariant main theorem}). 
\begin{thm}[Hammersley-Clifford, strong version] \label{Theorem:Strong_Hammersley-Clifford} Let $\G=(\V, \E1)$ be a graph and $X$ be a topological Markov field on the graph $\G$ with a safe symbol. Then,
\begin{enumerate}
\item
Any Markov cocycle on $X$ is a Gibbs cocycle with a nearest neighbour interaction, that is, $\M_X=\Gi_X$.
\item
Given a subgroup $Gr\subset Aut(\G)$ every $Gr$-invariant Markov cocycle on $X$ is a Gibbs cocycle with some $Gr$-invariant nearest neighbour interaction, that is, $\M^{Gr}_X=\Gi^{Gr}_X$.
\end{enumerate}•
\end{thm}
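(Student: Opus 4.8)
The plan is to reprove this directly at the level of cocycles, following the structure of the proof of the weak version (Theorem~\ref{Theorem:Weak_Hammersley-Clifford}): construct a nearest neighbour interaction $V$ from a given Markov cocycle $M$ by M\"obius inversion over subsets, using the safe symbol in place of the vacuum state. Since a Gibbs cocycle with a nearest neighbour interaction is automatically a Markov cocycle, only the inclusions $\M_X\subset\Gi_X$ and $\M^G_X\subset\Gi^G_X$ require proof; fix $M\in\M_X$ (and in part~(2) assume $M$ is $G$-invariant). We may assume $X\neq\emptyset$, so the safe symbol property forces the constant configuration $\star^\V\in X$; moreover for every finite $A\subset\V$ and $a\in\B_A(X)$ the configuration $x^a_A$ equal to $a$ on $A$ and to $\star$ off $A$ lies in $X$, with $(\star^\V,x^a_A)\in\Delta_X$. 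Define
$$V([a]_A)=\sum_{B\subseteq A}(-1)^{|A\setminus B|}\,M\bigl(\star^\V,\,x^{a|_B}_B\bigr).$$
This is a finite real number, and as it will be supported on cliques of a locally finite graph the summability conditions on an interaction hold automatically.

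The main step is to show $V([a]_A)=0$ whenever $diam(A)>1$. Choosing $u,v\in A$ with $u\nsim v$ and writing $A_0=A\setminus\{u,v\}$, splitting the defining sum according to $B\cap\{u,v\}$ expresses $V([a]_A)$ as $\sum_{B_0\subseteq A_0}(-1)^{|A_0\setminus B_0|}\,\delta(B_0)$, where $\delta(B_0)$ is the alternating sum of $M(\star^\V,x^{a|_C}_C)$ over $C\in\{B_0,\,B_0\cup\{u\},\,B_0\cup\{v\},\,B_0\cup\{u,v\}\}$. Using the cocycle identity to subtract along the flip at $u$ in each of the two relevant pairs, $\delta(B_0)$ reduces to a difference of two values of $M$, each taken on a pair of configurations that differ only at the vertex $u$; since $v\notin\partial\{u\}$, the Markov property of $M$ (its value on such a pair depends only on the pattern on $\{u\}\cup\partial\{u\}$) forces these two values to be equal, so $\delta(B_0)=0$ and hence $V([a]_A)=0$. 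I expect this computation --- essentially the Hammersley-Clifford content of the theorem --- to be the main obstacle, though it is bookkeeping once the pairing and the role of non-adjacency are set up. For part~(2), substituting $B\mapsto gB$ in the defining sum and using $g\star^\V=\star^\V$ together with the $G$-invariance of $M$ shows $V([ga]_{gA})=V([a]_A)$ for all $g\in G$, so the same $V$ is $G$-invariant.

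It remains to check that the Gibbs cocycle of $V$ equals $M$. A short pairing argument (pair a subset $C$ of a clique $B$ with $C\cup\{v\}$ whenever the pattern value at $v$ is $\star$) shows $V$ vanishes on any clique pattern containing a $\star$; in particular $V([\star^\V]_B)=0$ for all finite $B$. Combining this with the vanishing off cliques and M\"obius inversion on the Boolean lattice of subsets of $A$ gives, for every finite $A$ and $a\in\B_A(X)$, that the Gibbs cocycle of $V$ at $(\star^\V,x^a_A)$ equals $\sum_{B\subseteq A}V([a|_B]_B)=M(\star^\V,x^a_A)$. Put $N:=M-(\text{Gibbs cocycle of }V)\in\M_X$; it vanishes on all pairs $(\star^\V,x^a_A)$, and it remains to deduce $N\equiv0$. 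Given $(x,y)\in\Delta_X$ with difference set $F$, the safe symbol property yields $x',y'\in X$ agreeing with $x,y$ on $F\cup\partial F$ and equal to $\star$ off $F\cup\partial F$; then $(x',y')\in\Delta_X$ has difference set $F$, so $N(x',y')=N(x,y)$ by the Markov property of $N$, while $(\star^\V,x')$ and $(\star^\V,y')$ lie in $\Delta_X$ since $x',y'$ differ from $\star^\V$ only on the finite set $F\cup\partial F$. As $x'=x^{a'}_{F\cup\partial F}$ with $a'=x'|_{F\cup\partial F}$ and similarly for $y'$, the cocycle identity gives $N(x,y)=N(x',y')=-N(\star^\V,x')+N(\star^\V,y')=0$. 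Hence $\M_X=\Gi_X$, and the $G$-invariant $V$ gives $\M^G_X=\Gi^G_X$. This last localisation is the only point where the infiniteness of $\G$ matters, and the safe symbol is precisely what makes it work.
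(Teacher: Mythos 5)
Your construction is correct, and it is a genuinely different route from the one the paper takes. The paper does not prove Theorem \ref{Theorem:Strong_Hammersley-Clifford} directly at the cocycle level: it deduces part (1) from the weak version (Theorem \ref{Theorem:Weak_Hammersley-Clifford}) together with the cited fact that every Markov cocycle on a topological Markov field with a safe symbol is realised by an MRF $\mu$ with $supp(\mu)=X$ (arguments as in proposition 3.2 of the Chandgotia--Meyerovitch paper), and it handles part (2) by invoking a separate $G$-invariant result from the thesis. You instead run the classical M\"obius-inversion (canonical potential) argument directly on the cocycle: $V([a]_A)=\sum_{B\subseteq A}(-1)^{|A\setminus B|}M(\star^\V,x^{a|_B}_B)$, kill non-clique sets by the pairing at two non-adjacent vertices plus the Markov property of $M$, recover $M$ on pairs $(\star^\V,x^a_A)$ by inversion and the vanishing on patterns containing $\star$, and then localise a general asymptotic pair to $F\cup\partial F$ using the safe symbol and the cocycle identity. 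Each step checks out (the intermediate configurations lie in $X$ precisely because $\star$ is safe, the two pairs in the $\delta(B_0)$ computation have difference set $\{u\}$ and agree on $\{u\}\cup\partial\{u\}$ since $v\notin\{u\}\cup\partial\{u\}$, and the final reduction uses that $(x,y)$ and $(x',y')$ are Markov-similar). What your route buys is self-containment and an explicit formula for the interaction: you bypass entirely the nontrivial measure-level step (existence of a fully supported MRF inducing a given Markov cocycle) that the paper only cites, and the $G$-invariant case comes for free from equivariance of the canonical formula (using $g\star^\V=\star^\V$), rather than from a separate theorem; this is very much in the spirit of the explicit constructions the paper later performs in Lemma \ref{lemma:construction_of_V^prime}. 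What the paper's route buys is brevity and reuse of known measure-theoretic statements. The only point worth making explicit in your write-up is that part (2) tacitly assumes $X$ is $G$-invariant (as the definition of $\M^G_X$ requires), which is what licenses the substitution $B\mapsto gB$ inside the language of $X$, and that the empty case $X=\emptyset$ is trivial.
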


Given a topological Markov field $X$ with a safe symbol, any MRF $\mu$ such that the $supp(\mu)=X$ yields by $\eqref{equation:specification_to_cocycle}$ a Markov cocycle on $X$. Moreover if $\mu$ is invariant under a subgroup $Gr\subset Aut(\G)$ then the cocycle obtained is also invariant under the same. By Theorem \ref{Theorem:Strong_Hammersley-Clifford} the cocycle is Gibbs with a $Gr$-invariant nearest neighbour interaction. Thus we know that the measure is Gibbs with some $Gr$-invariant nearest neighbour interaction. Hence Theorem \ref{Theorem:Strong_Hammersley-Clifford} generalises Theorem \ref{Theorem:Weak_Hammersley-Clifford}. However the proof of the first part of this version follows from Theorem \ref{Theorem:Weak_Hammersley-Clifford} with the additional knowledge that given a Markov cocycle on a topological Markov field $X$ with a safe symbol there exists a corresponding MRF $\mu$ such that $supp(\mu)=X$. This in turn is implied by arguments very similar to those in the proof of Proposition 3.3 in \cite{chandgotiameyerovitch}. The second part of the theorem can be proved using Theorem 2.0.6 in \cite{Chandgotia}, noting that the conclusion holds even if the MRF is not invariant under $Gr$ but the corresponding Markov cocycle is.

We seek a generalisation of Theorem \ref{Theorem:Strong_Hammersley-Clifford} when the graph $\G$ is bipartite.

\section{Hammersley-Clifford Spaces and Strong Config-Foldings}
\label{section:Hamm-cliff space}
\subsection{Hammersley-Clifford Spaces}
\label{subsection:Hamm-cliff spaces}
A topological Markov field $X\subset \A^\V$ is called \emph{Hammersley-Clifford} if the space of Markov cocycles on $X$ is equal to the space of Gibbs cocycles on $X$, that is, $\M_X= \Gi_X$. If $X$ is invariant under the some subgroup $Gr\subset Aut(\G)$ then $X$ is said to be \emph{$Gr$-Hammersley-Clifford} if $\M^{Gr}_X= \Gi^{Gr}_X$.

\noindent\textbf{Examples:}
\begin{enumerate}
\item
A frozen space of configurations. \label{enumerate:frozen_configurations}

If $X$ is frozen then $\Delta_X$ is the diagonal relation. Then $M\equiv 0$ is the only Markov cocycle on the space. It is Gibbs for the interaction $V\equiv 0$.
\item
A topological Markov field with a safe symbol.\label{enumerate:tmf_safe_symbol}

Theorem \ref{Theorem:Strong_Hammersley-Clifford} implies that any $Gr$-invariant configuration space with a safe symbol is $Gr$-Hammersley-Clifford for any subgroup $Gr\subset Aut(\G)$.

\item
$Hom(\G,Edge)$ where $Edge$ consists of two vertices $0$ and $1$ connected by a single edge. \label{enumerate:homomorphisms_edge}

If $\G$ is not bipartite then $Hom(\G, Edge)$ is empty. If $\G$ is bipartite and connected, then $Hom(\G, Edge)$ consists of two configurations only. It follows that $Hom(\G, Edge)$ is $Gr$-Hammersley-Clifford for any subgroup $Gr\subset Aut(\G)$ and graph $\G$.

\item
$Hom(\Z^d, C_n)$ where $C_n$ is an n-cycle, $d>1$ and $n\neq 4$ \label{enumerate:homomorphisms_cycle}\cite{chandgotiameyerovitch}.

This gives examples of Hammersley-Clifford spaces which are not $Gr$-Hammersley-Clifford spaces for some subgroup $Gr\subset Aut(\Z^d)$. It will follow from Theorem \ref{thm:G-invariant main theorem} below and Example \ref{enumerate:homomorphisms_edge} above that $Hom(\G, C_4)$ is both Hammersley-Clifford and $Gr$-Hammersley-Clifford for all bipartite graphs $\G$ and subgroups $Gr\subset Aut(\G)$.
\end{enumerate}

\subsection{Markov-Similar and $V$-Good Pairs}\label{subsection:Markov-similar}
Suppose we are given a closed configuration space $X$, a Markov cocycle $M\in \M_X$ and an interaction $V$ on $X$. If $M$ is not Gibbs with the interaction $V$ we might be still interested in the extent to which it is not. An asymptotic pair $(x,y)\in \Delta_X$ is called \emph{(M,V)-good} if
$$M(x,y)=\sum_{S\subset \V\text{ finite}}\left(V(y|_S)- V(x|_S)\right).$$
In most cases the Markov cocycle $M$ will be fixed, so we will drop $M$ and call a pair $V$-good instead of $(M,V)$-good. An asymptotic pair $(x,y)\in \Delta_X$ is said to be \emph{Markov-similar} to $(x',y')$ if there is a finite set $A\subset \V$ such that
\begin{eqnarray*}
x_u&=&y_u\text{, }\\
x'_u&=&y'_u\text{ for }u\in A^c
\end{eqnarray*}•\
and
\begin{eqnarray*}
x_u&=&x'_u\text{,}\\
y_u&=&y'_u\text{ for }u \in A\cup \partial A.
\end{eqnarray*}
It follows that if $M$ is a Markov cocycle on $X$ and $(x,y), (x',y')\in \Delta_X$ are Markov-similar then $M(x,y)=M(x',y')$. Being $V$-good is infectious.
\begin{prop}\label{prop:V-goodness_equivalence_markov_similar}
Let $X$ be an n.n.constraint space, $M$ a Markov cocycle and $V$ a nearest neighbour interaction on $X$. The set of $V$-good pairs is an equivalence relation on $X$. Additionally if $(x,y),(x',y')\in \Delta_X$ are Markov similar then $(x,y)$ is $V$-good if and only if $(x',y')$ is $V$-good.
\end{prop}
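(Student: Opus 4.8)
The plan is to recognise $V$-goodness as the vanishing of a single Markov cocycle, after which both assertions follow directly from the cocycle axioms and the meaning of ``Markov''. Write $M_V$ for the Gibbs cocycle on $X$ corresponding to the nearest neighbour interaction $V$, namely $M_V(x,y)=\sum_{S\subset\V\text{ finite}}\left(V([y]_S)-V([x]_S)\right)$; this is well defined on $\Delta_X$ since $X$ is a topological Markov field (Proposition~\ref{Proposition: nearest neighbour is TMF}) and only finitely many terms are nonzero. By definition a pair $(x,y)\in\Delta_X$ is $V$-good precisely when $N(x,y)=0$, where $N:=M-M_V$. Since $M_V\in\Gi_X\subset\M_X$ and $M\in\M_X$ by hypothesis, we have $N\in\M_X$; in particular $N$ is a $\Delta_X$-cocycle and, being Markov, $N(x,y)$ depends only on $x|_{F\cup\partial F}$ and $y|_{F\cup\partial F}$, where $F$ is the finite set of sites on which $x$ and $y$ differ.

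For the first assertion it then suffices to check that $\{(x,y)\in\Delta_X:N(x,y)=0\}$ is an equivalence relation on $X$: it is reflexive because $N(x,x)=0$, symmetric because $N(y,x)=-N(x,y)$ by~\eqref{equation:flip_gives_negative_same_0}, and transitive because $\Delta_X$ is closed under composition and $N(x,z)=N(x,y)+N(y,z)$ whenever $(x,y),(y,z)\in\Delta_X$.

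For the second assertion, let $(x,y)$ and $(z,w)$ be Markov-similar with witnessing finite set $A\subset\V$. Since $x$ and $y$ agree on $A^c$ while, throughout $A\cup\partial A$, $x$ agrees with $z$ and $y$ agrees with $w$, the set $F$ on which $x$ and $y$ differ is contained in $A$ and equals the set on which $z$ and $w$ differ; hence $F\cup\partial F\subset A\cup\partial A$, and on this set $x|_{F\cup\partial F}=z|_{F\cup\partial F}$ and $y|_{F\cup\partial F}=w|_{F\cup\partial F}$. Because $N$ is a Markov cocycle this forces $N(x,y)=N(z,w)$, so $N(x,y)=0$ if and only if $N(z,w)=0$; that is, $(x,y)$ is $V$-good if and only if $(z,w)$ is.

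The content of the argument lies in the first paragraph --- identifying the $V$-good pairs as the zero set of the Markov cocycle $M-M_V$; everything afterwards is a mechanical use of the cocycle identity and the definition of a Markov cocycle. The only step needing a little care is the bookkeeping in the last paragraph: verifying that the two Markov-similar pairs have the \emph{same} differing set $F$ and that $F\cup\partial F\subset A\cup\partial A$. (If one wishes to avoid invoking that a Gibbs cocycle with a nearest neighbour interaction is Markov, the equality $M_V(x,y)=M_V(z,w)$ can instead be established termwise, using that each clique $S$ with $V([x]_S)\neq V([y]_S)$ meets $F$ and hence lies in $A\cup\partial A$.)
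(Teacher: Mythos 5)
Your proof is correct and follows essentially the same route as the paper's: the equivalence-relation part comes from the cocycle identities, and the Markov-similarity part from the locality of both $M$ and the nearest-neighbour sum. Your repackaging of $V$-goodness as the vanishing of the Markov cocycle $N=M-M_V$ is a clean reformulation of the paper's argument, which instead compares the two interaction sums termwise by splitting cliques between $A\cup\partial A$ and $A^c$ --- exactly the variant you mention in your closing parenthesis.
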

\begin{proof} The reflexivity and symmetry of the relation $V$-good follows from \eqref{equation:flip_gives_negative_same_0} and the cocycle condition implies that the relation is transitive. Thus the relation is an equivalence relation.

Let $(x,y), (x',y')\in \Delta_X$ be Markov-similar pairs. Since $M$ is a Markov cocycle
\begin{equation}
M(x,y)= M(x',y').\label{equation:Markovsimilarcocycleequal}
\end{equation}•
Let $A\subset \V$ be a finite set such that
$$x_u=x'_u\text{ and }y_u=y'_u$$
for $u\in A\cup \partial A$ and
$$x_u=y_u\text{ and }x'_u=y'_u$$
for $u \in A^c$. If $S\subset \V$ is a clique then either $S\subset A\cup \partial A$ or $S\subset A^c$.
If $S\subset A\cup \partial A$ then
$$x|_S= x'|_S\text{ and }y|_S=y'|_S$$
implying
$$V(y|_S)-V(x|_S)=V(y'|_S)-V(x'|_S).$$
If $S\subset A^c$ then
$$x|_S=y|_S\text{ and }x'|_S=y'|_S$$
implying
$$V(y|_S)-V(x|_S)=V(y'|_S)-V(x'|_S)=0.$$
Since $V$ is a nearest neighbour interaction
$$\sum_{S\subset\V\text{ finite}}V(y|_S)-V(x|_S)=\sum_{S\subset\V\text{ finite}}V(y'|_S)-V(x'|_S).$$
Since $(x,y)$ is a $V$-good pair by \eqref{equation:Markovsimilarcocycleequal}
$$M(x',y')=M(x,y)=\sum_{S\subset \V\text{ finite}}\left(V(y|_S)- V(x|_S)\right)=\sum_{S\subset\V\text{ finite}}V(y'|_S)-V(x'|_S)$$
completing the proof.
\end{proof}

\begin{corollary}\label{corollary:chain_of_V-goodness}
Let $X$ be an n.n.constraint space, $M$ a Markov cocycle and $V$ a nearest neighbour interaction on $X$. Suppose for some $(x,y)\in \Delta_X$ there exists a chain $x=x^1, x^2, x^3, \ldots, x^n=y$ such that each $(x^i, x^{i+1})\in \Delta_X$ and is Markov similar to a $V$-good pair. Then $(x,y)$ is $V$-good.
\end{corollary}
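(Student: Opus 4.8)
The plan is to reduce everything to the two parts of Proposition \ref{prop:V-goodness_equivalence_markov_similar}, which have already done the real work. First I would observe that the second assertion of that proposition converts the hypothesis into a cleaner one: since each consecutive pair $(x_i,x_{i+1})$ is Markov similar to some $V$-good pair, Proposition \ref{prop:V-goodness_equivalence_markov_similar} immediately gives that $(x_i,x_{i+1})$ is itself $V$-good, for every $1\leq i\leq n-1$. So we are reduced to showing that $V$-goodness of the consecutive links $(x_1,x_2),(x_2,x_3),\dots,(x_{n-1},x_n)$ forces $V$-goodness of $(x_1,x_n)=(x,y)$.

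For this I would invoke the first assertion of Proposition \ref{prop:V-goodness_equivalence_markov_similar}, namely that the set of $V$-good pairs is an equivalence relation on $X$; in particular it is transitive. A short induction on $n$ then finishes the argument: the case $n=2$ is the hypothesis, and if $(x_1,x_{n-1})$ is $V$-good (inductive hypothesis applied to the truncated chain) and $(x_{n-1},x_n)$ is $V$-good, then transitivity yields that $(x_1,x_n)$ is $V$-good. One small bookkeeping point to check along the way is that the intermediate pairs $(x_1,x_j)$ actually lie in $\Delta_X$, so that ``$V$-good'' is even meaningful for them; this is automatic because the asymptotic relation is transitive (differing at finitely many sites is preserved under composition), and each $(x_i,x_{i+1})\in\Delta_X$ by hypothesis.

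I do not anticipate a genuine obstacle here: the statement is a formal consequence of the equivalence-relation structure established in Proposition \ref{prop:V-goodness_equivalence_markov_similar}, and the only thing to be careful about is that transitivity of $V$-goodness in that proposition is ultimately powered by the cocycle identity $M(x,z)=M(x,y)+M(y,z)$, which is exactly what makes the telescoping of the interaction sums along the chain consistent. In other words, the ``hard part'' was already absorbed into the proof of Proposition \ref{prop:V-goodness_equivalence_markov_similar}, and the corollary is just the iterated form of it.
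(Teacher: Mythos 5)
Your argument is correct and is exactly the paper's intended proof: the second assertion of Proposition \ref{prop:V-goodness_equivalence_markov_similar} upgrades each link $(x_i,x_{i+1})$ to a $V$-good pair, and the equivalence-relation (transitivity) part of the same proposition, applied inductively along the chain, gives that $(x,y)$ is $V$-good. Your extra remark that the intermediate pairs lie in $\Delta_X$ because asymptotic pairs compose is a harmless bookkeeping point the paper leaves implicit.
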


This follows from Proposition \ref{prop:V-goodness_equivalence_markov_similar}

\subsection{Graph and Strong Config-Folding}
\label{subsection:Graphfolding}
We shall now introduce graph folding and extract some of its properties so as to define folding for configuration spaces. Graph folding was introduced in \cite{nowakowskiwinkler} and used in \cite{brightwell2000gibbs} so as to prove a slew of properties which are satisfied by a given graph if and only if it is satisfied by its folds. Fix some finite undirected graph $\H=(\V_\H,\E1_\H)$ without multiple edges. For any vertex $a\in \H$ we say that $\H\setminus\{a\}$ is a \emph{fold} of the graph $\H$ if there exists $b\in \H\setminus\{a\}$ such that
$$\{c\:|\: c\sim_\H a\}\subset \{c\:|\: c\sim_\H b\}.$$
In such a case we say that $a$ is folded into $b$.

For example in the 4-cycle $C_4$ the vertex $3$ can be folded into the vertex $1$. However no vertex can be folded in the 3-cycle $C_3$.
\begin{figure}[h]
\includegraphics[angle=0,
width=.3\textwidth]{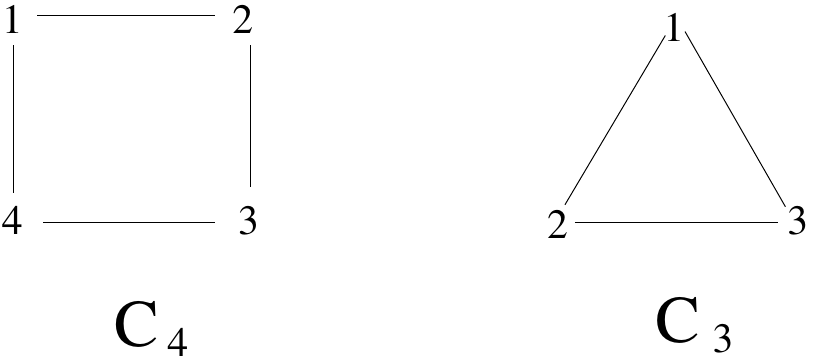}
\caption{$C_4$ and $C_3$}
\end{figure}

Given a graph $\G=(\V, \E1)$ and $v\in \V$, the \emph{$n$-ball around $v$} is given by
$$D_n(v):=\{w\in \V\:|\: d_\G(v,w)\leq n\}$$
where $d_\G$ is the graph distance on $\G$.
Given a symbol $a$ and $F\subset \V$ we denote by $a^F$ the pattern on $F$ given by $a^F_v=a$ for all $v\in F$.

We wish to generalise the following property:

\begin{prop} Consider a bipartite graph $\G=(\V,\E1)$, a graph $\H=(\V_\H, \E1_\H)$ and vertices $a,b\in \V_\H$ where the vertex $a$ can be folded into the vertex $b$. Let $X=Hom(\G, \H)$. Then for all edges $(v_1,v_2), (v_2, v_3)\in \E1$ and $c\in \V_\H$, $\l a,c\r_{\{v_1, v_2\}}\in \B_{\{v_1, v_2\}}(X)$ implies
\begin{eqnarray*}
\l b,c\r_{\{v_1, v_2\}}\in \B_{\{v_1, v_2\}}(X)\\
\l c,b\r_{\{v_2, v_3\}}\in \B_{\{v_2, v_3\}}(X)\text{ and}\\
b^{\partial D_1(v_1)}\in \B_{\partial D_1(v_1)}(X).
\end{eqnarray*}•
\end{prop}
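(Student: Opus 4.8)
The plan is to verify each of the three membership claims directly from the definition of $Hom(\G,\H)$ as the n.n.constraint space $X_\F$ with $\F=\{[a',b']_{\{v,w\}}\:|\:a'\nsim b'\in\V_\H,\ v\sim w\in\V\}$, using the folding hypothesis $\{c'\:|\:c'\sim a\}\subset\{c'\:|\:c'\sim b\}$ to promote any homomorphism witnessing $[a,c]_{\{v_1,v_2\}}$ into homomorphisms witnessing the desired patterns. The starting point is a homomorphism $x\in X$ with $x_{v_1}=a$, $x_{v_2}=c$; in particular $a\sim c$ in $\H$, so by the folding hypothesis $b\sim c$ as well.

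For the first claim, $[b,c]_{\{v_1,v_2\}}\in\B_{\{v_1,v_2\}}(X)$: I would apply Proposition \ref{prop:changing_remain_nnconstraint} (specialised to $r=1$) at the vertex $v_1$, changing $x_{v_1}=a$ to $b$. Its hypothesis requires $[x_w,b]_{\{w,v_1\}}\in\B_{\{w,v_1\}}(X)$ for every $w\sim v_1$; since $x$ is a homomorphism, $x_{v_1}=a\sim x_w$, hence by the folding hypothesis $b\sim x_w$, so $[x_w,b]_{\{w,v_1\}}$ is not in $\F$ and is therefore allowed. Thus $\theta^{v_1}_{b}(x)\in X$, and this configuration has $v_1\mapsto b$, $v_2\mapsto c$, giving the first pattern. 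The third claim, $[b]_{\partial D_1(v_1)}\in\B_{\partial D_1(v_1)}(X)$, is the analogous statement "one step out": every $w\in\partial D_1(v_1)=\partial\{v_1\}$ is adjacent to $v_1$, so in $x$ we have $x_w\sim a$, hence $x_w\sim b$; in a bipartite graph the vertices of $\partial\{v_1\}$ form an independent set, so Proposition \ref{prop:changing_remain_nnconstraint} (with $r=|\partial\{v_1\}|$, all $c_i=b$) lets me simultaneously set every $w\in\partial\{v_1\}$ to $b$, provided each edge-constraint $[x_{w'},b]_{\{w,w'\}}$ for $w'\sim w$ is allowed — but that holds because $b$ is adjacent in $\H$ to everything $a$ is adjacent to, and more to the point I only need that the resulting pattern $[b]_{\partial\{v_1\}}$ extends to a full configuration; alternatively, a cleaner route is to note each $w$ individually satisfies $b\sim x_{w''}$ for all $w''\sim w$ only if $x_{w''}$ was adjacent to something forcing it, so I would instead argue site-by-site that $b$ can legally occupy each $w\in\partial\{v_1\}$ and invoke the proposition once.

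The second claim, $[c,b]_{\{v_2,v_3\}}\in\B_{\{v_2,v_3\}}(X)$, uses the edge $(v_2,v_3)$: here I would start again from $x$ (with $x_{v_2}=c$), note $v_3\sim v_2$ so $x_{v_3}\sim c$, and I want a homomorphism with $v_2\mapsto c$, $v_3\mapsto b$. This requires $b\sim c$, which we already have from the folding hypothesis. Applying Proposition \ref{prop:changing_remain_nnconstraint} at $v_3$ to change $x_{v_3}$ to $b$ needs $[x_w,b]_{\{w,v_3\}}\in\B_{\{w,v_3\}}(X)$ for all $w\sim v_3$; since $x_{v_3}\sim x_w$ this is not immediate from folding unless $x_{v_3}$ happened to be $a$ — so the correct move is to first change $v_1$ and then use that $c=x_{v_2}$ is adjacent to $b$, placing $b$ at $v_3$ only when the remaining neighbours of $v_3$ permit it, which they do because $b\sim c$ and bipartiteness keeps the constraints local to edges. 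The main obstacle I anticipate is exactly this second claim: unlike the first and third, it modifies the configuration at a vertex ($v_3$) two steps from where the folding information ($x_{v_1}=a$) lives, so I must route the argument through $v_2$ and be careful that every edge incident to $v_3$ other than $\{v_2,v_3\}$ still carries an allowed pair after the substitution — the bipartite hypothesis is what guarantees these edges are disjoint from the edges touched by the $v_1$-substitution, so no conflict arises.
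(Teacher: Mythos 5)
Your treatment of the first pattern is sound: starting from $x\in X$ with $x_{v_1}=a$, $x_{v_2}=c$, every neighbour $w$ of $v_1$ carries a symbol adjacent to $a$, hence to $b$, so $\theta^{v_1}_b(x)\in X$ by proposition \ref{prop:changing_remain_nnconstraint}. But the second and third claims are not established. For the second, placing $b$ at $v_3$ requires $b\sim x_w$ in $\H$ for \emph{every} $w\sim v_3$; the folding hypothesis and $b\sim c$ only control the single neighbour $v_2$, and nothing is known about the symbols at the other neighbours of $v_3$ (they are adjacent to $x_{v_3}$, not to $a$). Your appeal to ``bipartiteness keeps the constraints local to edges'' does not help: the problematic edges are incident to $v_3$ itself, so they are exactly the ones touched by the substitution. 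For the third, you have misread the set: $\partial D_1(v_1)$ is the outer boundary of the $1$-ball, i.e.\ the vertices at distance exactly $2$ from $v_1$, not $\partial\{v_1\}$. Even for $\partial\{v_1\}$ your argument has the same gap — to put $b$ at $w\sim v_1$ you would need $b\sim x_{w''}$ for all $w''\sim w$, including $w''=v_1$ where the symbol is $a$ (folding does not give $a\sim b$), and including distance-$2$ vertices about which nothing is known; attempting to repair this by further substitutions cascades outward without terminating.

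The missing idea is that the language only asks for \emph{some} configuration realizing the pattern, so one can discard $x$ entirely. Since $[a,c]_{\{v_1,v_2\}}\in \B_{\{v_1,v_2\}}(X)$ forces $a\sim c$ in $\H$, folding gives $b\sim c$; hence the configuration equal to $b$ on the partite class $P_1$ containing $v_1$ and equal to $c$ on the other class $P_2$ is a homomorphism (every edge of $\G$ carries the allowed pair $\{b,c\}$). This single configuration witnesses all three patterns at once, because $v_2\in P_2$, while $v_3$ and all of $\partial D_1(v_1)$ (distance $2$ from $v_1$) lie in $P_1$. This is the paper's proof, and it is where the bipartite hypothesis actually enters.
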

\begin{proof} Since $a\sim_\H c$ and $a$ can be folded into the vertex $b$ we have $b\sim_\H c$. Consider partite classes $P_1, P_2\subset \V$ of $\G$ such that $v_1\in P_1$. Then the configuration $x\in \V^{\V_\H}$ given by
$$x_{v}:=\begin{cases}
b\text{ if }v\in P_1\\
c\text{ if }v\in P_2\end{cases}$$
is an element of $Hom(\G, \H)$. Thus
\begin{gather*}
\l b,c\r_{\{v_1, v_2\}}=x|_{\{v_1,v_2\}}\in \B_{\{v_1, v_2\}}(X)\\
\l c,b\r_{\{v_2, v_3\}}=x|_{\{v_2,v_3\}}\in \B_{\{v_2, v_3\}}(X)\text{ and}\\
b^{\partial D_1(v_1)}=x|_{\partial D_1(v_1)}\in \B_{\partial D_1(v_1)}(X).
\end{gather*}•
\end{proof}

For the rest of the paper fix a bipartite graph $\G=(\V, \E1)$. Let $X\subset \A^\V$ be an n.n.constraint space. Given distinct symbols $a,b\in \A$, we say that $a$ can be \emph{strongly config-folded} into $b$ if for all edges $(v_1,v_2), (v_2, v_3)\in \E1$ and $c\in \A$, $\l a,c\r_{\{v_1, v_2\}}\in \B_{\{v_1, v_2\}}(X)$ implies
\begin{eqnarray}
\l b,c\r_{\{v_1, v_2\}}\in \B_{\{v_1, v_2\}}(X), \label{equation:foldingdefn1}\\
\l c,b\r_{\{v_2, v_3\}}\in \B_{\{v_2, v_3\}}(X)\text{ and}\label{equation:foldingdefn2}\\
b^{\partial D_1(v_1)}\in \B_{\partial D_1(v_1)}(X).\label{equation:foldingdefn3}
\end{eqnarray}•
In such a case, $X\cap (\A\setminus\{a\})^\V$ is called a \emph{strong config-fold} of $X$ and $X$ is called a \emph{strong config-unfold} of $X\cap (\A\setminus\{a\})^\V$. Note that $X\cap (\A\setminus\{a\})^\V$ is still an n.n.constraint space and is obtained by forbidding the symbol $a$ in $X$. Further if $X$ is invariant under a subgroup $Gr\subset Aut(\G)$ then $X\cap (\A\setminus\{a\})^\V$ is also invariant under $Gr$. Let $X_{a}$ denote the strong config-fold $X\cap (\A\setminus\{a\})^\V$. The idea of folding is captured by \eqref{equation:foldingdefn1} while \eqref{equation:foldingdefn2} and \eqref{equation:foldingdefn3} are reminiscent of homomorphism spaces. Indeed if an n.n.constraint space $X$ satisfies \eqref{equation:foldingdefn1} then for all $x\in X$ and $v\in \V$ such that $x_v=a$, the configuration $\theta^v_b(x)\in X$. Thus if $a$ strongly config-folds into $b$ then any appearance of $a$ in any configuration in $X$ can be replaced by $b$. Recall that a safe symbol can replace any other symbol. Thus the notion of strong config-folding generalises the notion of a safe symbol. This notion is stronger than the notion of config-folding as introduced in \cite[Section 5]{chandgotiafourcycle2015}.

\begin{prop}\label{prop:fold_generalise_fold}
Let $\G=(\V, \E1)$ be a bipartite graph and $\A$ be a finite set. Let $X\subset \A^\V$ be an n.n.constraint space with a safe symbol $\star$. Then any symbol $a\in \A\setminus\{\star\}$ can be strongly config-folded into $\star$. The resulting strong config-fold $X_a$ is also an n.n.constraint space with the same safe symbol $\star$.
\end{prop}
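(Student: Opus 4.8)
The plan is to read off the three folding conditions \ref{equation:foldingdefn1}, \ref{equation:foldingdefn2} and \ref{equation:foldingdefn3} directly from the definition of the safe symbol $\star$, and then to check separately that the fold $X_a$ inherits both the n.n.constraint structure and the safe symbol $\star$. The single observation I would isolate first is the following consequence of $\star$ being safe, valid whenever $X\neq\emptyset$: applying the safe symbol property with $A=\emptyset$ to any configuration of $X$ shows that the constant configuration $x^\star$ with $x^\star_v=\star$ for all $v\in\V$ belongs to $X$; and for any $v\in\V$ and any $c\in\B_{\{v\}}(X)$, applying the safe symbol property with $A=\{v\}$ to a configuration witnessing $c\in\B_{\{v\}}(X)$ produces some $y\in X$ with $y_v=c$ and $y_w=\star$ for every $w\neq v$. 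These ``mostly-$\star$'' configurations are exactly the witnesses needed.

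Now let $a\in\A\setminus\{\star\}$, let $(v_1,v_2),(v_2,v_3)\in\E1$ and $c\in\A$, and suppose $[a,c]_{\{v_1,v_2\}}\in\B_{\{v_1,v_2\}}(X)$; in particular $X\neq\emptyset$ and $c\in\B_{\{v_2\}}(X)$. Taking $y\in X$ as in the observation with this $v_2$ and $c$, we have $y_{v_1}=\star$, $y_{v_2}=c$, and $y_{v_3}=\star$ (here $v_3\neq v_2$ since $\G$ has no self loops); reading the pattern $[y]_{\{v_1,v_2\}}$ gives \ref{equation:foldingdefn1}, reading $[y]_{\{v_2,v_3\}}$ gives \ref{equation:foldingdefn2}, and the restriction of $x^\star\in X$ to $\partial D_1(v_1)$ is the all-$\star$ pattern there, giving \ref{equation:foldingdefn3}. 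Hence $a$ can be folded into $\star$. (If instead $X=\emptyset$ the hypothesis of the implication never holds, so folding is vacuous.)

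It remains to treat $X_a=X\cap(\A\setminus\{a\})^\V$. As already noted before the statement, $X_a$ is again an n.n.constraint space: writing $X=X_\F$ for a nearest neighbour constraint set $\F$, one has $X_a=X_{\F'}$ with $\F'=\F\cup\{[a]_{\{v\}}\:|\:v\in\V\}$, and $\F'$ is still a nearest neighbour constraint set since each $[a]_{\{v\}}$ is a pattern on a clique. To see that $\star$ is a safe symbol for $X_a$, fix $A\subset\V$ and $x\in X_a$; the safe symbol property of $X$ yields $y\in X$ with $y|_A=x|_A$ and $y_v=\star$ for $v\in A^c$. Since $x$ never takes the value $a$ and $\star\neq a$, the configuration $y$ never takes the value $a$ either, so $y\in X_a$, which is exactly what is required.

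The argument is essentially bookkeeping rather than difficult; the only point deserving attention is the recognition that conditions \ref{equation:foldingdefn2} and \ref{equation:foldingdefn3} — the ones described as ``reminiscent of homomorphism spaces'' — are witnessed precisely by a safe symbol's ability to be broadcast over all but one coordinate (and, for \ref{equation:foldingdefn3}, over every coordinate), together with keeping track of the degenerate case $X=\emptyset$.
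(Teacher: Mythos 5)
Your proof is correct; the paper disposes of this proposition with a one-line remark (that $X_a$ is obtained by forbidding $a$ from $X$ and that $\star$ is still safe), and your argument is precisely the straightforward verification it leaves implicit: broadcasting $\star$ over all but one coordinate (and over every coordinate) to witness conditions \ref{equation:foldingdefn1}--\ref{equation:foldingdefn3}, adding the single-site patterns $[a]_{\{v\}}$ to the constraint set for $X_a$, and observing that the configuration produced by the safe-symbol property never contains $a$, so it lies in $X_a$. No gaps.
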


Indeed $X_a$ is obtained just by forbidding the symbol $a$ from $X$ and $\star$ is still a safe symbol. In general it is not necessary that the symbol being strongly config-folded into has to be a safe symbol. For instance given any bipartite graph $\G$ the space $Hom(\G,C_4)$, can be strongly config-folded in two steps to $Hom(\G,Edge)$, yet $C_4$ does not have any safe symbol. Note that the strong config-unfold of an n.n.constraint space with a safe symbol need not have a safe symbol.
For example if $\H$ is the graph given by Figure \ref{figure:safe_symbol}
\begin{figure}[h]
\includegraphics[angle=0,
width=.1\textwidth]{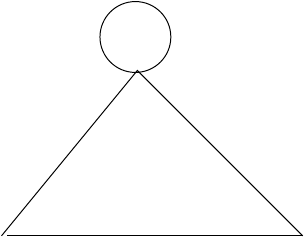}
\caption{Graph for a Space of Homomorphisms with a Safe Symbol\label{figure:safe_symbol}}
\end{figure}
then for any bipartite graph $\G$ the top vertex is a safe symbol in the space $Hom(\G, H)$.
\noindent However if we attach trees to $\H$ to obtain $\H^\prime$ given by Figure \ref{figure:fold_to_safe}
\begin{figure}[h]
\includegraphics[angle=0,
width=.2\textwidth]{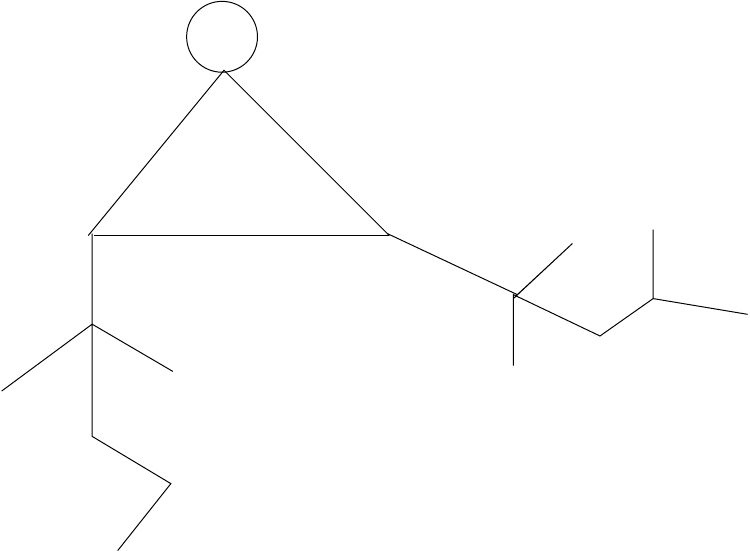}
\caption{Dismantlable Graph for a Space of Homomorphisms without a Safe Symbol\label{figure:fold_to_safe}}
\end{figure}
then $Hom(\G, \H^\prime)$ does not have any safe symbol but can be strongly config-folded into $Hom(\G, \H)$ by folding in the trees attached to $\H$.

Strong config-folding induces a natural map between the spaces of configurations and their cocycles as demonstrated by the following proposition.

\begin{prop} \label{proposition:induced_map_cocycles}
Let $\G=(\V,\E1)$ be a bipartite graph, $\A$ be a finite set and $Gr\subset Aut(\G)$ be a subgroup. Let $X\subset \A^\V$ be a $Gr$-invariant n.n.constraint space on $\G$ and $X_a$ be its strong config-fold for some $a\in \A$. Then the linear map $F: \M^{Gr}_X\longrightarrow \M^{Gr}_{X_a}$ given by $F(M):= M|_{\Delta_{X_a}}$ is surjective and $F(\Gi^{Gr}_X)=\Gi^{Gr}_{X_a}$.
\end{prop}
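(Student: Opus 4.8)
The plan is to construct a single \emph{folding retraction} of $X$ onto $X_a$ and then deduce all three assertions from its formal properties. Fix $b\in\A\setminus\{a\}$ with $a$ folded into $b$, and define $\pi\colon\A^\V\to\A^\V$ coordinatewise by $\pi(x)_v=b$ if $x_v=a$ and $\pi(x)_v=x_v$ otherwise. The one substantial point is that $\pi(X)\subseteq X$; since $\pi(x)$ never contains the symbol $a$, this gives $\pi(X)\subseteq X_a$. Because $X$ is an n.n.\ constraint space, it is enough to check that every pattern of $\pi(x)$ on a vertex or an edge lies in $\B(X)$, and here the bipartiteness of $\G$ is essential: writing $\V=P_1\cup P_2$ for the partite classes, I would change the $a$'s of $x$ lying in $P_1$ first, producing $x^{(1)}\in\A^\V$, and then those lying in $P_2$, producing $\pi(x)$. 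On an edge $\{v,w\}$ with $v\in P_1$, $w\in P_2$ (so $x^{(1)}_w=x_w$): if $x_v\neq a$ then $x^{(1)}|_{\{v,w\}}=x|_{\{v,w\}}\in\B_{\{v,w\}}(X)$; if $x_v=a$ then $[a,x_w]_{\{v,w\}}=x|_{\{v,w\}}\in\B_{\{v,w\}}(X)$, so equation (\ref{equation:foldingdefn1}), applied with $v_1=v$, $v_2=w$, $v_3=v$, gives $[b,x_w]_{\{v,w\}}\in\B_{\{v,w\}}(X)$; hence $x^{(1)}\in X$. On an edge $\{v,w\}$ with $v\in P_2$, $w\in P_1$ (so $\pi(x)_w=x^{(1)}_w$): if $x_v\neq a$ then $\pi(x)|_{\{v,w\}}=x^{(1)}|_{\{v,w\}}\in\B_{\{v,w\}}(X)$; if $x_v=a$ then $x^{(1)}_v=a$, so $[a,x^{(1)}_w]_{\{v,w\}}=x^{(1)}|_{\{v,w\}}\in\B_{\{v,w\}}(X)$, and equation (\ref{equation:foldingdefn2}), applied with $v_1=v$, $v_2=w$, $v_3=v$, gives $[x^{(1)}_w,b]_{\{w,v\}}=\pi(x)|_{\{w,v\}}\in\B_{\{w,v\}}(X)$; hence $\pi(x)\in X$. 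By construction $\pi$ is continuous, restricts to the identity on $X_a$, commutes with the $G$-action (it depends only on symbol values, not positions), sends every vertex- or edge-pattern of $X$ to a pattern of $X_a$, and maps $\Delta_X$ into $\Delta_{X_a}$ (if $x,y$ agree off a finite set $F$ so do $\pi(x),\pi(y)$).

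That $F$ is a well-defined linear map is immediate from $\Delta_{X_a}\subseteq\Delta_X$: restriction preserves the cocycle identity, the Markov property, and — since $X_a$ is $G$-invariant — $G$-invariance. For surjectivity, given $M_a\in\M^G_{X_a}$ set $M(x,y):=M_a(\pi(x),\pi(y))$ for $(x,y)\in\Delta_X$; this inherits the cocycle identity and $G$-invariance from $M_a$ via the $\Delta$-preservation and $G$-equivariance of $\pi$. To see $M$ is a Markov cocycle, let $F$ be the set where $x$ and $y$ differ and $F'\subseteq F$ the set where $\pi(x)$ and $\pi(y)$ differ; then $\partial F'\subseteq F\cup\partial F$, the set $F'$ is determined by $x|_F$ and $y|_F$, and $\pi(x)|_{F'\cup\partial F'}$, $\pi(y)|_{F'\cup\partial F'}$ are determined by $x|_{F'\cup\partial F'}$, $y|_{F'\cup\partial F'}$, so $M(x,y)$ is a function of $x|_{F\cup\partial F}$ and $y|_{F\cup\partial F}$ alone. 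Finally $\pi$ fixes $X_a$ pointwise, so $F(M)=M|_{\Delta_{X_a}}=M_a$ and $F$ is surjective.

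For the Gibbs statement, the inclusion $F(\Gi^G_X)\subseteq\Gi^G_{X_a}$ is routine: if $M\in\Gi^G_X$ is the Gibbs cocycle of a $G$-invariant nearest neighbour interaction $V$ on $X$, then $M|_{\Delta_{X_a}}$ is the Gibbs cocycle of the restriction $V|_{\B(X_a)}$, still nearest neighbour and, since $X_a$ is $G$-invariant, still $G$-invariant. For the reverse inclusion, given $N\in\Gi^G_{X_a}$ arising from a $G$-invariant nearest neighbour interaction $W$ on $X_a$, extend $W$ to an interaction $\widetilde W$ on $X$ by setting $\widetilde W(p):=W(\pi(p))$ when $p$ is supported on a vertex or an edge and $\widetilde W(p):=0$ otherwise; this is well defined because $\pi$ carries such patterns of $X$ into $\B(X_a)$, it is a nearest neighbour interaction by construction, and it is $G$-invariant because $\pi$ commutes with the $G$-action on patterns and $W$ is $G$-invariant. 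Its Gibbs cocycle $M$ on $X$ then lies in $\Gi^G_X$, and since $\pi$ fixes every vertex- and edge-pattern of $X_a$ we have $\widetilde W([x]_S)=W([x]_S)$ for all $x\in X_a$ and all cliques $S$, whence $M|_{\Delta_{X_a}}=N$. Thus $F(M)=N$, so $F(\Gi^G_X)=\Gi^G_{X_a}$.

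I expect the crux to be the inclusion $\pi(X)\subseteq X$, i.e.\ that replacing every occurrence of $a$ by $b$ simultaneously keeps a configuration in $X$. This is exactly the place where bipartiteness is used — it lets the replacement be performed over the two partite classes separately, each an independent set — and where the auxiliary folding condition (\ref{equation:foldingdefn2}), the one ``reminiscent of homomorphism spaces'', is needed in addition to (\ref{equation:foldingdefn1}); condition (\ref{equation:foldingdefn3}) is not used here. Once $\pi$ is in hand, the rest is routine manipulation of cocycles and interactions.
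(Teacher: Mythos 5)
Your proposal is correct and follows essentially the same route as the paper: the paper defines the same coordinatewise folding map $\phi$ (your $\pi$) from $X$ onto $X_a$, notes it commutes with $G$ and fixes $X_a$, and pulls back cocycles via $\phi^\star(M)(x,y)=M(\phi(x),\phi(y))$, checking the cocycle, Markov, $G$-invariance and Gibbs (interaction $V\circ\phi$) properties exactly as you do; your explicit verification that $\pi(X)\subseteq X$ merely fills in what the paper handles by its earlier remark that any appearance of $a$ may be replaced by $b$. One small caveat: bipartiteness is not actually essential there, since applying equation (\ref{equation:foldingdefn1}) twice (once in each coordinate of an edge) handles adjacent $a$'s directly, but this overstatement does not affect the correctness of your argument.
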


\begin{proof}

Let $a$ strongly config-fold into $b\in \A$. If $M\in \Gi^{Gr}_X$ then the restriction of the $Gr$-invariant nearest neighbour interaction for $M$ to $X_a$ gives us a $Gr$-invariant nearest neighbour interaction for $F(M)$ proving that $F(M)\in \Gi^{Gr}_{X_a}$. Thus $F(\Gi^{Gr}_X)\subset\Gi^{Gr}_{X_a}$. We will construct a map $\phi^\star: \M^{Gr}_{X_a}\longrightarrow \M^{Gr}_X$ such that $\phi^\star(\Gi^{Gr}_{X_a})\subset \Gi^{Gr}_{X}$ and $F\circ \phi^\star$ is the identity map on $\M^{Gr}_{X_a}$. Note that this is sufficient to conclude that $F$ is surjective and $F(\Gi^{Gr}_X)=\Gi^{Gr}_{X_a}$ thereby completing the proof.

The strong config-folding induces a mapping $\phi:X\longrightarrow X_a$ given by
\begin{equation*}
\phi(x)_v:=\begin{cases}x_v&\text{ if }x_v\neq a\\
b&\text{ if }x_v =a
\end{cases}
\end{equation*}•
for all $x\in X$ and $v\in \V$. Let $g\in Gr$ and $x\in X$. Then
$$(\phi(gx))_v=\begin{cases}
(gx)_v=x_{g^{-1}v}&\text{ if }x_{g^{-1}v}\neq a\\
b&\text{ if }(gx)_v=x_{g^{-1}v}=a
\end{cases}•$$
and
$$(g(\phi(x))_v=(\phi(x))_{g^{-1}v}=\begin{cases}
x_{g^{-1}v}&\text{ if }x_{g^{-1}v}\neq a\\
b&\text{ if }x_{g^{-1}v} =a.
\end{cases}•$$
Therefore $\phi$ commutes with the action of $Gr$. Note that $\phi|_{X_a}$ is the identity.

The map $\phi$ in turn induces a map between the cocycles which we shall now describe. Let $M \in \M_{X_a}^{Gr}$ be a Markov cocycle. Consider $M^\prime: \Delta_X\longrightarrow \R$ given by
$$M^\prime(x,y):= M(\phi(x), \phi(y)).$$
We will prove that $M^\prime\in \M^{Gr}_X$. 

\noindent\emph{Cocycle condition:} If $(x,y), (y,z)\in \Delta_X$ then
$$M^\prime(x,y)+ M^\prime(y,z)=M(\phi(x), \phi(y))+M(\phi(y), \phi(z))= M(\phi(x),\phi(z))= M^\prime(x,z).$$

\noindent\emph{Markov condition: } If $(x,y), (x',y')\in \Delta_X$ are Markov-similar then $(\phi(x), \phi(y)), (\phi(x'), \phi(y'))\in \Delta_{X_{a}}$ are Markov-similar as well implying $M(\phi(x),\phi(y))= M(\phi(x'), \phi(y'))$ and thus 
\begin{eqnarray*}
M^\prime(x, y)&=&M(\phi(x), \phi(y))\\
&=& M(\phi(x'), \phi(y'))\\
&=& M^\prime(x',y')
\end{eqnarray*}•
which verifies the Markov condition for $M^\prime$.

\noindent\emph{$Gr$-invariance condition: }
Since $\phi$ commutes with the action of $Gr$, for all $g\in Gr$
$$M^\prime(gx,gy)=M(\phi(gx), \phi(gy))=M(g(\phi(x)), g(\phi(y)))=M(\phi(x), \phi(y))= M^\prime(x,y).$$
Hence $M^\prime\in \M^{Gr}_X$. Moreover if $M\in \Gi^{Gr}_{X_a}$ with a $Gr$-invariant nearest neighbour interaction $V$, then for all $(x,y)\in \Delta_{X}$
$$M^\prime(x,y)=M(\phi(x),\phi(y))= \sum_{A\subset \V\text{ finite }}V([\phi(y)]_A)-V([\phi(x)]_A)$$
proving that $V\circ \phi$ is a $Gr$-invariant nearest neighbour interaction for $M^\prime$.

Thus the map $\phi^\star: \M^{Gr}_{X_a}\longrightarrow \M^{Gr}_{X}$ given by
$$\phi^\star(M)(x,y):=M(\phi(x), \phi(y))$$
satisfies $\phi^\star(\Gi^{Gr}_{X_a})\subset \Gi^{Gr}_{X}$. Moreover since $\phi|_{X_a}$ is the identity map on $X_a$ therefore $\phi^\star( M)|_{\Delta_{X_a}}=M$ for all $M\in \M^{Gr}_{X_a}$ proving $F\circ \phi^\star$ is the identity map on $\M_{X_a}$.
\end{proof}

Given a $Gr$-invariant topological Markov field $Y \subset X$ there is always a linear map $F:\M^{Gr}_X\longrightarrow \M^{Gr}_Y$ given by $F(M)=M|_{\Delta_{Y}}$ and $F(\Gi^{Gr}_X)\subset \Gi^{Gr}_Y$. However if $Y$ cannot be obtained by a sequence of strong config-folds starting with $X$, then this map need not be surjective. Indeed, consider the following example:

Let $\H$ be the graph given by Figure \ref{figure:safe_symbol} and fix $d\geq 2$. Let $X=Hom(\Z^d, \H)$ and $Y=Hom(\Z^d,C_3)$. Since there is a graph embedding from the 3-cycle $C_3$ to $\H$ it follows that $Hom(\Z^d,C_3)\subset Hom(\Z^d,\H)$. Let $\sigma$ denote the group of translations of the $\Z^d$ lattice. Since the top vertex of $\H$ is a safe symbol for $Hom(\Z^d,\H)$ it follows from the strong Hammersley-Clifford theorem (Theorem \ref{Theorem:Strong_Hammersley-Clifford}) that $\M^\sigma_X=\Gi^\sigma_X$. Therefore $F(\M^\sigma_X)\subset \Gi^\sigma_{Y}$. However by Proposition 5.3 in \cite{chandgotiameyerovitch}, $\Gi^\sigma_Y\subsetneq\M^\sigma_Y$. It follows that $F(\M^\sigma_X)\subsetneq \M^\sigma_{Y}$.

\section{The Main Results}\label{section: main results}
\begin{thm}\label{thm:main theorem}
Let $\G=(\V, \E1)$ be a bipartite graph and $X$ be a Hammersley-Clifford n.n.constraint space on $\G$. Then the strong config-folds and strong config-unfolds of $X$ are also Hammersley-Clifford.
\end{thm}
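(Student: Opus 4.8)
It suffices to treat a single fold: I will show that if $Y\subset\A^\V$ is an n.n.constraint space in which a symbol $a$ folds into $b$ and $X=Y_a$ is the corresponding fold, then $X$ is Hammersley-Clifford whenever $Y$ is, and $Y$ is Hammersley-Clifford whenever $X$ is; the theorem follows by iterating this over any sequence of folds. The basic tool is Proposition~\ref{proposition:induced_map_cocycles} (and its proof) applied with the trivial group: it provides a surjective linear map $F\colon\M_Y\to\M_X$, $M\mapsto M|_{\Delta_X}$, with $F(\Gi_Y)=\Gi_X$, together with a linear section $\phi^\star\colon\M_X\to\M_Y$ satisfying $\phi^\star(\Gi_X)\subseteq\Gi_Y$ and $F\circ\phi^\star=\mathrm{id}$. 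The fold direction is then immediate: if $\M_Y=\Gi_Y$ and $M\in\M_X$, then $\phi^\star(M)\in\M_Y=\Gi_Y$, so $M=F(\phi^\star(M))\in F(\Gi_Y)=\Gi_X$; hence $\M_X=\Gi_X$.

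For the unfold direction, assume $\M_X=\Gi_X$ and take $M\in\M_Y$. Then $F(M)\in\M_X=\Gi_X$, so $\phi^\star(F(M))\in\Gi_Y$, and $N:=M-\phi^\star(F(M))\in\M_Y$ satisfies $F(N)=F(M)-F(\phi^\star(F(M)))=F(M)-F(M)=0$; that is, $N$ vanishes on $\Delta_{Y_a}$. Hence the whole problem reduces to one claim: \emph{every Markov cocycle $N$ on $Y$ that vanishes on $\Delta_{Y_a}$ is a Gibbs cocycle with some nearest neighbour interaction}. Granting it, $M=N+\phi^\star(F(M))$ is a sum of two Gibbs cocycles with nearest neighbour interactions, so $M\in\Gi_Y$ and $Y$ is Hammersley-Clifford.

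To prove the claim I would construct the interaction by hand. First put $V'([x]_S)=0$ on every clique-pattern not involving the symbol $a$; this alone forces the Gibbs cocycle of $V'$ to vanish on $\Delta_{Y_a}$, matching $N$ there. It then remains to define $V'$ on the vertex pattern $[a]_v$ and on the edge patterns $[a,c]_{\{v,w\}}$, $[c,a]_{\{v,w\}}$, $[a,a]_{\{v,w\}}$, which I would obtain by interpolating $N$ along single-site changes between $a$ and $b$. By equation~\ref{equation:foldingdefn1}, $\theta^v_b(x)\in Y$ whenever $x_v=a$, so any $x\in Y$ is joined inside $Y$ to its image under $\phi$ (every $a$ turned into $b$) by a chain of one-site moves, and since $N$ is Markov each move contributes a value depending only on the pattern on $\{v\}\cup\partial\{v\}$; using the bipartition of $\G$ one can order the moves so that during each stage the neighbours of the flipped vertex carry fixed symbols, and $V'$ is read off from these local contributions. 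The main obstacle is to show this is well defined, i.e. that the local contribution at a flipped vertex splits as a vertex term plus a sum of edge terms and does not depend on the order of the flips; this is exactly where the hypothesis $N|_{\Delta_{Y_a}}\equiv 0$ is used, together with equations~\ref{equation:foldingdefn2} and~\ref{equation:foldingdefn3}, which furnish enough configurations carrying $b$ along a whole partite neighbourhood to decouple the contribution at a vertex from those at its neighbours. Once $V'$ is in place, Proposition~\ref{prop:V-goodness_equivalence_markov_similar} and Corollary~\ref{corollary:chain_of_V-goodness} upgrade ``$N$ is the Gibbs cocycle of $V'$ on single-site-flip pairs'' to ``on all of $\Delta_Y$'', because in an n.n.constraint space every asymptotic pair is linked by a chain of pairs each Markov-similar to one arising from a single legal site change.
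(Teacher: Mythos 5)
Your overall skeleton matches the paper's: the fold direction via Proposition~\ref{proposition:induced_map_cocycles} is exactly the paper's argument and is complete, and your normalisation $N=M-\phi^\star(F(M))$, reducing the unfold direction to the claim that a Markov cocycle vanishing on $\Delta_{X_a}$ is Gibbs with a nearest neighbour interaction, is a legitimate (and slightly cleaner) reformulation of what the paper does with a general interaction $V$ for $M|_{\Delta_{X_a}}$. However, the unfold direction is where essentially all the work of the theorem lies, and your proposal does not actually carry it out: you say you would define $V'$ on $[a]_v$, $[a,c]_{\{v,w\}}$ and $[a,a]_{\{v,w\}}$ by ``interpolating $N$ along single-site changes'' and then note that ``the main obstacle is to show this is well defined.'' Naming the obstacle is not the same as overcoming it. The increment $N(\theta^v_b(x),x)$ of a Markov cocycle at a single flip depends on the entire pattern on $\{v\}\cup\partial\{v\}$, and the whole difficulty is to show that these increments can be split consistently into one vertex term plus edge terms, independently of the order of flips and of which configuration realises a given local pattern, and compatibly on edges where \emph{both} endpoints carry $a$.

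In the paper this is precisely the content of Lemmas~\ref{lemma:four special configurations}, \ref{lemma:construction_of_V^prime} and \ref{lemma:from_particular_pairs_to_all_pairs}: one first builds, for each $v$ with $[a]_v$ allowed, a special configuration $x^v$ with $a$ at $v$ and $b$ on $D_2(v)\setminus\{v\}$ (or on $\partial D_1(v)$ when adjacent $a$'s are impossible at $v$), then gauges $V'([x^v]_{\{v,w\}})=0$ and reads off $V'([a]_v)$, $V'([a,c]_{\{v,w\}})$, $V'([a,a]_{\{v,w\}})$ from explicit rearrangements of the cocycle on a short list of basic pairs (with the $[a,a]$ case defined only for $v\in\V_1\cap P_1$, an asymmetry your sketch never confronts), and finally one proves by a delicate chaining argument -- removing isolated $a$'s first, then $a$'s adjacent to other $a$'s using the bipartition, then arbitrary asymptotic pairs -- that every pair in $\Delta_X$ is $V'$-good. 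Your appeal to equations~\ref{equation:foldingdefn2} and \ref{equation:foldingdefn3} and to Proposition~\ref{prop:V-goodness_equivalence_markov_similar} and Corollary~\ref{corollary:chain_of_V-goodness} points at the right tools, but without the explicit construction of the special configurations, the actual formulas (or at least a well-definedness argument) for $V'$ on the $a$-patterns, and the case analysis for adjacent $a$'s, the central claim remains an assertion, so the proposal as written has a genuine gap in the unfold direction.
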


The $Gr$-invariant version of Theorem \ref{thm:main theorem} holds as well.
\begin{thm}\label{thm:G-invariant main theorem}
Let $\G=(\V, \E1)$ be a bipartite graph, $Gr\subset Aut(\G)$ be a subgroup and $X$ be a $Gr$-Hammersley-Clifford n.n.constraint space on $\G$. Then the strong config-folds and strong config-unfolds of $X$ are also $Gr$-Hammersley-Clifford.
\end{thm}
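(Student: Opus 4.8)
The plan is to reduce the theorem, one folding step at a time, to Proposition \ref{proposition:induced_map_cocycles} together with a construction that extends a Gibbs interaction from a fold to an unfold. Since every fold and every unfold of $X$ is reached by a finite chain of single-symbol folds, fix a $G$-invariant n.n.constraint space $X'\subset\A^\V$, a symbol $a$ folding into a symbol $b$, and let $X:=X'_a$ be the fold. By Proposition \ref{proposition:induced_map_cocycles} the restriction $F:\M^G_{X'}\to\M^G_X$ is surjective, $F(\Gi^G_{X'})=\Gi^G_X$, and there is a section $\phi^\star:\M^G_X\to\M^G_{X'}$ with $F\circ\phi^\star=\mathrm{id}$ and $\phi^\star(\Gi^G_X)\subset\Gi^G_{X'}$. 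The fold direction is then immediate: if $\M^G_{X'}=\Gi^G_{X'}$ then $\M^G_X=F(\M^G_{X'})=F(\Gi^G_{X'})=\Gi^G_X$.

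For the unfold direction assume $X$ is $G$-Hammersley-Clifford and take $M'\in\M^G_{X'}$. Write $M'=\phi^\star(F(M'))+N'$ with $N':=M'-\phi^\star(F(M'))$. Since $F(M')\in\M^G_X=\Gi^G_X$ by hypothesis, $\phi^\star(F(M'))\in\Gi^G_{X'}$; as $\Gi^G_{X'}$ is a vector space it suffices to prove $N'\in\Gi^G_{X'}$. By construction $F(N')=0$, so $N'$ is a $G$-invariant Markov cocycle on $X'$ vanishing on $\Delta_X$ (on asymptotic pairs whose configurations avoid $a$); also $\phi^\star(F(M'))$ vanishes on every single-site $a\to b$ flip, so on such flips $N'$ agrees with $M'$.

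I would construct a $G$-invariant nearest neighbour interaction $W$ on $X'$, supported on patterns containing the symbol $a$, whose Gibbs cocycle is $N'$; this mirrors the Möbius-inversion construction in the classical Hammersley-Clifford theorem, with $a$ playing, against $b$, the role of the safe symbol. For a vertex $v$ and a pattern $q$ on $\partial\{v\}$ that extends to some $x'\in X'$ with $x'_v=a$, equation \ref{equation:foldingdefn1} and Proposition \ref{prop:changing_remain_nnconstraint} make the flip $\theta^v_b(x')$ legal, so $\psi_v(q):=N'(x',\theta^v_b(x'))$ is well defined by the Markov property. Bipartiteness is what forces $\psi_v$ to decompose over the neighbours of $v$: for $w_1\ne w_2$ in $\partial\{v\}$ one has $w_1\nsim w_2$, so the cocycle identity and the Markov property express the increment of $\psi_v$ in the $w_2$-coordinate as a difference of two single-site-flip increments of $M'$ at $w_2$, each depending only on $D_1(w_2)$, a set not containing $w_1$; hence $\psi_v$ carries no second-order interaction, and a decomposition lemma (for which the folding axioms \ref{equation:foldingdefn1}, \ref{equation:foldingdefn2}, \ref{equation:foldingdefn3} are used to supply enough admissible patterns) gives $\psi_v(q)=c_v+\sum_{w\sim v}h_{v,w}(q_w)$. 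Reading off $c_v$ and $h_{v,w}$ as weights on the $a$-patterns $[a]_v$ and $[a,c]_{\{v,w\}}$ yields $W$; mutual consistency of the weights chosen at adjacent vertices is exactly the cocycle identity of $N'$ around the square of the four flips at $v$ and $w$. All the data ($N'$, $\phi$, the canonical decomposition) being $G$-equivariant, $W$ is $G$-invariant.

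Finally I must check that $N'$ is the Gibbs cocycle of $W$. By Proposition \ref{prop:V-goodness_equivalence_markov_similar} and Corollary \ref{corollary:chain_of_V-goodness} it is enough that every $(x',y')\in\Delta_{X'}$ with difference set $F$ be joined to $y'$ by the chain $x'\to\phi_{F\cup\partial F}(x')\to\phi_{F\cup\partial F}(y')\to y'$, where $\phi_{F\cup\partial F}$ replaces $a$ by $b$ on $F\cup\partial F$. The first and last legs split into single-site $a\to b$ flips, each Markov-similar to a canonical flip that $W$ was built to make $W$-good; the middle leg is Markov-similar to an asymptotic pair in $\Delta_X$ — this uses $\B_{F\cup\partial F}(X)=\B_{F\cup\partial F}(X')\cap(\A\setminus\{a\})^{F\cup\partial F}$ (from the fold map $\phi:X'\to X$) together with the topological Markov field property of $X$ — on which both $N'$ and the Gibbs cocycle of $W$ vanish (the latter because $W$ is supported on patterns containing $a$). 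Concatenating, $(x',y')$ is $W$-good, so $N'\in\Gi^G_{X'}$, completing the proof; Theorem \ref{thm:main theorem} is the case $G=\{\mathrm{id}\}$ of this argument. \textbf{The main obstacle} is the construction step: proving the decomposition of $\psi_v$ and the consistency of the weights — this is where bipartiteness is essential and is the bipartite analogue of the combinatorial core of the Hammersley-Clifford theorem.
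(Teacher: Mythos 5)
Your overall architecture coincides with the paper's: the fold direction is exactly the paper's use of Proposition \ref{proposition:induced_map_cocycles}, and for the unfold direction your ``decomposition lemma'' for $\psi_v$ (flip values at configurations with controlled surroundings, telescoped one boundary site at a time using bipartiteness and the folding axioms, then a Markov-similarity chain to reach all asymptotic pairs) is precisely the content of the paper's Lemmas \ref{lemma:four special configurations}, \ref{lemma:construction_of_V^prime} and \ref{lemma:from_particular_pairs_to_all_pairs}; your bookkeeping via $M'=\phi^\star(F(M'))+N'$ with $W$ supported on $a$-patterns is only a cosmetic repackaging of the paper's $V'$ with $V'|_{\B(X_a)}=V$ and the normalisation $V'([x^v]_{\{v,w\}})=0$. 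Your heuristic for why $\psi_v$ has no second-order terms is essentially right, and the point you flag as the main obstacle is indeed where the paper spends its effort (including the existence of the special configurations $x^v$, the $\V_1$/$\V_2$ dichotomy, and the $[a,a]$-edge case).

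The genuine gap is in the $G$-invariance step, which is the whole point of this theorem as opposed to Theorem \ref{thm:main theorem}. The decomposition $\psi_v(q)=c_v+\sum_{w\sim v}h_{v,w}(q_w)$ is \emph{not} canonical: constants can be shifted between $c_v$ and the $h_{v,w}$, so ``reading off'' $W([a]_v)$ and $W([a,c]_{\{v,w\}})$ requires a gauge choice, namely a reference symbol at each $w\sim v$ at which $h_{v,w}$ is normalised to $0$. You cannot always take this reference to be $b$, since $[a,b]_{\{v,w\}}$ need not lie in the language (it does when some $[a,a]$ edge is legal at $v$, but not in general -- this is exactly the paper's split into $\V_1$ and $\V_2$ and its choice of symbols $c_{v,w}$). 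Hence asserting that ``the canonical decomposition is $G$-equivariant'' does not suffice: one must make these reference choices equivariantly on $G$-orbits (the paper's Lemma \ref{lemma:G_invariant_four_special_configurations}, which builds $x^v$ with $(gx^v)|_{gD_2(v)}=x^{gv}|_{gD_2(v)}$) and then invoke a uniqueness statement for the normalised interaction (the uniqueness clause of Lemma \ref{lemma:construction_of_V^prime}) to conclude $gW=W$; alternatively one could average the weights over all admissible reference symbols, but some such argument must be supplied. (Averaging over $G$ itself, as in the finite-graph remark preceding Theorem \ref{thm:extension of a Gibbs cocycle}, is not available for infinite $G$.) With that equivariant normalisation and uniqueness added, your plan completes to the paper's proof.
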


We know that all frozen spaces of configurations are $Gr$-Hammersley-Clifford for all subgroups $Gr\subset Aut(\G)$. We can construct many more examples of Hammersley-Clifford spaces by using these theorems.

\begin{enumerate}

\item
N.N.Constraint space with a safe symbol and dismantlable graphs.

By Proposition \ref{prop:fold_generalise_fold} starting with an n.n.constraint space with a safe symbol $\star$ we can strongly config-fold all the symbols one by one into the symbol $\star$ resulting in $\{\star\}^\V$ which is frozen. Thus these theorems generalise Theorem \ref{Theorem:Strong_Hammersley-Clifford} in the case when $\G$ is a bipartite graph. Furthermore any configuration space which can be strongly config-folded into a space with a safe symbol is still Hammersley-Clifford. For instance given the graph $\H^\prime$ in Figure \ref{figure:fold_to_safe}, even though $Hom(\G, \H^\prime)$ does not have any safe symbol, it is $Gr$-Hammersley-Clifford for any subgroup $Gr\subset Aut(\G)$.

More generally a graph $\H$ is called \emph{dismantlable} if there exists a sequence of folds on the graph leading to a single vertex (with or without a self-loop). By these theorems, if $\H$ is dismantlable then $Hom(\G, \H)$ is $Gr$-Hammersley-Clifford for any subgroup $Gr\subset Aut(\G)$. This provides a large class of examples. Consider for instance the space $Hom(\G, \H_{n,m})$ where $\H_{n,m}$ is a graph with vertices $\V_{\H_{n,m}}:=\{1, 2, \ldots, n\}$ and edges given by $(i,j)\in \E1_{\H_{n,m}}$ if and only if $|i-j|\leq m$.\\
\begin{figure}[h]
\includegraphics[angle=0,
width=.4\textwidth]{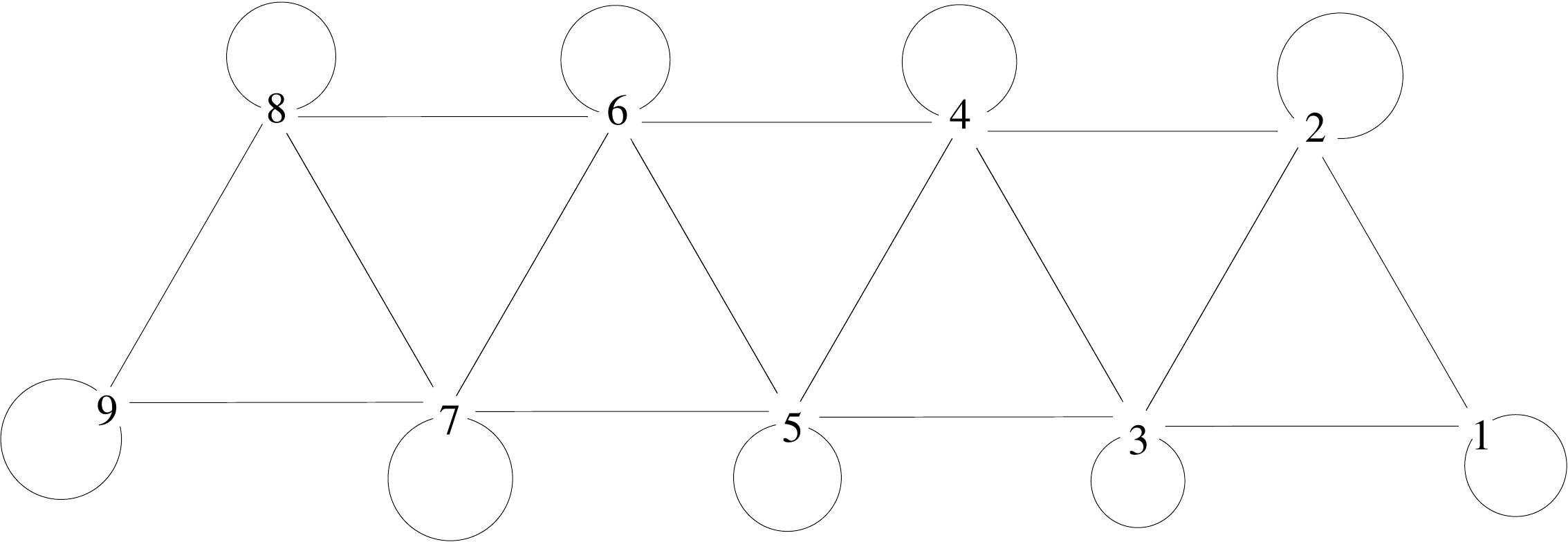}
\caption{$\H_{9,2}$}
\end{figure}

The sequence of folds $1$ to $2$, $2$ to $3$, $3$ to $4$, \ldots, $n-1$ to $n$ yields the space $\{n\}^{\G}$ from $Hom(\G, \H_{n,m})$ proving that it is $Gr$-Hammersley-Clifford for any subgroup $Gr\subset Aut(\G)$.

\item
$Hom(\G,Edge)$ where $Edge$ consists of two vertices $0$ and $1$ connected by a single edge.

By these theorems a configuration space which can be strongly config-folded into $Hom(\G,Edge)$ is still Hammersley-Clifford. For example if $\H$ is the graph given by Figure \ref{figure:foldtoedge}
\begin{figure}[h]
\includegraphics[angle=0,
width=.2\textwidth]{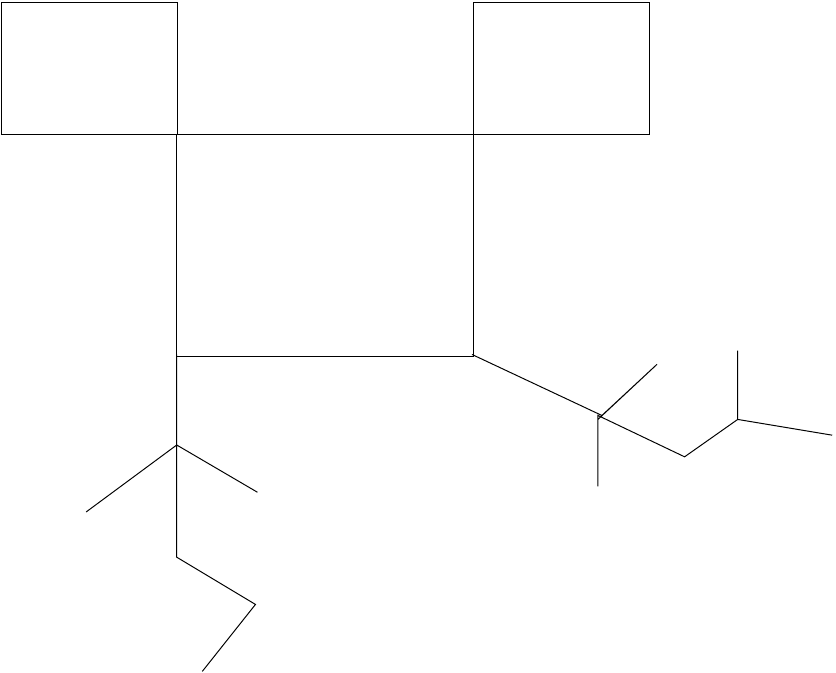}
\caption{A Graph which Folds to a Single Edge \label{figure:foldtoedge}}
\end{figure}
then it can be folded to the graph isomorphic to $Edge$ and hence $Hom(\G, \H)$ is $Gr$-Hammersley-Clifford for any subgroup $Gr\subset Aut(\G)$.
\end{enumerate}

Note that although these are homomorphism spaces, the theorems are true in the general setting of configuration spaces. These specific examples have been chosen for convenience.

\subsection{A Concrete Example:}\label{subsection:concrete}
We will first work out the following example to illustrate the key ideas of the proof. 

Suppose $\H$ and $\H^\prime$ are graphs given by Figure \ref{figure:Hprime}.
\begin{figure}[h]
\includegraphics[angle=0,
width=.4\textwidth]{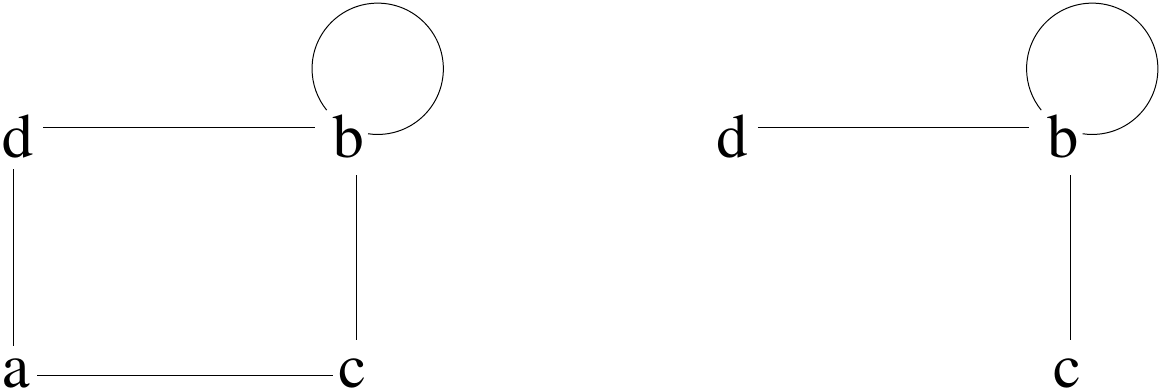}
\caption{Graphs $\H$ and $\H^\prime$\label{figure:Hprime}}
\end{figure}
Let $X= Hom(\Z^2, \H)$. Then by folding the vertex $a$ into the vertex $b$ we obtain the space $X_a=Hom(\Z^2, \H^\prime)$.

Note that $X$ does not have any safe symbol but $b$ is a safe symbol for $X_a$. Let $\sigma\subset Aut(\Z^2)$ denote the subgroup of all translations of $\Z^2$. By the strong Hammersley-Clifford theorem (Theorem \ref{Theorem:Strong_Hammersley-Clifford}) $X_a$ is $\sigma$-Hammersley-Clifford. We will prove that $X$ is $\sigma$-Hammersley-Clifford.

Let $M\in \M^\sigma_X$ be a $\sigma$-invariant Gibbs cocycle. Then $M|_{\Delta_{X_a}}$ is a $\sigma$-invariant Markov cocycle on $X_a$ and hence a Gibbs cocycle with some $\sigma$-invariant nearest neighbour interaction, which we will call $V$.

Given $e, f,g, h, i\in \V_\H$ and $v\in \Z^2$ let $ \left\l\begin{smallmatrix}&e&\\f&g&h\\&i&\end{smallmatrix}\right\r^v$ denote the configuration
$$\left\l\begin{smallmatrix}&e&\\f&g&h\\&i&\end{smallmatrix}\right\r^v:=\begin{cases}
g&\text{ if }u=v\\
e&\text{ if }u=v+(0,1)\\
f&\text{ if }u=v-(1,0)\\
h&\text{ if }u=v+(1,0)\\
i&\text{ if }u=v-(0,1)\\
b&\text{ if }u\in D_1(v)^c.
\end{cases}$$
For all $v\in \Z^2$ consider $x^v:= \left\l\begin{smallmatrix}&d&\\d&a&d\\&d&\end{smallmatrix}\right\r^v \in X_\H$. Consider a $\sigma$-invariant nearest neighbour interaction $V^\prime$ as follows:
\begin{enumerate}
\item If $v \sim_{\Z^2} w \in \Z^2$, $\l e,f\r_{\{v,w\}}\in \B_{\{v,w\}}(X_a)$ then
\begin{eqnarray}
V^\prime(\l e,f\r_{\{v,w\}})&=&V(\l e,f\r_{\{v,w\}})\text{ and}\nonumber\\
V^\prime(\l e\r_v)&=&V(\l e\r_v).\label{equation:1_concrete_interaction}
\end{eqnarray}•
\item
The interaction between $a$ and $d$ is $0$, that is, for all $v \sim_{\Z^2} w \in \Z^2$
\begin{eqnarray}
V^\prime(\l a,d\r_{\{v,w\}})=0.\label{equation:2_concrete_interaction}
\end{eqnarray}•
\item
The single site interaction for $\l a\r_v$ for all $v\in\Z^2$ is given by
\begin{eqnarray*}
V^\prime(\l a\r_v)&=&M\left( \left\l \begin{smallmatrix}&d&\\d&b&d\\&d&\end{smallmatrix}\right\r^v, \left\l\begin{smallmatrix}&d&\\d&a&d\\&d&\end{smallmatrix}\right\r^v\right)+V(\l b\r_v)+V(\l b,d\r_{\{v,v+(1,0)\}})+V(\l b,d\r_{\{v,v-(1,0)\}})\\
&&+V(\l b,d\r_{\{v,v+(0,1)\}})+V(\l b,d\r_{\{v,v-(0,1)\}}).
\end{eqnarray*}
By \eqref{equation:1_concrete_interaction} and \eqref{equation:2_concrete_interaction} this implies that the pair $ \left(\left\l\begin{smallmatrix}&d&\\d&b&d\\&d&\end{smallmatrix}\right\r^v, \left\l\begin{smallmatrix}&d&\\d&a&d\\&d&\end{smallmatrix}\right\r^v\right)$ is $V^\prime$-good.
\item
Let
\begin{eqnarray*}
V^\prime(\l a,c\r_{\{v, v+(1,0)\}})&=&M\left( \left\l\begin{smallmatrix}&d&\\d&a&d\\&d&\end{smallmatrix}\right\r^v, \left\l\begin{smallmatrix}&d&\\d&a&c\\&d&\end{smallmatrix}\right\r^v\right)+V(\l d\r_{v+(1,0)})-V(\l c\r_{v+(1,0)})\\
&&+V(\l d,b\r_{\{v+(1,0), v+(1,1)\}})+V(\l d,b\r_{\{v+(1,0), v+(2,0)\}})\\
&&+V(\l d,b\r_{\{v+(1,0), v+(1,-1)\}})-V(\l c,b\r_{\{v+(1,0), v+(1,1)\}})\\
&&-V(\l c,b\r_{\{v+(1,0), v+(2,0)\}})-V(\l c,b\r_{\{v+(1,0), v+(1,-1)\}}).
\end{eqnarray*}•
By \eqref{equation:1_concrete_interaction} and \eqref{equation:2_concrete_interaction} the previous equation implies that the pair $ \left(\left\l\begin{smallmatrix}&d&\\d&a&d\\&d&\end{smallmatrix}\right\r^v, \left\l\begin{smallmatrix}&d&\\d&a&c\\&d&\end{smallmatrix}\right\r^v\right)$ is $V^\prime$-good. Similarly we can define $V^\prime(\l a,c\r_{\{v,v-(1,0)\}})$, $V^\prime(\l a,c\r_{\{v,v+(0,1)\}})$ and $V^\prime(\l a,c\r_{\{v,v-(0,1)\}})$, the corresponding expressions of which will imply that the pairs $ \left(\left\l\begin{smallmatrix}&d&\\d&a&d\\&d&\end{smallmatrix}\right\r^v, \left[\begin{smallmatrix}&d&\\c&a&d\\&d&\end{smallmatrix}\right]^v\right)$, \\$ \left(\left\l\begin{smallmatrix}&d&\\d&a&d\\&d&\end{smallmatrix}\right\r^v, \left\l\begin{smallmatrix}&c&\\d&a&d\\&d&\end{smallmatrix}\right\r^v\right)$, $ \left(\left\l\begin{smallmatrix}&d&\\d&a&d\\&d&\end{smallmatrix}\right\r^v, \left\l\begin{smallmatrix}&d&\\d&a&d\\&c&\end{smallmatrix}\right\r^v\right)$
are $V^\prime$-good.
\end{enumerate}•

Since $V$ and $M$ are $\sigma$-invariant it follows that $V^\prime$ is also $\sigma$-invariant. We want to prove that $V^\prime$ is an interaction for $M$. Equivalently we want to prove that all asymptotic pairs are $V^\prime$-good. Let $(x,y)\in \Delta_X$. Since any appearance of $a$ in the configurations in $X$ can be replaced by $b$, by replacing all the $a$'s outside the set of sites where $x$ and $y$ differ and its boundary we can obtain a pair $(x^1,y^1)\in \Delta_{X}$ which is Markov-similar to $(x,y)$ and has finitely many $a$'s. Thus by Proposition \ref{prop:V-goodness_equivalence_markov_similar} it is sufficient to prove that pairs $(x,y)\in \Delta_X$ with finitely many $a$'s are $V^\prime$-good. Since the $a$'s can be replaced by $b$'s one by one and any pair in $\Delta_{X_a}$ is $V^\prime$-good by Corollary \ref{corollary:chain_of_V-goodness} it is sufficient to prove that pairs in $X$ in which a single $a$ is replaced by $b$ are $V^\prime$-good. Since $a$ can be folded into $b$ and $\partial\{a\}=\{c,d\}$ any such pair is Markov-similar to a pair $\left(\left\l\begin{smallmatrix}&e&\\f&a&h\\&i&\end{smallmatrix}\right\r^v,\left\l\begin{smallmatrix}&e&\\f&b&h\\&i&\end{smallmatrix}\right\r^v\right)$ for some $v\in \Z^2$ and $e, f,g, h, i\in \{c,d\}$.

The pairs
$$\left(\left\l\begin{smallmatrix}&e&\\f&a&h\\&i&\end{smallmatrix}\right\r^v,\left\l\begin{smallmatrix}&d&\\f&a&h\\&i&\end{smallmatrix}\right\r^v\right), \left(\left\l\begin{smallmatrix}&d&\\f&a&h\\&i&\end{smallmatrix}\right\r^v,\left\l\begin{smallmatrix}&d&\\d&a&h\\&i&\end{smallmatrix}\right\r^v\right),\left(\left\l\begin{smallmatrix}&d&\\d&a&h\\&i&\end{smallmatrix}\right\r^v,\left\l\begin{smallmatrix}&d&\\d&a&d\\&i&\end{smallmatrix}\right\r^v\right),\left(\left\l\begin{smallmatrix}&d&\\d&a&d\\&i&\end{smallmatrix}\right\r^v,\left\l\begin{smallmatrix}&d&\\d&a&d\\&d&\end{smallmatrix}\right\r^v\right)$$
are Markov-similar to
$$\left(\left\l\begin{smallmatrix}&e&\\d&a&d\\&d&\end{smallmatrix}\right\r^v,\left\l\begin{smallmatrix}&d&\\d&a&d\\&d&\end{smallmatrix}\right\r^v\right), \left(\left\l\begin{smallmatrix}&d&\\f&a&d\\&d&\end{smallmatrix}\right\r^v,\left\l\begin{smallmatrix}&d&\\d&a&d\\&d&\end{smallmatrix}\right\r^v\right),\left(\left\l\begin{smallmatrix}&d&\\d&a&h\\&d&\end{smallmatrix}\right\r^v,\left\l\begin{smallmatrix}&d&\\d&a&d\\&d&\end{smallmatrix}\right\r^v\right),\left(\left\l\begin{smallmatrix}&d&\\d&a&d\\&i&\end{smallmatrix}\right\r^v,\left\l\begin{smallmatrix}&d&\\d&a&d\\&d&\end{smallmatrix}\right\r^v\right)$$ 
respectively. Since $e, f,g, h, i\in \{c,d\}$, these pairs are $V^\prime$-good. Thus each adjacent pair in the chain
$$\left\l\begin{smallmatrix}&e&\\f&a&h\\&i&\end{smallmatrix}\right\r^v,\left\l\begin{smallmatrix}&d&\\f&a&h\\&i&\end{smallmatrix}\right\r^v,\left\l\begin{smallmatrix}&d&\\d&a&h\\&i&\end{smallmatrix}\right\r^v,\left\l\begin{smallmatrix}&d&\\d&a&d\\&i&\end{smallmatrix}\right\r^v,\left\l\begin{smallmatrix}&d&\\d&a&d\\&d&\end{smallmatrix}\right\r^v,
\left\l\begin{smallmatrix}&d&\\d&b&d\\&d&\end{smallmatrix}\right\r^v,
\left\l\begin{smallmatrix}&e&\\f&b&h\\&i&\end{smallmatrix}\right\r^v$$
is $V^\prime$-good. By Corollary \ref{corollary:chain_of_V-goodness} the pair $\left(\left\l\begin{smallmatrix}&e&\\f&a&h\\&i&\end{smallmatrix}\right\r^v,\left\l\begin{smallmatrix}&e&\\f&b&h\\&i&\end{smallmatrix}\right\r^v\right)$ is $V^\prime$-good. This completes the proof.

\subsection{Proof of Theorems \ref{thm:main theorem} and \ref{thm:G-invariant main theorem}}

We will now prove Theorems \ref{thm:main theorem} and \ref{thm:G-invariant main theorem}. The proof will give an explicit way of computing the interaction as well. It should also be noted that Theorem \ref{thm:main theorem} is a special case of Theorem \ref{thm:G-invariant main theorem}. Yet we separate the proofs for readability. In the rest of the paper we will denote the adjacency relation in the graph $\G$ by $\sim$ instead of $\sim_\G$. 

\

\noindent\emph{Proof of Theorem \ref{thm:main theorem}.} The bulk of the proof lies in showing that the strong config-unfolds of Hammersley-Clifford spaces are Hammersley-Clifford. We will first prove that the strong config-folds of a Hammersley-Clifford space are Hammersley-Clifford. Let $X\subset \A^\V$ be Hammersley-Clifford and $X_a$ be its strong config-fold. Using Proposition \ref{proposition:induced_map_cocycles} in the case where $Gr=\{id|_\G\}$ we obtain a surjective map $F:\M_X\longrightarrow \M_{X_a}$ such that $F(\Gi_X)=\Gi_{X_a}$. Since $X$ is Hammersley-Clifford, $\M_X=\Gi_X$. Hence
$$\M_{X_a}=F(\M_X)=F(\Gi_X)=\Gi_{X_a}$$
proving that $X_a$ is Hammersley-Clifford.

Now we will prove that strong config-unfolds of Hammersley-Clifford spaces are Hammersley-Clifford spaces as well. Let $X\subset \A^\V$ be an n.n.constraint space and $X_a $ be a strong config-fold of $X$ where $a$ is strongly config-folded into $b$. Let the set of nearest neighbour constraints of $X$ be given by the set $\F_X$. Suppose $X_a$ is Hammersley-Clifford.

Let $M\in \M_X$ be a Markov cocycle. Since $X_a$ is Hammersley-Clifford $M|_{\Delta_{X_a}}\in \Gi_{X_a}$. Let $V$ be a corresponding nearest neighbour interaction. We shall now construct a nearest neighbour interaction $V^\prime$ for $M$. The idea is the following:

Since we have a nearest neighbour interaction for $M|_{\Delta_{X_a}}$ we will change asymptotic pairs in $X$ to asymptotic pairs in $X_a$ using the fewest possible distinct single site changes. These distinct single site changes will correspond to patterns on edges and vertices helping us build $V^\prime$. If we use the single site changes which involve blindly changing the $a$'s into $b$'s we will incur a large number of such changes; hence we have to choose these changes carefully.

\begin{lemma}[Construction of special configurations]\label{lemma:four special configurations}
Let $\G=(\V, \E1)$ be a bipartite graph, $\A$ be a finite set, $X\subset \A^\V$ be an n.n.constraint space on $\G$ and $X_a$ be a strong config-fold of $X$ where the symbol $a$ is strongly config-folded into the symbol $b$. Let
$$\V_1:=\left\{v\in \V\:|\: \text{there exists } w\sim v\text{ such that }\l a,a\r_{\{v,w\}}\in \B_{\{v,w\}}(X)\right\}$$
and
$$\V_2:=\left\{v\in \V\setminus \V_1\:|\: \l a\r_{v}\in \B_{\{v\}}(X)\right\}.$$
For all $v\in \V_1\cup\V_2$ there exists $x^v\in X$ such that
\begin{enumerate}
\item\label{enumerate:configuration1}
If $v\in \V_1$ then $x^v_v=a$ and $x^v|_{D_2(v)\setminus \{v\}}=b^{D_2(v)\setminus \{v\}}$.
\item\label{enumerate:configuration2}
If $v\in \V_2$ then $x^v_{v}=a$ and $x^v|_{\partial D_1(v)}=b^{\partial D_1(v)}$.
\end{enumerate}
Moreover $\theta^v_b(x^v)\in X_a$ and if $w_1, w_2, w_3,\ldots w_r\sim v$ and $c_1, c_2,\ldots,c_r\in\A$ such that $\l a,c_i\r_{\{v,w_i\}}\in \B_{\{v,w_i\}}(X)$ then $\theta^{w_1, w_2, \ldots, w_r}_{c_1, c_2,\ldots, c_r}(x^v)\in X$.
\end{lemma}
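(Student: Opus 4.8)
The plan is to build the configurations $x^v$ by hand, treating the two cases $v \in \V_1$ and $v \in \V_2$ separately but with the same underlying mechanism: start from a witness configuration in $X$ and repeatedly apply the folding hypothesis (equations \ref{equation:foldingdefn1}, \ref{equation:foldingdefn2}, \ref{equation:foldingdefn3}) together with proposition \ref{prop:changing_remain_nnconstraint} to overwrite everything near $v$ with the symbol $b$ while keeping $x^v_v = a$. For $v \in \V_1$, by definition there is $w \sim v$ with $[a,a]_{\{v,w\}} \in \B_{\{v,w\}}(X)$, so pick $x \in X$ with $x_v = x_w = a$. Since $a$ folds into $b$, equation \ref{equation:foldingdefn3} (applied along the edge $(w,v)$, say, so that $v$ plays the role of $v_1$) gives $[b]_{\partial D_1(w)} \in \B_{\partial D_1(w)}(X)$, and since $w \in \partial D_1(v)$ while also $v \in \partial D_1(w)$, this is exactly the kind of patch of $b$'s we need. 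The point is that $\partial D_1(w) \supset D_2(v) \setminus \{v\}$ is \emph{not} quite true in general, so I will need to be a little careful: I would rather argue that for \emph{every} edge $(v,w') \in \E1$, since $[a, x_{w'}]_{\{v,w'\}} \in \B_{\{v,w'\}}(X)$ and $[a,a]_{\{v,w\}}\in\B_{\{v,w\}}(X)$, one can use \ref{equation:foldingdefn3} at the edge $(v,w)$ (with $v = v_1$, $w = v_2$) to get $[b]_{\partial D_1(v)} \in \B$, and then \ref{equation:foldingdefn2} to push $b$ one step further onto $D_2(v)$.

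Concretely, here is the sequence I would carry out for $v \in \V_1$. (i) From $x$ with $x_v = x_w = a$, use \ref{equation:foldingdefn2} with the edges $(w,v)$ and $(v,w')$ for each $w' \sim v$: since $[a,a]_{\{w,v\}} \in \B$ means (reading $a = x_w$, $c = a$, i.e. $[a,c]_{\{v_1,v_2\}}$ with $v_1 = w$, $v_2 = v$), we get $[c,b]_{\{v,w'\}} = [a,b]_{\{v,w'\}} \in \B_{\{v,w'\}}(X)$, i.e. every neighbour of $v$ may carry $b$ while $v$ carries $a$. (ii) For each $w' \sim v$ and each $w'' \sim w'$, applying \ref{equation:foldingdefn2} again at edges $(v,w')$, $(w',w'')$ — using that $[a,b]_{\{v,w'\}} \in \B$ — we get $[b,b]_{\{w',w''\}} \in \B_{\{w',w''\}}(X)$. (iii) Applying \ref{equation:foldingdefn3} at $(v,w')$ gives $[b]_{\partial D_1(v)} \in \B$, and at $(w',w'')$ for appropriate $w''$ gives $[b]_{\partial D_1(w')} \in \B$ — collecting these over all $w' \sim v$ covers $D_2(v) \setminus \{v\}$. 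Then I use proposition \ref{prop:changing_remain_nnconstraint} (applied to the independent sets inside the relevant partite class, exploiting bipartiteness of $\G$) to assemble an actual configuration $x^v \in X$ realising $x^v_v = a$ and $x^v|_{D_2(v)\setminus\{v\}} = b$; the edge-constraints to check are exactly the ones verified in steps (i)–(iii). The case $v \in \V_2$ is easier: here $[a]_{\{v\}} \in \B$ but no neighbouring pair of $a$'s is forced, so starting from $x \in X$ with $x_v = a$, step (i) alone (every $w' \sim v$ may carry $b$) plus \ref{equation:foldingdefn3} at an edge incident to $v$ gives $[b]_{\partial D_1(v)} \in \B$, and proposition \ref{prop:changing_remain_nnconstraint} produces $x^v$ with $x^v_v = a$, $x^v|_{\partial D_1(v)} = b$.

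For the ``moreover'' clause: $\theta^v_b(x^v) \in X_a$ follows from equation \ref{equation:foldingdefn1} — replacing the lone $a$ at $v$ by $b$ is legal in $X$, and the result has no $a$'s in a neighbourhood of $v$ large enough (and $x^v$ itself is eventually... no, $x^v$ need not avoid $a$ elsewhere). Here I should be careful: $x^v$ might contain other $a$'s far from $v$. But the lemma only needs $\theta^v_b(x^v) \in X_a$, which requires $\theta^v_b(x^v)$ to contain \emph{no} $a$ at all. So actually I should build $x^v$ to agree with a fixed element of $X_a$ outside $D_2(v)$ — which is possible because $X_a$ is non-empty and I can use proposition \ref{prop:changing_remain_nnconstraint}/the topological-Markov-field property to glue; alternatively, start the construction from an element already in $X_a$ (suitably modified near $v$). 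I will take this route: fix $y \in X_a$ arbitrary, and modify $y$ on $D_1(v)$ (resp. $D_2(v)$) to install the single $a$ at $v$ surrounded by $b$'s, checking edge-constraints via the folding equations as above; then $x^v$ has its only $a$ at $v$, so $\theta^v_b(x^v) \in X_a$. Finally, if $w_1, \dots, w_r \sim v$ and $[a, c_i]_{\{v,w_i\}} \in \B_{\{v,w_i\}}(X)$, then since the $w_i$ lie in the partite class not containing $v$ they are pairwise non-adjacent, and $\theta^{w_1,\dots,w_r}_{c_1,\dots,c_r}(x^v) \in X$ is immediate from the $r=1$ reduction in proposition \ref{prop:changing_remain_nnconstraint}: each change at $w_i$ only needs the edge-pattern $[c_i, x^v_{w'}]$ to be allowed for $w' \sim w_i$, and $x^v_{w'} = b$ for $w' \in \partial D_1(v) \ni$ the neighbours of $w_i$ other than $v$ (by construction, since $w' \in D_2(v)\setminus\{v\}$), while the edge to $v$ itself carries $[c_i, a] = [c_i, x^v_v]$ which is allowed by hypothesis — and for $v \in \V_2$ the neighbours $w'$ of $w_i$ with $w' \neq v$ may not be in the controlled region, so I will only claim this part under the natural reading that it is needed for $v \in \V_1$ (where $D_2(v)$ is controlled), matching how the lemma is used downstream.

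\textbf{Main obstacle.} The technical heart is bookkeeping the neighbourhood sizes: equation \ref{equation:foldingdefn3} only delivers $b$ on $\partial D_1(v_1)$ for the \emph{specific} vertex $v_1$ appearing as the tail of an edge on which an $a$ sits, so to cover all of $D_2(v)\setminus\{v\}$ I must chain the folding equations through every neighbour of $v$, and verify that bipartiteness lets me invoke proposition \ref{prop:changing_remain_nnconstraint} simultaneously on each partite class. The second subtlety, ensuring $\theta^v_b(x^v)$ contains no stray $a$'s, is handled cleanly by building $x^v$ as a local modification of a fixed element of $X_a$ rather than of an arbitrary element of $X$.
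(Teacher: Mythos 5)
Your steps (i)--(ii) are exactly the right edge-level checks, but the assembly of $x^v$ --- the actual point of the lemma --- has a genuine gap. You propose to install the patch either by gluing it onto an arbitrary $y\in X_a$ or by running proposition \ref{prop:changing_remain_nnconstraint} one partite class at a time; neither is justified by the folding hypotheses. If you overwrite $y$ on $D_1(v)$ or $D_2(v)$, every edge crossing out of the modified region carries a pattern $[b,y_{u'}]$ (or $[x^1_{w'},y_{u'}]$) with $y_{u'}$ an arbitrary symbol of $y$, and equations \ref{equation:foldingdefn1}--\ref{equation:foldingdefn3} never assert that $b$ is compatible with arbitrary symbols --- equation \ref{equation:foldingdefn2} only makes $b$ compatible with symbols that can sit next to $a$. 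Likewise a class-by-class use of proposition \ref{prop:changing_remain_nnconstraint} forces intermediate compatibilities such as $[a,z_{w'}]$ or $[b,z_v]$ against the unknown old symbols of the ambient configuration. Your step (iii) claim $[b]_{\partial D_1(w')}\in\B_{\partial D_1(w')}(X)$ is also unjustified: equation \ref{equation:foldingdefn3} requires a pattern $[a,c]$ with $a$ sitting \emph{at} $w'$, and nothing guarantees $a$ can appear at a neighbour of $v$. The repair (which is the paper's construction) is to use \ref{equation:foldingdefn3} not as a bare language statement but to extract a witness $x\in X$ with $x|_{\partial D_1(v)}=b$, replace all its $a$'s by $b$'s (legal by \ref{equation:foldingdefn1}) so that $x\in X_a$, and then define $x^v$ by overwriting $x$ only inside $D_1(v)$ with the local patch; since every edge of $\G$ lies entirely in $D_2(v)$ or entirely in $D_1(v)^c$ and $X$ is an n.n.constraint space, membership of $x^v$ in $X$ is verified edge by edge using exactly your (i)--(ii), with no gluing lemma needed, and the $a$ at $v$ is then the only $a$ in $x^v$, so $\theta^v_b(x^v)\in X_a$.

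Two further points. For $v\in\V_2$ your step (i) (``every $w'\sim v$ may carry $b$'' while $v$ carries $a$) does not follow: \ref{equation:foldingdefn2} pushes $b$ one step away from the edge on which $a$ sits, it does not place $b$ adjacent to $a$ unless $[a,a]$ is allowed, which is precisely what fails on $\V_2$; the correct patch for $v\in\V_2$ keeps the symbols $x^1_{w'}$ of a witness of $[a]_v$ on $\partial\{v\}$ and sets $b$ only on $\partial D_1(v)$, which is all conclusion (\ref{enumerate:configuration2}) asks. Finally, restricting the ``moreover'' clause to $v\in\V_1$ leaves part of the statement unproved, and it is needed for all $v\in\V_1\cup\V_2$ downstream (lemma \ref{lemma:from_particular_pairs_to_all_pairs}); it does hold for $\V_2$, because by bipartiteness every neighbour of $w_i$ other than $v$ is at distance exactly $2$ from $v$, hence lies in $\partial D_1(v)$ where $x^v=b$, and \ref{equation:foldingdefn2} gives $[c_i,b]\in\B_{\{w_i,w'\}}(X)$ there, so proposition \ref{prop:changing_remain_nnconstraint} applies verbatim.
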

\begin{proof}
Let $v\in \V_1$. By \eqref{equation:foldingdefn2} $\l a,b\r_{\{v,w\}}\in \B_{\{v,w\}}(X)$ for all $w\sim v$. Again by \eqref{equation:foldingdefn2} it follows that $\l b,b\r_{\{w, w_1\}}\in \B_{\{w, w_1\}}(X)$ for all $w, w_1\in \V$ such that $w\sim v$ and $w_1\sim w$. Then none of the patterns from $\F_X$, the nearest neighbour constraint set for $X$ appear in $\alpha^v\in\A^{D_2(v)}$ given by
\begin{equation*}
\alpha^v_u:=\begin{cases}
a &\text{ if }u=v\\
b &\text{ if }u \in D_2(v)\setminus\{v\}.
\end{cases}•
\end{equation*}
For $v\in \V_2$ there exists $x^1\in X$ such that $x^1_v=a$. For all $w, w_1\in \V$ such that $w\sim v$ and $w_1\sim w$, \eqref{equation:foldingdefn2} implies that $\l x^1_w,b\r_{\{w,w_1\}}\in \B_{\{w,w_1\}}(X)$. Then none of the patterns from $\F_X$ appear in $\alpha^v\in \A^{D_2(v)}$ given by
\begin{equation*}
\alpha^v_u=\begin{cases}
x^1_u&\text{ if } u\in D_1(v)\\
b&\text{ if } u\in D_2(v)\setminus D_1(v).
\end{cases}•
\end{equation*}•

Fix $v\in \V_1\cup \V_2$. By \eqref{equation:foldingdefn3} there exists $x\in X$ such that $x|_{\partial D_1(v)}=b^{\partial D_1(v)}$. Moreover since $a$ strongly config-folds into $b$ we can assume that $x\in X_a$. Consider $x^v\in \A^\V$ given by
\begin{eqnarray*}
x^v_u:=\begin{cases}
\alpha^v_u \text{ if }u\in D_2(v)\\
x_u\text{ if } u \in D_1(v)^c.
\end{cases}•
\end{eqnarray*}•
The configurations $x^v$ satisfy the Conclusions (\ref{enumerate:configuration1}) and (\ref{enumerate:configuration2}) of this lemma. Since each edge in $\G$ either lies completely in $D_2(v)$ or in $D_1(v)^c$, no subpattern of $x^v$ belongs to $\F_X$. Therefore $x^v\in X$.

Let $v\in \V_1\cup \V_2$. Since $x\in X_a$, $a$ appears in $x^v$ only at $v$ . Moreover since $a$ strongly config-folds into $b$ by \eqref{equation:foldingdefn1}, $\theta^v_b(x^v)\in X_a$. Let $w_1, w_2, w_3,\ldots w_r\sim v$ and $c_1, c_2,\ldots,c_r\in\A$ such that $\l a,c_i\r_{\{v,w_i\}}\in \B_{\{v,w_i\}}(X)$ for all $1\leq i \leq r$. Because the graph is bipartite $w_i\nsim_\G w_j$ for all $1\leq i, j \leq r$. By \eqref{equation:foldingdefn2} for all $w^\prime\sim w_i$ and $1\leq i \leq r$, $\l c_i,b\r_{\{w_i,w^\p\}}\in \B_{\{w_i, w^\p\}}(X)$. By Proposition \ref{prop:changing_remain_nnconstraint} $\theta^{w_1, w_2, \ldots, w_r}_{c_1, c_2, \ldots, c_r}(x^v)\in X$.
\end{proof}

We will now construct an interaction via the following technical lemma.

\begin{lemma}[Construction of $V^\prime$]\label{lemma:construction_of_V^prime}
Let $\G=(\V, \E1)$ be a bipartite graph with partite classes $P_1$ and $P_2$, $\A$ be a finite set, $X\subset \A^\V$ be an n.n.constraint space and $X_a$ be a strong config-fold of $X$ where the symbol $a$ is strongly config-folded into the symbol $b$. Consider sets $\V_1, \V_2\subset \V$ and for all $v\in \V_1\cup \V_2$, configurations $x^v\in X$ satisfying the conclusions of Lemma \ref{lemma:four special configurations}. Let $M\in \M_X$ be a Markov cocycle on $X$ such that $M|_{\Delta_{X_a}}$ is a Gibbs cocycle with interaction $V$. Then there exists a unique nearest neighbour interaction $V^\prime$ on $X$ which satisfies:

\noindent If $v\sim w\in \V$ and $\l c,d\r_{\{v,w\}}\in \B_{\{v,w\}}(X_a)$ then
\begin{eqnarray}
V^\prime(\l c,d\r_{\{v,w\}})&=& V(\l c,d\r_{\{v,w\}})\text{ and}\label{equation:listinteract1}\\
V^\prime(\l c\r_{v})&=&V(\l c\r_{\{v\}}).\label{equation}\label{equation:listinteract2}
\end{eqnarray}
For $v\in \V_1\cup \V_2$ and $w\sim v$
\begin{eqnarray}
V^\prime( x^v|_{\{v,w\}})=0.\label{equation:listinteract3}
\end{eqnarray}
such that the following pairs are $V^\prime$-good:
\begin{enumerate}
\item
$(\tilde{x},\tilde{y}) \in \Delta_{X_a}$.
\label{item:from_X_a}
\item
$(\theta^v_b(x^v), x^v)$ for $v\in \V_1\cup \V_2$.
\label{item:changing_b_at_v}
\item
$ \left(\theta^w_c(x^v),x^v\right)$ for $v\in \V_1\cup \V_2$, $w\sim v$ and $c\in \A\setminus\{a\}$ satisfying $\l a,c\r_{\{v,w\}}\in \B_{\{v,w\}}(X)$.
\label{item:changing_at_boundary_no_a}
\item
$ (\theta^w_a(x^v),x^v)$ for all $v\in \V_1\cap P_1$, $w\sim v$ satisfying $\l a,a\r_{\{v,w\}}\in \B_{\{v,w\}}(X)$.
\label{item:changing_at_boundary_a}
\end{enumerate}
\end{lemma}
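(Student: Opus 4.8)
The plan is to \emph{define} $V^\prime$ by fiat on the three classes of patterns listed in \eqref{equation:listinteract1}--\eqref{equation:listinteract3}, extend it by $0$ to all remaining patterns on edges and vertices, and then verify separately that the four families of pairs are $V^\prime$-good. The uniqueness claim is immediate: the displayed equations prescribe $V^\prime$ on every edge-pattern $[c,d]_{\{v,w\}}$ with $c,d\neq a$ and every vertex-pattern $[c]_{\{v\}}$ with $c\neq a$; the only patterns left are those containing the symbol $a$, and \eqref{equation:listinteract3} together with the requirement that the pairs in (\ref{item:changing_b_at_v})--(\ref{item:changing_at_boundary_a}) be $V^\prime$-good forces the value of $V^\prime$ on exactly those patterns (each such pair $(\theta^?_?(x^v),x^v)$ differs from $x^v$ at a single site, so its $V^\prime$-cocycle sum telescopes to a $\pm$ sum of edge/vertex terms in which precisely one term is an $a$-pattern not yet determined, and we solve for it). So the real content is \emph{existence}: one must check the prescriptions are consistent, i.e. they do not assign two different values to the same pattern, and that the resulting $V^\prime$ makes all four families good.

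First I would check well-definedness of the prescription. The only possible clash is between \eqref{equation:listinteract3} and the values forced by (\ref{item:changing_b_at_v})--(\ref{item:changing_at_boundary_a}); but the patterns $[x^v]_{\{v,w\}}$ appearing in \eqref{equation:listinteract3} involve the symbol $a$ at $v$ (since $x^v_v=a$ by Lemma \ref{lemma:four special configurations}) and the symbol $b$ elsewhere in $D_2(v)$, whereas the $a$-patterns pinned down by (\ref{item:changing_at_boundary_no_a}) and (\ref{item:changing_at_boundary_a}) are edge-patterns $[a,c]_{\{v,w\}}$ and $[a,a]_{\{v,w\}}$ with $c\neq b$ — these are disjoint families, and the single-site $a$-pattern $[a]_{\{v\}}$ pinned by (\ref{item:changing_b_at_v}) is again of a different shape; a short case analysis using the bipartite structure confirms no pattern is constrained twice. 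I also need $\theta^v_b(x^v)\in X_a$ and all the perturbed configurations in (\ref{item:changing_at_boundary_no_a})--(\ref{item:changing_at_boundary_a}) to lie in $X$ — but these are exactly the conclusions of Lemma \ref{lemma:four special configurations} (via Proposition \ref{prop:changing_remain_nnconstraint}), so the pairs in question genuinely lie in $\Delta_X$.

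Next, the goodness verifications. For family (\ref{item:from_X_a}): if $(\tilde x,\tilde y)\in\Delta_{X_a}$, no site carries the symbol $a$, so every edge/vertex pattern occurring in $\sum_S V^\prime([\tilde y]_S)-V^\prime([\tilde x]_S)$ is one where $V^\prime$ agrees with $V$ by \eqref{equation:listinteract1}--\eqref{equation:listinteract2}; hence that sum equals $\sum_S V([\tilde y]_S)-V([\tilde x]_S)=M(\tilde x,\tilde y)$ since $V$ is an interaction for $M|_{\Delta_{X_a}}$. For (\ref{item:changing_b_at_v}): the pair $(\theta^v_b(x^v),x^v)$ differs only at $v$, so its $V^\prime$-sum telescopes to $V^\prime([x^v]_{\{v\}})-V^\prime([\theta^v_b(x^v)]_{\{v\}})+\sum_{w\sim v}\big(V^\prime([x^v]_{\{v,w\}})-V^\prime([\theta^v_b(x^v)]_{\{v,w\}})\big)$; by \eqref{equation:listinteract3} every term with $x^v$ in it that involves the pair $\{v,w\}$ or $\{v\}$ vanishes (these are the patterns $[x^v]_{\{v,w\}}$), and the $\theta^v_b(x^v)$ side consists of $b$-patterns on which $V^\prime=V$ — this is precisely the quantity one uses to \emph{define} $V^\prime([a]_{\{v\}})$, arranged so the sum equals $M(\theta^v_b(x^v),x^v)$. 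Families (\ref{item:changing_at_boundary_no_a}) and (\ref{item:changing_at_boundary_a}) are handled identically: the pair differs only at a single neighbour $w$ of $v$, the sum telescopes, and the single undetermined $a$-pattern $V^\prime([a,c]_{\{v,w\}})$ (resp.\ $V^\prime([a,a]_{\{v,w\}})$) is \emph{defined} to absorb the discrepancy between $M$ of the pair and the remaining $V$-terms, using that $x^v|_{D_2(v)\setminus\{v\}}=b$ (for $v\in\V_1$) or $x^v|_{\partial D_1(v)}=b$ (for $v\in\V_2$) so that all far-away edge patterns cancel between the two configurations.

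The main obstacle I anticipate is the bookkeeping in the single-site telescoping sums: one must be careful that the configurations $x^v$ and their perturbations really differ on a set small enough that all but finitely many $V^\prime$-terms cancel, and that within $D_2(v)$ the only non-cancelling terms are the ones named in \eqref{equation:listinteract3} plus one genuinely new $a$-pattern — this is where the precise support conditions ($x^v|_{D_2(v)\setminus\{v\}}=b$ for $\V_1$, needed because for $v\in\V_1$ a neighbour can itself be changed to $a$ in family (\ref{item:changing_at_boundary_a}), forcing control two steps out) earn their keep. Everything else is a direct computation with the cocycle and Markov conditions for $M$.
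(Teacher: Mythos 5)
Your proposal is correct and follows essentially the same route as the paper: prescribe $V^\prime$ by \eqref{equation:listinteract1}--\eqref{equation:listinteract3}, then solve the single-site telescoping identities coming from the pairs in (\ref{item:changing_b_at_v})--(\ref{item:changing_at_boundary_a}) for the remaining $a$-patterns (first $[a]_v$, then $[a,c]_{\{v,w\}}$, then $[a,a]_{\{v,w\}}$, using the bipartite/orientation convention $v\in\V_1\cap P_1$ and the exclusion $c\neq x^v_w$ to avoid double assignment), which gives both uniqueness and, by running the same computations in reverse, existence. This is exactly the paper's argument (its equations \ref{equation:listinteract4}--\ref{equation:listinteract6}), so no further comparison is needed.
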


In the following proof the reader is encouraged to refer to the statement of Lemma \ref{lemma:four special configurations} for information about configurations $x^v$.
\begin{proof}[Proof of Lemma \ref{lemma:construction_of_V^prime}]
We will begin by proving uniqueness of the interaction assuming its existence. Consider a nearest neighbour interaction $V^\prime$ on $X$ which satisfies the conclusion of this lemma. We will express $V^\prime$ in terms of the cocycle $M$ and the interaction $V$.

Since $V^\prime$ satisfies \eqref{equation:listinteract1}, \eqref{equation:listinteract2} and \eqref{equation:listinteract3} we have to prove that the following can be expressed in terms of $M$ and $V$:
\begin{enumerate}[(a)]
\item For all $v\in \V_1\cup\V_2$, the value $V^\prime(\l a\r_v)$,\label{item:the_single_a_at centre}
\item For all $v\in \V_1\cup\V_2$, $w\sim v$ and $c\in \A\setminus \{x^v_w,a\}$ such that $\l a,c\r_{\{v,w\}}\in \B_{\{v,w\}}(X)$, the value $V^\prime(\l a,c\r_{\{v,w\}})$ and \label{item:a_at_centre_not_a_at_boundary}
\item For all $v\in \V_1\cap P_1$, $w\sim v$ such that $\l a,a\r_{\{v,w\}}\in\B_{\{v,w\}}(X)$, the value $V^\prime(\l a,a\r_{\{v,w\}})$.\label{item:a_at_both}
\end{enumerate}

\noindent\emph{Proof for Part (\ref{item:the_single_a_at centre}):} Let $v\in \V_1\cup\V_2$. Since the pair $(\theta^v_b(x^v),x^v)$ ((\ref{item:changing_b_at_v}) in the statement of the lemma) is $V^\prime$-good by rearranging the expression for $M(\theta^v_b(x^v),x^v)$ we get that
\begin{eqnarray}
V^\prime(\l a\r_v)&=&V^\prime( x^v|_{\{v\}})\nonumber\\
&=&M(\theta^v_b(x^v), x^v)+ V^\prime( \theta^v_b(x^v)|_{\{v\}})+\sum_{w: w\sim v} V^\prime\left( \theta^v_b(x^v)|_{\{v,w\}}\right)\nonumber\\
&&-(\sum_{w: w\sim v} V^\prime\left( x^v|_{\{v,w\}}\right))\label{equation:listsimilargood1}.
\end{eqnarray}
Now we will express the right hand side of this expression in terms of $M$ and $V$. Since $\theta^v_b(x^v)\in X_a$ $V^\prime( \theta^v_b(x^v)|_{\{v,w\}})=V( \theta^v_b(x^v)|_{\{v,w\}})$ and $V^\prime( \theta^v_b(x^v)|_{\{v\}})=V( \theta^v_b(x^v)|_{\{v\}})$. By \eqref{equation:listinteract3}, $V^\prime(x^v|_{\{v,w\}})=0$.

Putting all this together we get
\begin{eqnarray}
V^\prime(\l a\r_v)&=&M(\theta^v_b(x^v), x^v)+ V( \theta^v_b(x^v)|_{\{v\}})+\sum_{w: w\sim v} V( \theta^v_b(x^v)|_{\{v,w\}}).\label{equation:listinteract4}
\end{eqnarray}

\noindent\emph{Proof for Part (\ref{item:a_at_centre_not_a_at_boundary}): }Consider $v\in \V_1\cup \V_2$, $w\sim v$ and $c\in \A\setminus\{a, x^v_w\}$ such that $\l a,c\r_{\{v,w\}}\in \B_{\{v,w\}}(X)$. Since the pair $(\theta^w_c(x^v),x^v)$ ((\ref{item:changing_at_boundary_no_a}) in the statement of the lemma) is $V^\prime$-good, by rearranging the expression for $M(x^v, \theta^w_c(x^v))$ we get
\begin{eqnarray}
V^\prime(\l a,c\r_{\{v,w\}})&=&V^\prime( \theta^w_c(x^v)|_{\{v,w\}})\nonumber\\
&=&M(x^v, \theta^w_c(x^v))+\sum_{w^\prime: w^\prime\sim w}V^\prime( x^v|_{\{w^\prime,w\}}) +V^\prime( x^v|_{\{w\}})\nonumber\\
&&-\left(\sum_{w^\prime: w^\prime\sim w, w^\prime\neq v}V^\prime( \theta^w_c(x^v)|_{\{w^\prime,w\}})\right)-V^\prime( \theta^w_c(x^v)|_{\{w\}})\label{equation:listsimilargood2}.
\end{eqnarray}•
We will now express the right hand side of this expression in terms of $M$ and $V$.

By \eqref{equation:listinteract3}, $V^\prime( x^v|_{\{v,w\}})=0$. We know that $(\theta^w_c(x^v))_{w},\  x^v_w\neq a$ and if $w^\prime\sim w$, $w^\prime\neq v$ then $w^\prime\in \partial D_1(v)$ and so $(\theta^w_c(x^v))_{w^\prime}=x^v_{w^\prime}=b$. Therefore by \eqref{equation:listinteract1} and \eqref{equation:listinteract2} 
$$V^\prime( x^v|_{\{w^\prime,w\}})=V( x^v|_{\{w^\prime,w\}}),\ V^\prime(\theta^w_c(x^v)|_{\{w^\prime, w\}})=V( \theta^w_c(x^v)|_{\{w^\prime, w\}})$$
and
$$V^\prime( x^v|_{\{w\}})=V(x^v|_{\{w\}}),\ V^\prime( \theta^w_c(x^v)|_{\{w\}})=V( \theta^w_c(x^v)|_{\{w\}}).$$

Putting all this together we get
\begin{eqnarray}
V^\prime(\l a,c\r_{\{v,w\}})&=&M(x^v, \theta^w_c(x^v))+\displaystyle{\sum_{w^\prime: w^\prime\sim w, w^\prime\neq v}\left( V( x^v|_{\{w^\prime,w\}})-V(\theta^w_c(x^v)|_{\{w^\prime,w\}}) \right) }\nonumber\\
&&+V( x^v|_{\{w\}})-V( \theta^w_c(x^v)|_{\{w\}}).\label{equation:listinteract5}
\end{eqnarray}•

\noindent\emph{Proof for Part (\ref{item:a_at_both}):} Consider $v\in \V_1\cap P_1$ and $w\sim v$ such that $\l a,a\r_{\{v,w\}}\in \B_{\{v,w\}}(X)$. Since the pair $(\theta^w_a(x^v),x^v)$ ((\ref{item:changing_at_boundary_a}) in the statement of the lemma) is $V^\prime$-good by rearranging the expression for $M(x^v, \theta^w_a(x^v))$ we get that
\begin{eqnarray}
V^\prime(\l a,a\r_{\{v,w\}})&=&V^\prime( \theta^w_a(x^v)|_{\{v,w\}})\nonumber\\
&=&M(x^v, \theta^w_a(x^v))+\sum_{w^\prime: w^\prime\sim w}V^\prime( x^v|_{\{w^\prime,w\}}) +V^\prime( x^v|_{\{w\}})\nonumber\\
&&-\left(\sum_{w^\prime: w^\prime\sim w, w^\prime\neq v}V^\prime( \theta^w_a(x^v)|_{\{w^\prime,w\}})\right)-V^\prime( \theta^w_a(x^v)|_{\{w\}}).\label{equation:listsimilargood3}
\end{eqnarray}•
We will now express the right hand side of this expression in terms of $M$ and $V$. By \eqref{equation:listinteract3}, $V^\prime( x^v|_{\{v,w\}})=0$. Since $v\in \V_1$, for $w^\prime\sim w$ such that $w^\prime\neq v$ we know that $x^v_w=x^v_{w^\prime}=b\neq a$. Therefore by \eqref{equation:listinteract1} and \eqref{equation:listinteract2} $$V^\prime( x^v|_{\{w^\prime,w\}})=V( x^v|_{\{w^\prime,w\}})$$
and
$$V^\prime( x^v|_{\{w\}})=V( x^v|_{\{w\}}).$$
Since $\l a,a\r_{\{v,w\}}\in\B_{\{v,w\}}(X)$ therefore $v,w\in \V_1$ and $x^v_{w^\prime}=x^w_{w^\prime}=b$ for all $w^\prime\sim w$, $w^\prime\neq v$. Then by \eqref{equation:listinteract3}
\begin{eqnarray*}
V^\prime( \theta^w_a(x^v)|_{\{w^\prime,w\}})= V^\prime( \l b,a\r_{\{w^\prime,w\}})=V^\prime( x^w|_{\{w^\prime,w\}})=0.
\end{eqnarray*}
By \eqref{equation:listinteract4} we get that
$$V^\prime( \theta^w_a(x^v)|_{\{w\}})=V^\prime(\l a\r_{\{w\}})=M(\theta^w_b(x^w), x^w)+ V( \theta^w_b(x^w)|_{\{w\}})+\sum_{w^\prime: w^\prime\sim w} V(\theta^w_b(x^w)|_{\{w,w^\prime\}}).$$
Putting all this together, we get
\begin{eqnarray}
V^\prime(\l a,a\r_{\{v,w\}})&=&M(x^v, \theta^w_a(x^v))+\displaystyle{\sum_{w^\prime: w^\prime\sim w, w^\prime\neq v}V( x^v|_{\{w^\prime,w\}}) }+V( x^v|_{\{w\}})-M(\theta^w_b(x^w), x^w)\nonumber\\
&&- V( \theta^w_b(x^w)|_{\{w\}})-(\sum_{w^\prime: w^\prime\sim w} V( \theta^w_b(x^w)|_{\{w,w^\prime\}})).\label{equation:listinteract6}
\end{eqnarray}

This completes proof for uniqueness. It also follows from the arguments given above that an interaction $V^\prime$ satisfies \eqref{equation:listinteract1}, \eqref{equation:listinteract2}, \eqref{equation:listinteract3}, \eqref{equation:listinteract4}, \eqref{equation:listinteract5} and \eqref{equation:listinteract6} if and only if the pairs listed in (\ref{item:from_X_a}), (\ref{item:changing_b_at_v}), (\ref{item:changing_at_boundary_no_a}) and (\ref{item:changing_at_boundary_a}) are $V^\prime$-good.\\

Consider a nearest neighbour interaction $V^\prime$ on $X$ given by the following:
\begin{enumerate}[ (i)]
\item If $v\sim w\in \V$ and $\l c,d\r_{\{v,w\}}\in\B_{\{v,w\}}(X_a)$ then $V^\prime(\l c,d\r_{v,w})$ is given by \eqref{equation:listinteract1}
\item and $V^\prime(\l c\r_v)$ is given by \eqref{equation:listinteract2}.
\item If $v\in \V_1\cup \V_2$ and $w\sim v$, then $V^\prime( x^v|_{\{v,w\}})$ is given by \eqref{equation:listinteract3}.
\item If $v\in \V_1\cup \V_2$, the value $V^\prime(\l a\r_v)$ is given by \eqref{equation:listinteract4}.
\item If $v\in \V_1\cup \V_2$, $w\sim v$ and $c\in \A\setminus \{x^v_w,a\}$ such that $\l a,c\r_{\{v,w\}}\in \B_{\{v,w\}}(X)$, the value $V^\prime(\l a,c\r_{\{v,w\}})$ is given by \eqref{equation:listinteract5}.
\item If $v\in \V_1\cap P_1$, $w\sim v$ such that $\l a,a\r_{\{v,w\}}\in \B_{\{v,w\}}(X)$, the value $V^\prime(\l a,a\r_{\{v,w\}})$ is given by \eqref{equation:listinteract6}.
\end{enumerate}•
By the preceding paragraph the proof is complete.
\end{proof}

The following lemma explains why the weak conclusions of Lemma \ref{lemma:construction_of_V^prime} are sufficient and completes the proof of Theorem \ref{thm:main theorem}.

\begin{lemma}\label{lemma:from_particular_pairs_to_all_pairs}
Let $\G=(\V, \E1)$ be a bipartite graph with partite classes $P_1$ and $P_2$, $\A$ be a finite set, $X\subset \A^\V$ be an n.n.constraint space and $X_a$ be a strong config-fold of $X$ where the symbol $a$ is strongly config-folded into the symbol $b$. Consider $\V_1, \V_2\subset \V$ and $x^v\in X$ for all $v\in \V_1\cup \V_2$ satisfying the conclusion of Lemma \ref{lemma:four special configurations}. Let $M\in \M_X$ be a Markov cocycle on $X$ such that $M|_{\Delta_{X_a}}$ is a Gibbs cocycle with some nearest neighbour interaction $V$ and $V^\prime$ be an interaction on $X$ as obtained in Lemma \ref{lemma:construction_of_V^prime}. Then $M\in \Gi_X$ is Gibbs with some nearest neighbour interaction $V^\prime$.
\end{lemma}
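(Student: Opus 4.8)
The plan is to show that $M$ is a Gibbs cocycle with nearest neighbour interaction $V^\prime$ by proving that \emph{every} asymptotic pair $(x,y)\in\Delta_X$ is $V^\prime$-good, then invoking the definition of a Gibbs cocycle. By Proposition \ref{prop:V-goodness_equivalence_markov_similar} the set of $V^\prime$-good pairs is an equivalence relation on $X$ which is closed under Markov-similarity, and by Corollary \ref{corollary:chain_of_V-goodness} it suffices to connect any $(x,y)$ to the diagonal by a chain of single-site changes, each of which is Markov-similar to one of the distinguished $V^\prime$-good pairs (\ref{item:from_X_a})--(\ref{item:changing_at_boundary_a}) furnished by Lemma \ref{lemma:construction_of_V^prime}. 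Concretely I would argue in four reductions, exactly mirroring the concrete example in subsection \ref{subsection:concrete}.

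First, given $(x,y)\in\Delta_X$, let $F$ be the (finite) set of sites where $x$ and $y$ differ. Using that $a$ folds into $b$ (equation \ref{equation:foldingdefn1}), replace every occurrence of $a$ in $x$ and in $y$ that lies outside $F\cup\partial F$ by $b$; by Proposition \ref{prop:changing_remain_nnconstraint} this stays in $X$, and the resulting pair $(x^1,y^1)$ is Markov-similar to $(x,y)$ and has only finitely many $a$'s. So it suffices to treat pairs with finitely many $a$'s. Second, since pairs lying entirely in $\Delta_{X_a}$ are $V^\prime$-good (item (\ref{item:from_X_a})), and since the $a$'s can be eliminated one at a time (again Proposition \ref{prop:changing_remain_nnconstraint} and equation \ref{equation:foldingdefn1}), it suffices to prove that a pair in $X$ in which a single site carrying $a$ is switched to $b$ is $V^\prime$-good; i.e. a pair of the form $(x^\sharp,\theta^v_b(x^\sharp))$ where $x^\sharp_v=a$ and $x^\sharp$ has no other $a$. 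Third, I would show such a pair is Markov-similar to $(x^v,\theta^v_b(x^v))$ after first modifying the boundary $\partial\{v\}$: walk from $x^\sharp$ to a configuration agreeing with $x^v$ on $D_1(v)$ by changing the neighbours $w\sim v$ of $v$ one at a time from $x^\sharp_w$ to $x^v_w$. Each such single-site change is Markov-similar — after replacing all $a$'s outside a neighbourhood of $w$ by $b$ — either to a pair of type (\ref{item:changing_at_boundary_no_a}) (when $x^\sharp_w\neq a$) or type (\ref{item:changing_at_boundary_a}) (when $x^\sharp_w=a$, which forces $v\in\V_1$ and, after possibly swapping the roles dictated by the partite classes $P_1,P_2$, puts us in the case $v\in\V_1\cap P_1$). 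Since $x^v$ has the properties guaranteed by Lemma \ref{lemma:four special configurations} — it equals $b$ on $D_2(v)\setminus\{v\}$ (if $v\in\V_1$) or on $\partial D_1(v)$ (if $v\in\V_2$) — each intermediate step of this walk is a legal configuration in $X$. Finally, once we have reached $(x^v,\theta^v_b(x^v))$, this is $V^\prime$-good by item (\ref{item:changing_b_at_v}). Chaining all these steps via Corollary \ref{corollary:chain_of_V-goodness} shows $(x^\sharp,\theta^v_b(x^\sharp))$ is $V^\prime$-good, and unwinding the earlier reductions gives the claim for arbitrary $(x,y)\in\Delta_X$.

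The main obstacle I anticipate is the bookkeeping in the third reduction: verifying that the intermediate configurations along the boundary walk genuinely lie in $X$ and that each single-site change is Markov-similar to one of the four distinguished pair types with the \emph{correct} fixed reference configuration $x^v$. The subtlety is that $x^\sharp$ need not agree with $x^v$ away from $v$, so one must first reduce $x^\sharp$ to something that agrees with $x^v$ off a bounded set (again by the $a\mapsto b$ replacement and Markov-similarity) before the single-site changes along $\partial\{v\}$ can be recognised as Markov-similar to the pairs in Lemma \ref{lemma:construction_of_V^prime}; here the properties $x^v|_{D_2(v)\setminus\{v\}}=b$ resp. $x^v|_{\partial D_1(v)}=b$ are exactly what make the relevant $2$-neighbourhoods of the changed sites match. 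A secondary bookkeeping point is the partite-class dependence in type (\ref{item:changing_at_boundary_a}): the pair $(\theta^w_a(x^v),x^v)$ is only asserted $V^\prime$-good for $v\in\V_1\cap P_1$, so when both $v$ and its neighbour $w$ carry $a$ one must orient the edge so that the site being changed is the one in $P_2$ and the reference configuration is built around the endpoint in $P_1$; this is where bipartiteness is used. The cocycle and Markov conditions for $M$ handle everything else, since $M(x,y)$ depends only on the pattern on $F\cup\partial F$, making all the Markov-similarity invocations valid.
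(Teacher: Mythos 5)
Your reductions (first to pairs with finitely many $a$'s, then to single $a\to b$ flips, then standardising around $v$ and walking $\partial\{v\}$ against the distinguished pairs of lemma \ref{lemma:construction_of_V^prime}) follow the paper's route, but the plan has a genuine gap at the single-flip step: your chain is not closed. After replacing everything off $D_1(v)$ by $x^v$ via Markov-similarity with $A=\{v\}$ and walking the boundary out to $x^v$ and flipping $v$, you have a chain from $x^\sharp$ (or rather its standardised version $x^1$) to $\theta^v_b(x^v)$, not to $\theta^v_b(x^1)$; corollary \ref{corollary:chain_of_V-goodness} needs the chain's endpoints to be exactly the pair you want, so you must also walk the boundary \emph{back}, restoring the original values on $\partial\{v\}$ after $v$ has been set to $b$. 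When no neighbour of $v$ carries $a$ this return segment stays in $X_a$ and is one use of item (\ref{item:from_X_a}), but in the case you route through item (\ref{item:changing_at_boundary_a}) — some $w\sim v$ with $x^\sharp_w=a$ — re-placing that $a$ next to the now-$b$ site $v$ is Markov-similar to \emph{none} of the four distinguished pairs based at $v$, since those all keep $a$ at $v$. The paper handles this by a bootstrap: it first proves the isolated-$a$ statement (its statement (a)) and then invokes it inside the adjacent-$a$ case, using $x^v|_{D_2(v)\setminus\{v\}}=b$ so the re-placed $a$ at $w$ has only non-$a$ neighbours (equivalently, one can match this step to the item-(\ref{item:changing_b_at_v}) pair based at $w$). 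Your proposal contains neither; relatedly, your second reduction's claim that one may take $x^\sharp$ with \emph{no} other $a$ is inconsistent with your own type-(\ref{item:changing_at_boundary_a}) case, because $a$'s sitting on $\partial\{v\}$ lie inside the window $D_1(v)$ that Markov-similarity must preserve and cannot be removed beforehand.

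A second gap is the orientation of item (\ref{item:changing_at_boundary_a}): it is only supplied with the retained $a$ at a vertex of $\V_1\cap P_1$ and the changed site in $P_2$. If the flip you are trying to certify is at $v\in P_2$ while its $P_1$-neighbour carries $a$, "swapping the roles" is not available — the pair is fixed, and its boundary-walk step at that neighbour matches no distinguished pair. The cure is global, as in the paper's final statement: order the elimination so that all $a$'s on $P_1$ are removed first (each such flip being the adjacent-$a$ case above), after which the $a$'s on $P_2$ are isolated and the easy case applies. Your plan leaves the elimination order unspecified, so some of the chains it generates cannot be completed from the given $V^\prime$-good pairs. Both gaps are repairable with exactly the paper's bookkeeping, but as written they sit at the two places where the actual work of the lemma is done.
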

\begin{proof}
We will use the $V^\prime$-good pairs guaranteed by Lemma \ref{lemma:construction_of_V^prime} as steps in proving the following pairs are $V^\prime$-good:
\begin{enumerate}[(a)]
\item Let $x\in X$ and $v\in \V_1\cup \V_2$ such that $x_v=a$ and $x_w\neq a$ for all $w\sim v$. Then $(x, \theta^v_b(x))$ is $V^\prime$-good.
\label{item:nearby_not_a}
\item Let $x\in X$ and $v\in \V_1\cap P_1$ and $w\sim v$ such that $x_v=x_w=a$. Then $(x, \theta^v_b(x))$ is $V^\prime$-good. \label{item:nearby_is_a}
\item All asymptotic pairs $(x,y)\in \Delta_X$ are $V^\prime$-good.
\label{item:no_constraint}
\end{enumerate}
Given an asymptotic pair, Statements (\ref{item:nearby_not_a}) and (\ref{item:nearby_is_a}) allow replacement of the $a$'s by $b$'s giving us a pair in $\Delta_{X_a}$. From Conclusion (\ref{item:from_X_a}) in Lemma \ref{lemma:construction_of_V^prime} we know that all pairs in $\Delta_{X_a}$ are $V^\prime$-good. Since the relation $V^\prime$-good is an equivalence relation this proves Statement (\ref{item:no_constraint}) thereby completing the proof. Now let us look at the details. An asymptotic pair $(x,y)\in \Delta_X$ is said to be \emph{Markov-similar to $(x',y')$ for a set $A\subset \V$} if $x|_{A^c}=y|_{A^c}$, $x'|_{A^c}=y'|_{A^c}$ and $x|_{A\cup \partial A}=x'|_{A\cup \partial A}$, $y|_{A\cup \partial A}=y'|_{A\cup \partial A}$.

Consider any $v\in \V_1\cup \V_2$ and $x\in X$ such that $x_v=a$. Let
\begin{equation*}
\partial\{v\}=\{w_1, w_2, \ldots w_n\}.
\end{equation*}
Since for all $1\leq r\leq n$, $[a,x_{w_r}]_{\{v,w_r\}}\in \B_{\{v,w_r\}}(X)$, Lemma \ref{lemma:four special configurations} implies that
$$\theta^{w_r,w_{r+1} \ldots, w_n}_{x_{w_r},x_{w_{r+1}},\ldots,x_{w_n}}(x^v)\in X. $$
Let $x^1=\theta^{w_1, w_2, \ldots, w_n}_{x_{w_1},x_{w_2},\ldots,x_{w_n}}(x^v)$. The pair $(x,\theta^v_b(x))$ is Markov-similar to $(x^1, \theta^v_b(x^1))$ for the set $\{v\}$. Note
$$\theta^{w_1, w_2, \ldots, w_{r}}_{x^{v}_{w_1},x^{v}_{w_2},\ldots,x^{v}_{w_{r}}}(x^1)=\theta^{w_{r+1} \ldots, w_n}_{x_{w_{r+1}},\ldots,x_{w_n}}(x^v)\in X$$
and that $x^1$ and $x^v$ differ only on $\partial\{v\}$. \\

In the following we will remove $a$'s in configurations from those vertices which are isolated from other $a$'s.

\noindent\emph{Proof of Statement (\ref{item:nearby_not_a}):} Consider the sequence
\begin{eqnarray*}
x^1,\theta^{w_1}_{x^v_{w_1}}(x^1),\theta^{w_1,w_2}_{x^v_{w_1},x^v_{w_2}}(x^1), \ldots, \theta^{w_1, w_2, \ldots, w_n}_{x^{v}_{w_1},x^{v}_{w_2},\ldots,x^{v}_{w_n}}(x^1)=x^v,\theta^v_b(x^v),\theta^v_b(x^1).
\end{eqnarray*}
Here single site changes have been made on $\partial \{v\}$ taking us from $x^1$ to $x^v$ and then the symbol at $v$ has been changed to obtain $\theta^v_b(x^v)$. In the last step $\theta^v_b(x^v)$ has been changed on $\partial \{v\}$ to obtain $\theta^v_b(x^1)$. 

\noindent Each pair
$$(\theta^{w_1, w_2, \ldots, w_r}_{x^{v}_{w_1},x^{v}_{w_2},\ldots,x^{v}_{w_r}}(x^1),\theta^{w_1, w_2, \ldots, w_{r+1}}_{x^{v}_{w_1},x^{v}_{w_2},\ldots,x^{v}_{w_{r+1}}}(x^1))$$
is Markov-similar to $(\theta^{w_{r+1}}_{x^1_{w_{r+1}}}(x^v),x^v)$ for the set $\{w_{r+1}\}$ for all $0\leq r\leq n-1$. By Conclusion (\ref{item:changing_at_boundary_no_a}) in Lemma \ref{lemma:construction_of_V^prime}, $(\theta^{w_{r+1}}_{x^1_{w_{r+1}}}(x^v),x^v)$ is $V^\prime$-good for all $0\leq r\leq n-1$. Thus by Corollary \ref{corollary:chain_of_V-goodness}, we get that $(x^1,x^v)$ is $V^\prime$-good. By Conclusion (\ref{item:changing_b_at_v}) in Lemma \ref{lemma:construction_of_V^prime} and symmetry of the relation $V^\prime$-good we get that $(x^v,\theta^v_b(x^v))$ is $V^\prime$-good. Since $\theta^v_b(x^v),\theta^v_b(x^1)\in X_a$, Conclusion (\ref{item:from_X_a}) in Lemma \ref{lemma:construction_of_V^prime} implies that $(\theta^v_b(x^v),\theta^v_b(x^1))$ is $V^\prime$-good. By Corollary \ref{corollary:chain_of_V-goodness} we arrive at $(x^1, \theta^v_b(x^1))$ being $V^\prime$-good. But $(x^1, \theta^v_b(x^1))$ is Markov-similar to $(x,\theta^v_b(x))$. Therefore by Proposition \ref{prop:V-goodness_equivalence_markov_similar} we get that $(x,\theta^v_b(x))$ is $V^\prime$-good.
\bigskip

In the next step we remove the $a$'s which are not isolated.

\noindent\emph{Proof of Statement (\ref{item:nearby_is_a}):} We construct a sequence from $x$ to $\theta^v_b(x)$ in three parts. In the first part single site changes will be made on $\partial \{v\}$ taking us from $x^1$ to $x^v$. In the second part the symbol at $v$ will be changed to obtain $\theta^v_b(x^v)$. In the last part single site changes will be made on $\partial \{v\}$ to obtain $\theta^v_b(x^1)$ from $\theta^v_b(x^v)$.

Consider the sequence
\begin{gather*}
(x^1,\theta^{w_1}_{x^v_{w_1}}(x^1),\theta^{w_1,w_2}_{x^v_{w_1},x^v_{w_2}}(x^1), \ldots, \theta^{w_1, w_2, \ldots, w_n}_{x^{v}_{w_1},x^{v}_{w_2},\ldots,x^{v}_{w_n}}(x^1)=x^v),\\
(x^v,\theta^v_b(x^v)),\\
(\theta^v_b(x^v),\theta^{w_1}_{x^1_{w_1}}(\theta^v_b(x^v)),\theta^{w_1,w_2}_{x^1_{w_1},x^1_{w_2}}(\theta^v_b(x^v)), \ldots, \theta^{w_1, w_2, \ldots, w_n}_{x^{1}_{w_1},x^{1}_{w_2},\ldots,x^{1}_{w_n}}(\theta^v_b(x^v))=\theta^v_b(x^1)).
\end{gather*}

In the first part of the sequence notice that
$$(\theta^{w_1, w_2, \ldots, w_r}_{x^{v}_{w_1},x^{v}_{w_2},\ldots,x^{v}_{w_r}}(x^1),\theta^{w_1, w_2, \ldots, w_{r+1}}_{x^{v}_{w_1},x^{v}_{w_2},\ldots,x^{v}_{w_{r+1}}}(x^1))$$
is Markov-similar to $(\theta^{w_{r+1}}_{x^1_{w_{r+1}}}(x^v),x^v)$ for the set $\{w_{r+1}\}$ for all $0\leq r\leq n-1$. If for some $0\leq r\leq n-1$, $x^1_{w_{r+1}}\neq a$ then by Conclusion (\ref{item:changing_at_boundary_no_a}) in Lemma \ref{lemma:construction_of_V^prime} we get that $(\theta^{w_{r+1}}_{x^1_{w_{r+1}}}(x^v),x^v)$ is $V^\prime$-good. If for some $0\leq r\leq n-1$, $x^1_{w_{r+1}}= a$ then by Conclusion (\ref{item:changing_at_boundary_a}) in Lemma \ref{lemma:construction_of_V^prime} we get that $(\theta^{w_{r+1}}_{x^1_{w_{r+1}}}(x^v),x^v)$ is $V^\prime$-good. Proposition \ref{prop:V-goodness_equivalence_markov_similar} implies that
$$(\theta^{w_1, w_2, \ldots, w_r}_{x^{v}_{w_1},x^{v}_{w_2},\ldots,x^{v}_{w_r}}(x^1),\theta^{w_1, w_2, \ldots, w_{r+1}}_{x^{v}_{w_1},x^{v}_{w_2},\ldots,x^{v}_{w_{r+1}}}(x^1))$$
is $V^\prime$-good for all $0\leq r\leq n-1$. By Corollary \ref{corollary:chain_of_V-goodness}, we get that $(x^1,x^v)$ is $V^\prime$-good and we are done with the first part of the sequence.

For the second part of the sequence by Conclusion (\ref{item:changing_b_at_v}) in Lemma \ref{lemma:construction_of_V^prime} and by the symmetry of the relation $V^\prime$-good we get that $(x^v,\theta^v_b(x^v))$ is $V^\prime$-good.

For the third part of the sequence the asymptotic pair
$$(\theta^{w_1, w_2, \ldots, w_r}_{x^{1}_{w_1},x^{1}_{w_2},\ldots,x^{1}_{w_r}}(\theta^v_b(x^v)),\theta^{w_1, w_2, \ldots, w_{r+1}}_{x^{1}_{w_1},x^{1}_{w_2},\ldots,x^{1}_{w_{r+1}}}(\theta^v_b(x^v)))$$
is Markov-similar to $(\theta^v_b(x^v),\theta^{w_{r+1}}_{x^1_{w_{r+1}}}(\theta^v_b(x^v)))$ for the set $\{w_{r+1}\}$ for all $0\leq r\leq n-1$.

If for some $0\leq r\leq n-1$, $x^1_{w_{r+1}}\neq a$ then $(\theta^v_b(x^v),\theta^{w_{r+1}}_{x^1_{w_{r+1}}}(\theta^v_b(x^v))\in X_a$ and by Conclusion (\ref{item:from_X_a}) in Lemma \ref{lemma:construction_of_V^prime} we get that $(\theta^v_b(x^v),\theta^{w_{r+1}}_{x^1_{w_{r+1}}}(\theta^v_b(x^v)))$ is $V^\prime$-good. Since $v\in \V_1$, $x^v|_{D_2(v)\setminus\{v\}}=b$. Thus if for some $0\leq r\leq n-1$, $x^1_{w_{r+1}}=a$ then
$$(\theta^{w_{r+1}}_{x^1_{w_{r+1}}}(\theta^v_b(x^v)),\theta^v_b(x^v))=(\theta^{w_{r+1}}_{a}(\theta^v_b(x^v)),\theta^{w_{r+1}}_b(\theta^{w_{r+1}}_{a}(\theta^v_b(x^v)))$$
and $(\theta^{w_{r+1}}_{a}(\theta^v_b(x^v)))_{w^\prime}=b\neq a$ for all $w^\prime\sim w_{r+1}$. By Statement (\ref{item:nearby_not_a}) in the proof of this lemma we get that $(\theta^{w_{r+1}}_{a}(\theta^v_b(x^v)),\theta^{w_{r+1}}_b(\theta^{w_{r+1}}_{a}(\theta^v_b(x^v)))$ is $V^\prime$-good. By symmetry of the relation $V^\prime$-good we get that $(\theta^v_b(x^v),\theta^{w_{r+1}}_{x^1_{w_{r+1}}}(\theta^v_b(x^v)))$ is $V^\prime$-good in this case as well.

Thus for all $0\leq r \leq n-1$ we find that $(\theta^v_b(x^v),\theta^{w_{r+1}}_{x^1_{w_{r+1}}}(\theta^v_b(x^v)))$ is $V^\prime$-good. Using Corollary \ref{corollary:chain_of_V-goodness} we find that $(\theta^v_b(x^v), \theta^v_b(x^1))$ is $V^\prime$-good.

So we have proven that $(x^1, x^v), (x^v, \theta^v_b(x^v)), (\theta^v_b(x^v), \theta^v_b(x^1))$ are $V^\prime$-good. Stringing them by Corollary \ref{corollary:chain_of_V-goodness} we get that $(x^1,\theta^v_b(x^1))$ is $V^\prime$-good. But $(x^1, \theta^v_b(x^1))$ is Markov-similar to $(x,\theta^v_b(x))$. Therefore by Proposition \ref{prop:V-goodness_equivalence_markov_similar} we get that $(x,\theta^v_b(x))$ is $V^\prime$-good.\\

The previous two statements give us the freedom to change the $a$'s into $b$'s. Now we will use them to prove the last statement.

\noindent\emph{Proof of Statement (\ref{item:no_constraint}):} Consider an asymptotic pair $(x,y)\in \Delta_X$. Let
$$F:=\{v\in \V\:|\:x_v\neq y_v\}$$
and $x^1, y^1\in \A^\V$ be obtained by replacing the $a$'s outside $F\cup \partial F$ by $b$'s, that is
$$x^1_u:=\begin{cases}
x_u&\text{ if }u \in F\cup\partial F \text{ or }x_u\neq a\\
b&\text{ otherwise}
\end{cases}•$$
and
$$y^1_u:=\begin{cases}
y_u&\text{ if }u \in F\cup\partial F \text{ or }y_u\neq a\\
b&\text{ otherwise}.
\end{cases}•$$
By \eqref{equation:foldingdefn1} $x^1, y^1\in X$. Since $x=y$ on $F^c$, it follows that $(x,y)$ and $(x^1, y^1)$ are Markov-similar but there are only finitely many vertices where $x^1$ and $y^1$ equal $a$. Let
\begin{eqnarray*}
\{v_1, v_2,\ldots, v_r\}&:=&\{v\in P_1\:|\: x^1_v=a\}\\
\{w_1, w_2,\ldots, w_{r^\prime}\}&:=&\{w\in P_1\:|\: y^1_w=a\}\\
\{v_{r+1}, v_{r+2} \ldots, v_{r+k}\}&:=&\{v\in P_2\:|\: x^1_v=a\}\\
\{w_{r^\prime+1}, w_{r^\p+2} \ldots, w_{r^\p+k^\p}\}&:=&\{w\in P_2\:|\: y^1_w=a\}
\end{eqnarray*}
index the vertices with $a$ in $x^1$ and $y^1$. By Lemma \ref{lemma:four special configurations} the configurations $\theta^{v_1, v_2 \ldots, v_{i}}_{b,b, \ldots, b}(x^1)$ and $\theta^{w_1, w_2 \ldots, w_{i^\prime}}_{b,b, \ldots, b}(y^1)$ are configurations in $X$ for all $1\leq i \leq r+k$ and $1\leq i^\prime \leq r^\prime+k^\prime$. Therefore we can consider the Sequences (\ref{sequence:sequence_in_final_1} to \ref{sequence:sequence_in_final_5}) given below:

\noindent We begin by replacing the $a$'s in $x^1$ from the partite class $P_1$ by $b$'s.
\begin{gather}
(x^1, \theta^{v_1}_b(x^1),\theta^{v_1, v_2}_{b,b}(x^1),\ldots,\theta^{v_1, v_2,\ldots, v_r}_{b,b,\ldots,b}(x^1)).\label{sequence:sequence_in_final_1}
\end{gather}
In the resulting configuration $\theta^{v_1, v_2,\ldots, v_r}_{b,b,\ldots,b}(x^1)$ adjacent vertices cannot both have the symbol $a$; the $a$'s left in the configuration $x^1$ are changed to $b$'s.
\begin{gather}
(\theta^{v_1, v_2,\ldots, v_r}_{b,b,\ldots,b}(x^1),\theta^{v_1, v_2,\ldots, v_{r+1}}_{b,b,\ldots,b}(x^1),\ldots,\theta^{v_1, v_2,\ldots, v_{r+k}}_{b,b,\ldots,b}(x^1)).\label{sequence:sequence_in_final_2}
\end{gather}
After removing the $a$'s from $x^1$ and $y^1$ the configurations obtained are elements of $X_a$. 
\begin{gather}
(\theta^{v_1, v_2,\ldots, v_{r+k}}_{b,b,\ldots,b}(x^1),\theta^{w_1, w_2 \ldots, w_{r^\prime+k^\prime}}_{b,b, \ldots, b}(y^1)).\label{sequence:sequence_in_final_3}
\end{gather}
The changes occurring in Sequences \ref{sequence:sequence_in_final_1} and \ref{sequence:sequence_in_final_2} are used in reverse to obtain $y^1$ starting with $\theta^{w_1, w_2 \ldots, w_{r^\prime+k^\prime}}_{b,b, \ldots, b}(y^1)$.
\begin{gather}
(\theta^{w_1, w_2 \ldots, w_{r^\prime+k^\prime}}_{b,b, \ldots, b}(y^1),\theta^{w_1, w_2 \ldots, w_{r^\prime+k^\prime-1}}_{b,b, \ldots, b}(y^1), \ldots, \theta^{w_1, w_2 \ldots, w_{r^\prime}}_{b,b, \ldots, b}(y^1)),\label{sequence:sequence_in_final_4}\\
(\theta^{w_1, w_2 \ldots, w_{r^\prime}}_{b,b, \ldots, b}(y^1),\theta^{w_1, w_2 \ldots, w_{r^\prime-1}}_{b,b, \ldots, b}(y^1),\ldots, \theta^{w_1}_b(y^1),y^1)\label{sequence:sequence_in_final_5}.
\end{gather}

For all $1\leq i \leq r$, the vertex $v_i\in P_1$ and the symbol $x^1_{v_i}=a$. Thus by Statement (\ref{item:nearby_not_a}) and (\ref{item:nearby_is_a}) in this proof we get that
$$(\theta^{v_1, v_2,\ldots, v_i}_{b,b,\ldots,b}(x^1),(\theta^{v_1, v_2,\ldots, v_{i+1}}_{b,b,\ldots,b}(x^1))$$
is $V^\prime$-good. Thus all adjacent pairs in the Sequence \ref{sequence:sequence_in_final_1} are $V^\prime$-good

Notice that $(\theta^{v_1, v_2, \ldots, v_r}_{b,b,\ldots, b}(x^1))_v\neq a$ for all $v\in P_1$ and hence $(\theta^{v_1, v_2,\ldots, v_{r+i}}_{b,b,\ldots,b}(x^1))_v\neq a$ for all $1\leq i \leq k$ and $v\in P_1$. Now consider an adjacent pair in the Sequence \ref{sequence:sequence_in_final_2},
$$(\theta^{v_1, v_2,\ldots, v_{r+i}}_{b,b,\ldots,b}(x^1),\theta^{v_1, v_2,\ldots, v_{r+i+1}}_{b,b,\ldots,b}(x^1))$$
for some $0\leq i \leq k-1$. Since $v_{r+i+1}\in P_2$ , $(\theta^{v_1, v_2,\ldots, v_{r+i}}_{b,b,\ldots,b}(x^1))_w\neq a$ for all $w\sim v_{r+i+1}$. But $(\theta^{v_1, v_2,\ldots, v_{r+i}}_{b,b,\ldots,b}(x^1))_{v_{r+i+1}}=a$, therefore by Statement (\ref{item:nearby_not_a}) we get that
$$(\theta^{v_1, v_2,\ldots, v_{r+i}}_{b,b,\ldots,b}(x^1),\theta^{v_1, v_2,\ldots, v_{r+i+1}}_{b,b,\ldots,b}(x^1))$$
is $V^\prime$-good.

Notice that $(\theta^{v_1, v_2, \ldots, v_{r+k}}_{b,b,\ldots, b}(x^1)),(\theta^{w_1, w_2, \ldots, w_{r^\p+k^\p}}_{b,b,\ldots, b}(y^1))\in X_a$. Thus by Conclusion (\ref{item:from_X_a}) in Lemma \ref{lemma:construction_of_V^prime}, we get that the Pair \ref{sequence:sequence_in_final_3} is $V^\prime$-good.

The proof that the adjacent pairs listed in Sequences \ref{sequence:sequence_in_final_4} and \ref{sequence:sequence_in_final_5} are $V^\prime$-good is identical to the proof for the Sequences \ref{sequence:sequence_in_final_2} and \ref{sequence:sequence_in_final_1} with an additional use of the symmetry of the relation $V^\prime$-good.

Thus all adjacent pairs in Sequences \ref{sequence:sequence_in_final_1}-\ref{sequence:sequence_in_final_5} are $V^\prime$-good. By Corollary \ref{corollary:chain_of_V-goodness} we get that $(x^1,y^1)$ is $V^\prime$-good. But $(x^1,y^1)$ is Markov-similar to $(x,y)$. By Proposition \ref{prop:V-goodness_equivalence_markov_similar} we have that $(x,y)$ is $V^\prime$-good. This completes the proof.
\end{proof}

If $\G$ is finite then Theorem \ref{thm:G-invariant main theorem} follows immediately from Theorem \ref{thm:main theorem}: if $V^\prime$ is a nearest neighbour interaction for a $Gr$-invariant Markov cocycle $M$ then 
\begin{equation*}
\frac{\sum_{g\in Gr} gV^\prime}{|Gr|}
\end{equation*}
is a $Gr$-invariant nearest neighbour interaction for $M$. We will prove the following result which along with Proposition \ref{proposition:induced_map_cocycles} immediately implies Theorem \ref{thm:G-invariant main theorem}.

\begin{thm}\label{thm:extension of a Gibbs cocycle}
Let $\G=(\V, \E1)$ be a bipartite graph and $\A$ be a finite alphabet. Let $Gr\subset Aut(\G)$ be a subgroup, $X\subset \A^\V$ be a $Gr$-invariant n.n.constraint space and $X_a$ be a strong config-fold of $X$ for some $a\in \A$. Suppose $M\in \M^{Gr}_{X}$ is a $Gr$-invariant Markov cocycle. Then $M\in \Gi^{Gr}_{X}$ if and only if $M|_{\Delta_{X_a}}\in \Gi^{Gr}_{X_a}$.
\end{thm}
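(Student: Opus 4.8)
The plan is to prove the two implications separately, reducing the substantive one to a $G$-equivariant rerun of the machinery behind Theorem~\ref{thm:main theorem}.

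The forward implication is immediate and is essentially contained in Proposition~\ref{proposition:induced_map_cocycles}: if $M\in\Gi^{G}_{X}$ with $G$-invariant nearest neighbour interaction $V^\prime$, then $V^\prime|_{\B(X_a)}$ is a $G$-invariant nearest neighbour interaction, and since every clique of $\G$ on which $x,y\in X_a$ are evaluated carries only $X_a$-patterns, it realises $M|_{\Delta_{X_a}}$; hence $M|_{\Delta_{X_a}}\in\Gi^{G}_{X_a}$. This is just the inclusion $F(\Gi^{G}_{X})\subset\Gi^{G}_{X_a}$.

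For the reverse implication, assume $M\in\M^{G}_{X}$ and that $V$ is a $G$-invariant nearest neighbour interaction realising $M|_{\Delta_{X_a}}$. I would rerun the proof of Theorem~\ref{thm:main theorem}, that is, Lemmas~\ref{lemma:four special configurations}, \ref{lemma:construction_of_V^prime} and \ref{lemma:from_particular_pairs_to_all_pairs}, keeping every choice compatible with the $G$-action. The only genuine choice in that proof is the selection of the special configurations $x^{v}$ ($v\in\V_1\cup\V_2$) in Lemma~\ref{lemma:four special configurations}; everything afterwards is canonical in $M$ and $V$. So the plan splits into: (i) produce a \emph{$G$-equivariant} family of special configurations, i.e. configurations $x^{v}$ satisfying the conclusions of Lemma~\ref{lemma:four special configurations} with, in addition, $x^{gv}=g\,x^{v}$ for all $g\in G$ and $v\in\V_1\cup\V_2$; and (ii) feed this family into Lemma~\ref{lemma:construction_of_V^prime} and check that the resulting $V^\prime$ is automatically $G$-invariant. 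Part (ii) is short: $\V_1$ and $\V_2$ are $G$-invariant because $X$ is $G$-invariant and its edge constraints are $G$-equivariant; since $M,V$ are $G$-invariant and $x^{gv}=g\,x^{v}$, the interaction $gV^\prime$ satisfies exactly the normalisation equations \ref{equation:listinteract1}--\ref{equation:listinteract3} and makes $V^\prime$-good exactly the four families of pairs listed in Lemma~\ref{lemma:construction_of_V^prime} (each such family is $G$-invariant once the $x^{v}$ are equivariant, and $V^\prime$-goodness is $G$-equivariant because $M$ is $G$-invariant); by the uniqueness clause of Lemma~\ref{lemma:construction_of_V^prime}, $gV^\prime=V^\prime$. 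Then Lemma~\ref{lemma:from_particular_pairs_to_all_pairs} applies verbatim, giving $M\in\Gi_{X}$ and hence $M\in\Gi^{G}_{X}$. When $\G$ is finite all of this is unnecessary — one simply averages any nearest neighbour interaction for $M$ over the finite group $G$ — so the real content is in part (i) in the infinite locally finite case.

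The hard part is therefore part (i). Fixing a set of representatives for the $G$-orbits on $\V_1\cup\V_2$ and declaring $x^{gv}:=g\,x^{v}$, one reduces to: for each representative $v$, choose $x^{v}$ satisfying Lemma~\ref{lemma:four special configurations} and invariant under the stabiliser $\mathrm{Stab}_{G}(v)$ — this invariance being precisely what makes $x^{gv}:=g\,x^{v}$ well defined. The obstruction here is genuine in general, since a closed invariant subset of a compact space need not have a fixed point, so one must use the specific structure: $\mathrm{Stab}_{G}(v)$ fixes $v$, hence preserves every finite distance sphere $\{u:d_\G(v,u)=n\}$ and so acts on $\A^{D_n(v)}$ through a finite group, and the constraints of $X$ are automorphism-equivariant. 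The near-$v$ pattern $\alpha^{v}$ supplied by Lemma~\ref{lemma:four special configurations} (the pattern $a$ at $v$, $b$ around it, for $v\in\V_1$, and its analogue for $v\in\V_2$) is already $\mathrm{Stab}_{G}(v)$-invariant, so it remains to extend it to a $\mathrm{Stab}_{G}(v)$-invariant element of $X$ in which $a$ occurs only at $v$. The natural attempt is to build this sphere by sphere, setting the value on each stabiliser-orbit in the next sphere equal to the value, at an orbit representative, of a fixed valid configuration (equivariance of the constraints makes this consistent with what has already been built); the delicate point — transparent when $\G$ is a tree but requiring a more careful compactness argument in general — is verifying that such a partial configuration remains extendable. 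This extendability-while-equivariant step is where I expect the main difficulty to lie.
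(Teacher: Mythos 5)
Your reduction of the converse to step (i) — producing special configurations with $x^{gv}=g\,x^{v}$, hence a $\mathrm{Stab}_G(v)$-invariant $x^{v}$ at each orbit representative — is exactly where the proposal breaks down, and you say so yourself: the ``extendability-while-equivariant'' step is left open. That step is a genuine obstruction, not a technicality: the set of configurations in $X$ extending the local pattern $\alpha^{v}$ and containing $a$ only at $v$ is a closed $\mathrm{Stab}_G(v)$-invariant set, but nothing forces it to contain a $\mathrm{Stab}_G(v)$-fixed point, and your sphere-by-sphere scheme does not repair this (choosing values on a stabiliser-orbit by copying a reference configuration at one representative can produce a partial pattern that is no longer in $\B(X)$, or one that is locally legal but not extendable; a compactness argument would need, for every $n$, an invariant extendable pattern on $D_n(v)$, which is precisely the unproven claim). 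So as written the proof of the substantive implication is incomplete, and the statement you are trying to prove in (i) is stronger than the theorem requires and may simply be false for some $G$-invariant n.n.constraint spaces.

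The paper avoids this entirely: in Lemma \ref{lemma:G_invariant_four_special_configurations} the configurations $x^{v}$ are only required to satisfy $(gx^{v})|_{gD_2(v)}=x^{gv}|_{gD_2(v)}$, i.e.\ equivariance of the restrictions to $2$-balls, with no compatibility at all demanded outside $D_1(v)$. This weaker property is easy to arrange: for $v\in\V_1$ the pattern on $D_2(v)$ is canonical ($a$ at $v$, $b$ on $D_2(v)\setminus\{v\}$), and for $v\in\V_2$ one merely chooses the neighbour symbols $c_{v,w}$ constant on $G$-orbits of edges (possible since $\B_{\{v,w\}}(X)=\B_{\{gv,gw\}}(X)$ up to the action), then extends arbitrarily beyond $\partial D_1(v)$ using equation \ref{equation:foldingdefn3}. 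Local equivariance suffices because the construction of $V^\prime$ in Lemma \ref{lemma:construction_of_V^prime} — the normalisation \ref{equation:listinteract3} and the formulas \ref{equation:listinteract4}--\ref{equation:listinteract6} — reads $x^{v}$ only on $D_2(v)$ (the cocycle terms such as $M(\theta^v_b(x^v),x^v)$ depend only on $x^v|_{D_2(v)}$ by the Markov property), so $gV^\prime$ satisfies the same defining conditions as $V^\prime$ and the uniqueness clause gives $gV^\prime=V^\prime$. Note also that the paper does not need the four listed families of pairs to be $G$-invariant as families (your part (ii) implicitly assumes this via global equivariance): it first applies Lemma \ref{lemma:from_particular_pairs_to_all_pairs} to conclude that \emph{all} asymptotic pairs are $V^\prime$-good, hence all are $gV^\prime$-good since $gM=M$, and only then invokes uniqueness. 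If you replace your step (i) by this local-equivariance statement, the rest of your outline goes through.
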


\begin{proof} By Proposition \ref{proposition:induced_map_cocycles}, $M\in \Gi^{Gr}_{X}$ implies $M|_{\Delta_{X_a}}\in \Gi^{Gr}_{X_a}$. We will prove the converse. Let $M\in \M^{Gr}_X$ such that $M|_{\Delta_{X_a}}\in \Gi^{Gr}_{X_a}$. Let $V$ be a $Gr$-invariant nearest neighbour interaction for $M|_{\Delta_{X_a}}$.

Mimicking the proof of Lemma \ref{lemma:four special configurations} we will now obtain special configurations $x^v$ in a $Gr$-invariant way.

\begin{lemma}\label{lemma:G_invariant_four_special_configurations}
Let $\G=(\V, \E1)$ be a bipartite graph and $\A$ be a finite alphabet. Let $Gr\subset Aut(\G)$ be a subgroup, $X\subset \A^\V$ be a $Gr$-invariant n.n.constraint space on $\G$ and $X_a$ be a strong config-fold of $X$ for some $a\in \A$. Let
$$\V_1:=\{v\in \V\:|\:\text{ there exists }w\sim v\text{ such that }\l a,a\r_{\{v,w\}}\in \B_{\{v,w\}}(X)\}$$
and
$$\V_2:=\{v\in \V\setminus \V_1\:|\:\l a\r_{v}\in \B_{\{v\}}(X)\}.$$
Then $\V_1$ and $\V_2$ are invariant under the action of $Gr$. Moreover for all $v\in \V_1\cup \V_2$ there exists $x^v\in X$ satisfying the conclusions of Lemma \ref{lemma:four special configurations} such that $(gx^v)|_{gD_2(v)}=x^{gv}|_{gD_2(v)}$ for all $g\in Gr$.
\end{lemma}

\begin{proof}
Since $X$ is $Gr$-invariant it follows that the sets $\V_1$ and $\V_2$ are $Gr$-invariant.  Let the symbol $a$ be strongly config-folded into the symbol $b$.

Let $v\in \V_1$ and $g\in Gr$. Then by Lemma \ref{lemma:four special configurations} there exists $x^v, x^{gv}\in X$ such that $x^v_v=x^{gv}_{gv}=a$ and $x^v_u=x^{gv}_{gu}=b$ for all $u \in D_2(v)\setminus \{v\}$. Thus we find that $(gx^v)|_{gD_2(v)}=x^{gv}|_{gD_2(v)}$.

Let $v\in \V_2$. Then for all $w\sim v$, $g\in Gr$ and $c\in\A\setminus\{a\}$ the pattern $\l a,c\r_{\{v,w\}}\in \B_{\{v,w\}}(X)$ if and only if $\l a,c_{v,w}\r_{\{gv,gw\}}\in \B_{\{gv,gw\}}(X)$. Thus for all $w\sim v$ we can choose $c_{v,w}\in \A\setminus\{a\}$ such that $\l a,c_{v,w}\r_{\{v,w\}}\in \B_{\{v,w\}}(X)$ and $c_{v,w}= c_{gv,gw}$ for all $g\in Gr$. Note that since $v\in \V_2$ we know that $c_{v,w}\neq a$.

By \eqref{equation:foldingdefn3} there exists $x^{1,v}\in X$ such that $x^{1,v}|_{\partial D_1(v)}=b^{\partial D_1(v)}$. Since $a$ can be strongly config-folded into the symbol $b$ we can assume that $x^{1,v}\in X_a$. Consider $x^v\in \A^\V$ defined by
\begin{equation*}
x^v_u:=\begin{cases}
a&\text{ if }u=v\\
c_{v,u}&\text{ if } u\sim v\\
x^{1,v}_u&\text{ if } u\in D_1(v)^c.
\end{cases}•
\end{equation*}•
Note that $a$ appears in $x^{v}$ only at the vertex $v$. Any edge $(u_1, u_2)$ in $\G$ lies either completely in $D_2(v)$ or in $D_1(v)^c$. If the edge lies in $D_1(v)^c$ then $x^v|_{\{u_1, u_2\}}= x^{1,v}|_{\{u_1, u_2\}}\in \B_{\{u_1, u_2\}}(X)$. If the edge is of the form $(v,w)$ then $x^v|_{\{v, w\}}= \l a,c_{v,w}\r_{\{v,w\}}\in \B_{\{v, w\}}(X)$. If the edge is of the form $(w, w^\prime)$ where $w\in \partial \{v\}$ and $w^\prime\in \partial D_1(v)$ then $x^v|_{\{w, w^\prime\}}= \l c_{v,w},b\r_{\{w,w^\p\}}$. Since $(v,w)$ and $(w, w^\prime)$ are edges in the graph $\G$ and $\l a,c\r_{\{v,w\}}\in \B_{\{v,w\}}(X)$ by \eqref{equation:foldingdefn2} we know that $\l c_{v,w},b\r_{\{w,w^\p\}}\in \B_{\{w,w^\p\}}(X)$.

Thus we have proved for every edge $(u_1,u_2)$ in $\G$ that $x^v|_{\{u_1,u_2\}}\in \B_{\{u_1,u_2\}}(X)$. Since $X$ is an n.n.constraint space we get that $x^v\in X$.

Moreover for all $v\in \V_2$ and $g\in Gr$
\begin{equation*}
(gx^v)_{u}=\begin{cases}
a& \text{ if }u=gv\\
c_{v,g^{-1}u}&\text{ if } u\sim gv\\
b&\text{ if } u\in \partial D_1(gv)
\end{cases}•
\end{equation*}•
and
\begin{equation*}
(x^{gv})_{u}=\begin{cases}
a&\text{ if }u=gv\\
c_{gv,u}=c_{v,g^{-1}u}&\text{ if } u\sim gv\\
b&\text{ if } u\in \partial D_1(gv),
\end{cases}•
\end{equation*}•
that is, $(gx^v)|_{gD_2(v)}=x^{gv}|_{gD_2(v)}$.

Thus the configurations $x^v$ satisfy the Conclusions (\ref{enumerate:configuration1}) and (\ref{enumerate:configuration2}) of Lemma \ref{lemma:four special configurations} and $(gx^v)|_{gD_2(v)}=x^{gv}|_{gD_2(v)}$ for all $g\in Gr$ and $v\in \V_1\cup\V_2$. The rest follows exactly as in the proof of Lemma \ref{lemma:four special configurations}.
\end{proof}
Consider sets $\V_1, \V_2\subset \V$ and for all $v\in \V$ configurations $x^v\in X$ as obtained by Lemma \ref{lemma:G_invariant_four_special_configurations}. Then by Lemma \ref{lemma:construction_of_V^prime} there exists a unique nearest neighbour interaction $V^\prime$ on $X$ such that the Pairs (\ref{item:from_X_a}), (\ref{item:changing_b_at_v}), (\ref{item:changing_at_boundary_no_a}) and (\ref{item:changing_at_boundary_a}) listed in Lemma \ref{lemma:construction_of_V^prime} are $V^\prime$-good. By Lemma \ref{lemma:from_particular_pairs_to_all_pairs} we get that $V^\prime$ is a nearest neighbour interaction for $M$. We will prove that the interaction $V^\prime$ is $Gr$-invariant. For this we will invoke the uniqueness of the interaction satisfying the conclusions of Lemma \ref{lemma:construction_of_V^prime}.

Let $g\in Gr$. $gV^\prime$ is a nearest neighbour interaction corresponding to $gM=M$. Thus the pairs listed in (\ref{item:from_X_a}), (\ref{item:changing_b_at_v}), (\ref{item:changing_at_boundary_no_a}) and (\ref{item:changing_at_boundary_a}) in Lemma \ref{lemma:construction_of_V^prime} are $gV^\prime$-good. Since $V$ is $Gr$-invariant, $gV^\prime|_{\B_{X_a}}=gV=V$. Hence $gV^\prime$ satisfies \eqref{equation:listinteract1} and \eqref{equation:listinteract2}.

If $v\in \V_1\cup \V_2$ then we know from Lemma \ref{lemma:G_invariant_four_special_configurations} that $(gx^v)|_{gD_1(v)}=x^{gv}|_{gD_1(v)}$. Since $V^\prime$ satisfies \eqref{equation:listinteract3}, for $w\sim v$ we get that
\begin{eqnarray*}
gV^\prime(x^v|_{\{v,w\}})=V^\prime(\l x^v_{v}, x^v_w\r_{\{g^{-1}v,g^{-1}w\}})=V^\prime(\l x^{g^{-1}v}_{g^{-1}v}, x^{g^{-1}v}_{g^{-1}w}\r_{\{g^{-1}v,g^{-1}w\}})=0.
\end{eqnarray*}
Thus the interaction $gV^\prime$ satisfies \eqref{equation:listinteract3}. Therefore the interaction $gV^\prime$ is a nearest neighbour interaction which satisfies \eqref{equation:listinteract1}, \eqref{equation:listinteract2} and \eqref{equation:listinteract3} such that the pairs listed in (\ref{item:from_X_a}), (\ref{item:changing_b_at_v}), (\ref{item:changing_at_boundary_no_a}) and (\ref{item:changing_at_boundary_a}) in Lemma \ref{lemma:construction_of_V^prime} are $gV^\prime$-good. By Lemma \ref{lemma:construction_of_V^prime} we know that such an interaction is unique. Thus $gV^\prime=V^\prime$ and $M\in \Gi^{Gr}_X$.

\end{proof}

\begin{corollary}\label{corollary:folds_quotient_isomorphic}
Let $\G=(\V, \E1)$ be a bipartite graph and $\A$ a finite alphabet. Let $Gr\subset Aut(\G)$ be a subgroup and $X\subset \A^\V$ be a $Gr$-invariant n.n.constraint space, $X_a$ be a strong config-fold of $X$ for some $a\in \A$. Then $\M^{Gr}_X/\Gi^{Gr}_X$ is isomorphic to $\M^{Gr}_{X_a}/\Gi^{Gr}_{X_a}$.
\end{corollary}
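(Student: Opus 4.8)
The plan is to realise the asserted isomorphism as the map induced on quotients by the restriction map
$F:\M^G_X\longrightarrow\M^G_{X_a}$, $F(M)=M|_{\Delta_{X_a}}$, which has already been studied in Proposition \ref{proposition:induced_map_cocycles}. First I would collect the two inputs I need. From Proposition \ref{proposition:induced_map_cocycles}, $F$ is a surjective linear map with $F(\Gi^G_X)=\Gi^G_{X_a}$. From Theorem \ref{thm:extension of a Gibbs cocycle}, a cocycle $M\in\M^G_X$ lies in $\Gi^G_X$ if and only if $M|_{\Delta_{X_a}}=F(M)$ lies in $\Gi^G_{X_a}$; equivalently, $F^{-1}(\Gi^G_{X_a})=\Gi^G_X$ as subspaces of $\M^G_X$ (the inclusion $\supseteq$ being the easy half of Proposition \ref{proposition:induced_map_cocycles}, the inclusion $\subseteq$ being exactly the nontrivial direction of Theorem \ref{thm:extension of a Gibbs cocycle}).

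Next I would define $\bar F:\M^G_X/\Gi^G_X\longrightarrow\M^G_{X_a}/\Gi^G_{X_a}$ by $\bar F(M+\Gi^G_X)=F(M)+\Gi^G_{X_a}$ and check that this is a well-defined linear map: if $M-M'\in\Gi^G_X$, then $F(M)-F(M')=F(M-M')\in F(\Gi^G_X)=\Gi^G_{X_a}$, so the value is independent of the chosen representative, and linearity is inherited from that of $F$ together with the linearity of the two quotient projections.

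Then I would verify that $\bar F$ is bijective. Surjectivity is immediate: $F$ is surjective and the projection $\M^G_{X_a}\to\M^G_{X_a}/\Gi^G_{X_a}$ is surjective, and their composite factors through $\bar F$. For injectivity, suppose $\bar F(M+\Gi^G_X)=0$, i.e. $F(M)\in\Gi^G_{X_a}$; by Theorem \ref{thm:extension of a Gibbs cocycle} this forces $M\in\Gi^G_X$, so $M+\Gi^G_X=0$. Hence $\bar F$ is a linear isomorphism, which is precisely the conclusion of Corollary \ref{corollary:folds_quotient_isomorphic}.

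I do not expect any genuine obstacle here: the statement is the routine fact that a surjective linear map $F$ with $F(U)=U'$ and $F^{-1}(U')=U$ descends to an isomorphism $\M^G_X/U\cong\M^G_{X_a}/U'$, and all the substance has been absorbed into Proposition \ref{proposition:induced_map_cocycles} and Theorem \ref{thm:extension of a Gibbs cocycle}. The only point meriting a little care is not to forget Theorem \ref{thm:extension of a Gibbs cocycle} when proving injectivity of $\bar F$: without it one would only know $F(\Gi^G_X)=\Gi^G_{X_a}$, which yields surjectivity of $\bar F$ but not injectivity, so the ``hard part'' is really that one must already have Theorem \ref{thm:extension of a Gibbs cocycle} in hand.
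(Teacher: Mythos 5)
Your proposal is correct and is essentially the paper's own argument: both use the restriction map $F$ from Proposition \ref{proposition:induced_map_cocycles} together with Theorem \ref{thm:extension of a Gibbs cocycle} to get $F^{-1}(\Gi^G_{X_a})=\Gi^G_X$, and then pass to quotients (the paper invokes the second isomorphism theorem for vector spaces where you verify well-definedness, surjectivity and injectivity by hand). No gaps.
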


Clearly this corollary subsumes Theorem \ref{thm:G-invariant main theorem}. Thereby to understand the difference between Markov and Gibbs cocycles it is sufficient to study the cocycles over configuration spaces which cannot be strongly config-folded any further.

Also this corollary is most relevant when the dimension of the quotient space $\M^{Gr}_{X}/\Gi^{Gr}_{X}$ is finite. This holds in the following two situations:
\begin{enumerate}
\item
The underlying graph $\G$ is finite.
\item
The underlying graph $\G$ is $\Z^d$ for some dimension $d$, $Gr$ is the group of translations on $\Z^d$ and the space $X$ has the generalised pivot property (defined in \cite[Section 3]{chandgotiameyerovitch}.
\end{enumerate}•

\begin{proof}
By Proposition \ref{proposition:induced_map_cocycles} the map $F:\M^{Gr}_X\longrightarrow \M^{Gr}_{X_a}$ given by
$$F(M):= M|_{\Delta_{X_a}}\text{ for all }M\in \M^{Gr}_{X_a}$$
is surjective. By Theorem \ref{thm:extension of a Gibbs cocycle} we know that for a Markov cocycle $M\in \M^{Gr}_{X}$, $M\in \Gi^{Gr}_{X}$ if and only if $M|_{\Delta_{X_a}}\in \Gi^{Gr}_{X_a}$. Thus $F^{-1}(\Gi^{Gr}_{X_a})=\Gi^{Gr}_{X}$.

Via the second isomorphism theorem for vector spaces the map
$$\tilde{F}:\M^{Gr}_X/F^{-1}(\Gi^{Gr}_{X_a})\longrightarrow \M^{Gr}_{X_a}/\Gi^{Gr}_{X_a}$$
given by
$$\tilde{F}(M\!\!\!\mod F^{-1}(\Gi^{Gr}_{X_a})):=F(M)\!\!\!\mod\Gi^{Gr}_{X_a} $$
is an isomorphism. Since $F^{-1}(\Gi^{Gr}_{X_a})=\Gi^{Gr}_{X}$ the proof is complete.
\end{proof}

\section{Further Directions}\label{section:conclude}
\begin{enumerate}
\item
By Theorems \ref{thm:main theorem} and \ref{thm:G-invariant main theorem} we have generalised the Hammersley-Clifford Theorem, but only when the graph $\G$ is bipartite. Can this be generalised further beyond the bipartite case? Note that $\G$ being bipartite is used at many critical parts of the proof e.g. the construction of the configurations $x^v$ in Lemma \ref{lemma:four special configurations}, construction of the interaction in Lemma \ref{lemma:construction_of_V^prime} etc.

\item

Suppose a finite graph $\H$ can be folded into a single vertex (with or without a self-loop) or an edge. We have proved that for any bipartite graph $\G$ the space $Hom(\G, \H)$ is Hammersley-Clifford. Also we have shown that being Hammersley-Clifford is invariant under foldings and unfoldings of $\H$. Following \cite{brightwell2000gibbs} we will call a graph $\H$ \emph{stiff} if it cannot be folded anymore. Fixing a particular domain graph say $\Z^2$, is it possible to classify all stiff graphs $\H$ for which $Hom(\Z^2,\H)$ is Hammersley-Clifford? What if we want $Hom(\G, \H)$ to be Hammersley-Clifford for all bipartite graphs $\G$?

\item

Suppose $\G$ is a finite bipartite graph. Then the space of cocycles $Hom(\G, \H)$ is finite dimensional for all finite graphs $\H$. Is there an efficient algorithm to determine the dimension of the space of cocycles $\M_{Hom(\G, \H)}$ and $\Gi_{Hom(\G, \H)}$?

In another direction, is there a graph $\H$ for which the space of $\sigma$-invariant Markov cocycles $\M^\sigma_{Hom(\Z^d, \H)}$ is infinite dimensional? Is there an algorithm to determine the dimension of $\M^\sigma_{Hom(\Z^d, \H)}$ and $\Gi^\sigma_{Hom(\Z^d, \H)}$? We know from \cite{chandgotiameyerovitch} that if $Hom(\Z^d, \H)$ has the pivot property then the dimension of  $\M^\sigma_{Hom(\Z^d, \H)}$ is finite, however we do not know the corresponding dimension beyond a few specific cases \cite[Section 5]{chandgotiameyerovitch}.
\end{enumerate}•

\section{Acknowledgments}
I would like to thank Professor Tom Meyerovitch for suggesting the problem and Professor Ronnie Pavlov for suggesting the possible extension of the result to the case when the underlying graph $\G$ is bipartite. I would also like to thank Professor Brian Marcus for many discussions and for wading through many versions of this paper and Professor Peter Winkler for hosting me and discussing various aspects of this result. I appreciate the suggestions given by the referee which largely helped improve the quality of the paper. The overarching influence of AA and in particular Quatro Amigos was extremely stimulating. This research was partly funded by the Four-Year Fellowship at the University of British Columbia.

\bibliographystyle{plain}
\bibliography{GEhammersley}
\end{document}